\theoremstyle{plain}
\newtheorem{theorem}{Theorem}[section]
\newtheorem{lemma}{Lemma}[section]
\newtheorem{proposition}{Proposition}[section]
\theoremstyle{definition}
\theoremstyle{remark}
\newtheorem{remark}{Remark}[section]
\theoremstyle{example}
\theoremstyle{corollary}
\newtheorem{corollary}{Corollary}[section]
\numberwithin{equation}{section}
\renewcommand{\div}{\textnormal{div }}
\newcommand{\R}{{\mathbb R}}
\begin{document}
\title[The Navier-Stokes Equations with Vorticity Boundary Condition]{The Navier-Stokes
Equations with\\ the Kinematic and Vorticity Boundary Conditions\\
on Non-Flat Boundaries}
\author{Gui-Qiang Chen \and Dan Osborne \and Zhongmin Qian}
\address{G.-Q. Chen,  School of Mathematical Sciences, Fudan University,
 Shanghai 200433, China; Department of Mathematics, Northwestern University,
         Evanston, IL 60208-2730, USA}
\email{\tt gqchen@math.northwestern.edu}
\address{D. Osborne, Mathematical Institute, University of Oxford, 24-29 St Giles, Oxford OX1 3LB, UK}
\email{\tt dan.osborne@st-annes.ox.ac.uk}
\address{Z. Qian, Mathematical Institute, University of Oxford, 24-29 St Giles, Oxford OX1 3LB, UK}
\email{\tt qianz@maths.ox.ac.uk}

\dedicatory{To Professor Wenjun Wu with Affection and Admiration}


\keywords{Navier-Stokes equations, incompressible, vorticity
boundary condition, kinematic boundary condition, absolute boundary
condition, non-flat boundary, general domain, Stokes operator,
Neumann problem, Poisson equations, vorticity, strong solutions,
inviscid limit, slip boundary condition} \subjclass[2000]{Primary:
35Q30,76D05,76D03,35A35,35B35; Secondary: 35P05,76D07,36M60,58C35}
\date{\today}
%

\begin{abstract}
We study the initial-boundary value problem of the Navier-Stokes
equations for incompressible fluids in a general domain in $\R^n$
with compact and smooth boundary, subject to the kinematic and
vorticity boundary conditions on the non-flat boundary. We observe
that, under the nonhomogeneous boundary conditions, the pressure $p$
can be still recovered by solving the Neumann problem for the
Poisson equation. Then we establish the well-posedness of the
unsteady Stokes equations and employ the solution to reduce our
initial-boundary value problem into an initial-boundary value
problem with absolute boundary conditions. Based on this, we first
establish the well-posedness for an appropriate local linearized
problem with the absolute boundary conditions and the initial
condition (without the incompressibility condition), which
establishes a velocity mapping. Then we develop \emph{apriori}
estimates for the velocity mapping, especially involving the Sobolev
norm for the time-derivative of the mapping to deal with the
complicated boundary conditions, which leads to the existence of the
fixed point of the mapping and the existence of solutions to our
initial-boundary value problem. Finally, we establish that, when the
viscosity coefficient tends zero, the strong solutions of the
initial-boundary value problem in $\R^n (n\ge 3)$ with
nonhomogeneous vorticity boundary condition converges in $L^2$ to
the corresponding Euler equations satisfying the kinematic
condition.
\end{abstract}
\maketitle

\section{Introduction}

The motion of an incompressible viscous fluid
in $\R^n, n\ge 2$, is described by the Navier-Stokes equations:
\begin{eqnarray}
&&\partial_t u+ u\cdot\nabla u +\nabla p=\mu \Delta u, \label{1.1}\\
&&\nabla\cdot u = 0, \label{1.2}
\end{eqnarray}
with initial data
\begin{equation}
u|_{t=0}=u_{0}(x), \label{1.3}
\end{equation}
where $u(t,x)=(u^{1}, \cdots, u^{n})(t,x)$ is the velocity vector
field and $p(t,x)$ is the pressure that maintains the
incompressibility of a fluid
at $(t,x)$. Equation \eqref{1.2}, i.e. $\div u=0$, is the
incompressibility condition. As a nonlinear system of partial
differential equations, $u$ and $p$ are regarded as unknown
functions, and the initial velocity field $u_{0}(x)$
sets the fluid in motion. The constant $\mu>0$ is the kinematic
viscosity constant, $u\cdot\nabla$ denotes the covariant derivative
along the flow trajectories, namely, the directional derivative in
the direction $u$, $\Delta {u}$ is the usual Laplacian on $u$, and
$\mu \Delta u$ represents the stress applied to the fluid. As usual,
we use $\nabla\cdot=\div$ to denote the divergence operator.

For inviscid flow, $\mu=0$ and then the equations are referred to as
the Euler equations for incompressible fluid flow:
\begin{eqnarray}
&&\partial_t u+ u\cdot\nabla u +\nabla p=0, \label{1.4}\\
&&\nabla\cdot u = 0. \label{1.5}
\end{eqnarray}

When a fluid is confined in a bounded domain
$\Omega\subset\mathbb{R}^{n}$ with non-empty boundary $\Gamma$,
these equations must be supplied with proper boundary conditions in
order to be well-posed. For concreteness, the bounded domain
$\Omega\subset\R^n$ is assumed to have a compact, oriented, smooth
surface boundary $\Gamma$.
In this paper, we propose and study the initial-boundary value
problem \eqref{1.1}--\eqref{1.3} with the following boundary
conditions on the non-flat boundary $\Gamma$:
\begin{eqnarray}
&&\left. u^\bot\right|_{\Gamma}=0  \qquad \text{(kinematic
condition)},
\label{kinematic-1}\\
&& \left. \omega^{\Vert}\right|_{\Gamma}=a \qquad \text{(vorticity
condition)}, \label{vorticity-1}
\end{eqnarray}
where the field $a=a(t,x)$ is defined on the non-flat boundary
$\Gamma$, $u^\bot\in \R$ denotes the normal component and $u^\Vert$
the tangential part of $u\in \R^n$.

Various physical considerations for some flows observed in the
nature have led to the no-slip condition:
\begin{equation}\label{no-slip-1}
\left. u^\Vert\right|_{\Gamma}=0,
\end{equation}
together with the kinematic condition \eqref{kinematic-1}. This is
equivalent to the simple homogeneous Dirichlet boundary condition:
$u|_{\Gamma}=0$.
Thus, it has been received an intensive investigation for the
Navier-Stokes equations.

\medskip
On the other hand, if a vector field $u\in\R^n$ is identified with
its corresponding differential $1$-form on $\Omega$, then various
boundary problems for the linearized equations, the heat equations,
have been studied in geometry under the Hodge theory on manifolds
with boundaries; see Conner \cite{conner1956} and the references
cited therein. In particular, three natural boundary conditions have
been identified; since
\begin{equation*}
\Delta u=-\nabla \times \omega + \nabla (\div u)
\qquad\,\,\text{on}\,\,\Omega\subset\mathbb{R}^{n},
\end{equation*}
where $\omega:=\nabla\times u\in \R^n$ is the vorticity, boundary
conditions may be imposed on the components ${u}^{\bot}$,
${u}^{\Vert}$, $\omega^{\bot}$, $\omega^{\Vert}$, and $\div u$. It
has been revealed that the initial-boundary value problem with the
boundary conditions \eqref{kinematic-1}--\eqref{vorticity-1} for the
heat equations is well-posed.
This leads to a natural question whether the same setup of
initial-boundary value problem is well-posed for the Navier-Stokes
equations. In the literature, the \emph{absolute boundary
conditions} mean that our boundary conditions
\eqref{kinematic-1}--\eqref{vorticity-1} with $a=0$, i.e.
homogeneous.

The initial-boundary value problem \eqref{1.1}--\eqref{1.3} and
\eqref{kinematic-1}--\eqref{vorticity-1} for the Navier-Stokes
equations has been received less attention, partly due to the common
agreement that the no-slip condition is a physical condition under
ideal situations. However, recent experimental evidence (cf.
Einzel-Panzer-Liu \cite{Einzel-Panzer-Liu}, Lauga-Brenner-Stone
\cite{lauga-etc}, and Zhu-Garnick \cite{zhu-Garnick} for a survey)
demonstrates that the velocity field of a fluid does not satisfy the
complete no-slip conditions in general. Indeed, it is revealed that
the pressure does depend on how curved the surface boundary is; also
see Bellout-Neustpua-Penel \cite{bellout-Neustpua-Penel}. It is also
evident that, for high speed flows, one can not expect to comply the
complete no-slip conditions. Therefore, a careful study of
incompressible fluids with various boundary conditions associated
with the Navier-Stokes equations is in great demand.

Another motivation for our study in this paper is from the Navier
slip boundary condition, which are widely accepted in many
applications and numerical studies. It states that the slip velocity
is proportional to the shear stress:
\begin{equation}\label{Navier-1}
u^\Vert=-\zeta \big((\nabla u+(\nabla u)^\top)\nu\big)^\Vert,
\end{equation}
where $\nu$ is the outward normal on $\Gamma$ and $\zeta>0$ is the
slip length on $\Gamma$. Such boundary conditions can be induced by
effects of free capillary boundaries, a rough boundary,  a
perforated boundary, or an exterior electric field (cf.
\cite{APV,Bansch,BJ,CDE,JM-1,JM-2,QWS,Schwartz}).
Take an asymptotic expansion of $u(t,x)$ with respect to $\zeta$ and
the distance $d(x,\Gamma)$ to $\Gamma$ such that $d(x,\Gamma)\ll
\zeta$. Then, by a direct calculation, one finds that the Navier
slip condition \eqref{Navier-1} yields \eqref{vorticity-1} in which
$a(t,x)$ is determined by the initial value problem of a family of
linear heat equations on $\Gamma$, that is, $a(t,x)$ is determined
solely by the initial data \eqref{1.3}.

\medskip
There is another appealing reason to consider other boundary
problems associated with the Navier-Stokes equations. The Euler
equations \eqref{1.4}--\eqref{1.5}
subject to the kinematic condition \eqref{kinematic-1}, along with
the initial condition \eqref{1.3},
are often regarded as an ideal model in turbulence theory, where the
Reynolds number is so big that the viscosity coefficient $\mu$ is
negligible. Hence, a natural question is whether the flow determined
by the Euler equations is, at best, a ``singular limit'' of the
Navier-Stokes equations
as the viscosity coefficient $\mu \downarrow 0$ (cf. Pope
\cite{pope2000}, page 18). However, it is very subtle and difficult
in general, since physical boundary layers may be present. The
classical no-slip boundary condition, $u|_{\Gamma}=0$, give arise to
the phenomenon of strong boundary layers in general as formally
derived by Prandtl \cite{Prandtl}. For some sufficient conditions to
ensure the convergence of viscous solutions to the ones of the Euler
equations, see Kato \cite{Kato,Wangxm} and the references therein.
However, as far as we know, such a claim has never been proved for
general bounded domains with the Navier boundary condition. In
contrary, we prove in this paper that
our problem \eqref{1.1}--\eqref{1.3} and
\eqref{kinematic-1}--\eqref{vorticity-1}
for the Navier-Stokes equations converges in $L^2$  to a solution of
problem \eqref{1.4}--\eqref{kinematic-1} for the Euler equations as
the viscosity coefficient $\mu \downarrow 0$.
To explain why the Euler equations, subject to the kinematic
condition \eqref{kinematic-1}, is the limiting case of the solutions
$u_{\mu}$ of our problem \eqref{1.1}--\eqref{1.3} and
\eqref{kinematic-1}--\eqref{vorticity-1},
we
notice
that the pressure $p$ required in the Euler equations solves the
Poisson equation:
\begin{equation}
\Delta p=-\nabla\cdot\left(u\cdot\nabla u\right), \qquad \left.
\partial_\nu p\right|_{\Gamma }=\pi (u, u), \label{1.7}
\end{equation}
where $u(t,x)$ is a solution to the Euler equations
\eqref{1.4}--\eqref{1.5}, and $\pi$ is the second fundamental form
of the boundary $\Gamma$ which describes the curvature of the
boundary surface $\Gamma$. Since $u$ does not necessarily comply the
no-slip condition, the normal derivative of the pressure $p$ does
not vanish along the boundary in general. In particular, the
pressure $p$ along the boundary naturally depends on the curvature
of the boundary. On the other hand, under the kinematic condition
\eqref{kinematic-1}
the pressure $p$ can be recovered from the Neumann problem of the
Poisson equation:
\begin{eqnarray}
&&\Delta p=-\nabla \cdot (u_{\mu}\cdot\nabla u_{\mu}), \label{1.9}\\
&&\left. \partial_\nu p\right|_{\Gamma}
  =\pi(u_\mu, u_\mu)-\mu \nabla^{\Gamma}\times\omega_\mu^{\Vert}.
      \label{eq-a2-12-10-07}
\end{eqnarray}
As $\mu\downarrow 0$, the pressure $p$ satisfies the same equation
as that of the pressure required by the Euler equations. However, if
we further enforce the no-slip condition, then the normal derivative
of the pressure $p$ vanishes, which does not in general coincide
with that of the Euler equations. On the other hand, if we set a
free condition on the tangent component of $u$ but replace the
boundary condition on the vorticity, we have the chance to recover
the right boundary condition for the pressure in the Euler
equations.

The rigorous mathematical analysis of the Navier-Stokes equations
involving vorticity-type boundary conditions may date back the work
by Solonnikov-{\v{S}}{\v{c}}adilov \cite{SS} for the stationary
linearized Navier-Stokes system under boundary conditions:
\begin{equation}\label{ss}
\left. u^{\bot}\right| _{\Gamma }=0, \qquad \big((\nabla u+(\nabla
u)^\top)\nu\big)^\Vert|_\Gamma=0.
\end{equation}
For two-dimensional, simply connected, bounded domains, the
vanishing viscosity problem has been rigorously justified by
Yodovich \cite{Yo}. We also refer to Lions \cite{Lions1,Lions2} for
the boundary conditions \eqref{kinematic-1}--\eqref{vorticity-1}
with $a=0$, Clopeau, Mikeli\'{c}, and Robert \cite{CMR} and Lopes
Filho, Nussenzveig Lopes and Planas \cite{LLP} for the Navier
boundary condition \eqref{ss}, and Mucha \cite{Mucha} under some
geometrical constraints on the shape of the domains for the
two-dimensional case. Also see \cite{XX} for an analysis for the
complete slip boundary conditions as \eqref{absolute-1} below.

\medskip
The main purpose of this paper is to develop an approach to deal
with the well-posedness and the inviscid limit for the
initial-boundary value problem \eqref{1.1}--\eqref{1.3} and
\eqref{kinematic-1}--\eqref{vorticity-1} with {\em nonhomogeneous
boundary condition} for the multidimensional Navier-Stokes equations
($n\ge 3$). One of the main difficulties for the higher dimension
case, in comparison with the two-dimensional case, is that the
maximum principle for the vorticity fails so that the
two-dimensional techniques can not be directly extended to this
case. Furthermore, the nonhomogeneous boundary condition causes
another main difficulty in developing apriori estimates which
require to be compatible with the nonlinear convection term. In
Section 2, we introduce the local moving frames on the boundary
$\Gamma$ of a domain $\Omega$ and derive some basic identities for
vector fields.
Although we work with a domain in the Euclidean space, the boundary
$\Gamma$ is a curved surface, so that geometric tools have been
brought in to carry out local computations. Since we can work only
with the boundary coordinate system which may be not ``normal'', the
computations we need to carry out are a little bit complicated and
long. The results in Section 2 allow us to determine boundary values
of several interesting quantities, which will be used to settle the
local existence of a unique strong solution for problem
\eqref{1.1}--\eqref{1.3} and
\eqref{kinematic-1}--\eqref{vorticity-1} for the Navier-Stokes
equations.

In Section 3, we establish various $L^2$-estimate for vector fields
that satisfy the absolute boundary conditions:
\begin{equation}\label{absolute-1}
\left. u^{\bot}\right|_{\Gamma}=0, \qquad \left.
\omega^{\Vert}\right|_{\Gamma }=0,
\end{equation}
In particular, the main novel fact is that, if $u(t,x)$ satisfies
the absolute boundary conditions \eqref{absolute-1}, then
$$
\left. (\nabla \times \omega)^{\bot}\right|_{\Gamma}=0.
$$
The latter information on
\begin{equation*}
\nabla \times \omega=\Delta u- \nabla (\div u)
\end{equation*}
along the boundary is crucial in obtaining the necessary
\emph{apriori} estimates for the linearization of the absolute
boundary problem.

In Sections 4, we develop an old idea of Leray-Hopf
\cite{hopf1,leray2,leray3,leray1} for our initial-boundary value
problem with nonhomogeneous vorticity boundary condition for the
Navier-Stokes equations. Namely, under the nonhomogeneous boundary
conditions, the pressure $p$ may be recovered by solving the Neumann
problem \eqref{1.9}--\eqref{eq-a2-12-10-07} of the Poisson
equations. Then we establish the well-posedness of the unsteady
Stokes equations and employ the solution $w$ to reduce our
initial-boundary value problem into an initial-boundary value
problem with absolute boundary conditions for $v=u-w$. This leads to
the following linearlization: One first considers the linear
parabolic equation:
\begin{equation}
\partial_t v+ (\beta+w)\cdot \nabla (v+w) + \nabla p_{\beta}=\mu \Delta v,
\label{eq09-0013}
\end{equation}
where $p_{\beta}$ solves (\ref{1.7}) replacing $u$ by $\beta+w$ for
a given $\beta=\beta(t,x), t\leq T$, satisfying the absolute
conditions and the initial condition. Based on this, in Section 5,
we demonstrate that the absolute boundary conditions are suitable
boundary conditions, since they generally do not appear in the
literature on parabolic equations,  and conclude that
(\ref{eq09-0013}) is well-posed, which thus establishes a velocity
mapping $V: \, w\rightarrow v$. Note that we have dropped the
incompressibility condition, so that (\ref{eq09-0013}) is local and
a linear parabolic equation. However, we will establish that any
fixed point in a proper functional space is a strong solution to the
Navier-Stokes equations, so that the incompressibility can be
recovered.

We emphasize that the absolute boundary conditions
in a general domain are a kind of boundary conditions which change
from the Dirichlet to the Neumann boundary condition along the
boundary. Therefore, the coercive estimates, or called the global
$L^{p}$-estimates, for parabolic equations are not applicable in
this setting. Indeed, we have to establish estimates quite close to
the coercive estimates for the linear systems (\ref{eq09-0013}) in a
different functional space, which will be done in Section 6. In
order to construct a local (in time) strong solution, we need to
develop \emph{apriori} estimates for the solution $v(t,x)$ of the
linear parabolic equation (\ref{eq09-0013}). It would be enough to
develop an estimate for the $H^{2}$-norm of $v(t,x)$ in terms of
that of $(\beta+w)(t,x)$. In fact, it works for the Dirichlet
boundary problem (which has been done by Solonnikov
\cite{Solonnikov1973}) and also works for the periodic case (cf.
\cite{kreiss-lorenz1}). Unfortunately, due to the complicated
boundary conditions, we are unable to achieve such \emph{apriori}
estimates, instead, we have to involve the Sobolev norm for the
time-derivative of $v(t,x)$.

Finally, in Sections 8--9, we establish that the unique strong
solution of the initial-boundary value problem
\eqref{1.1}--\eqref{1.3} and
\eqref{kinematic-1}--\eqref{vorticity-1} in $\R^n, n\ge 3$, with
nonhomogeneous vorticity boundary condition converges in $L^2$ to
the corresponding Euler equations satisfying the kinematic condition
\eqref{kinematic-1}.

\section{Local Moving Frames on the Boundaries and Basic Identities
for Vector Fields}

Let $\Omega\subset\R^n$ be a bounded domain with a compact, smooth
boundary $\Gamma=\partial\Omega$ with $n\ge 2$. We assume that
$\Gamma$ is an oriented surface (not necessarily connected) which
carries an induced metric. Let $\mathbf{\nu}$ denote the unit normal
vector field along $\Gamma$ pointing outwards. As usual, we use
conventional notation that the repeated indices in a formula are
understood to be summed up from $1$ to $n$ unless confusion may
occur.

In order to reflect better the geometry of the boundary
$\Gamma=\partial\Omega$ of $\Omega$, it is convenient to work with
local moving frames compatible to $\Gamma$ (see Palais-Terng
\cite{Palais-Terng} for the details). More precisely, by a moving
frame compatible to the boundary we mean a local orthonormal basis
$$
\{e_{1},\cdots ,e_{n}\}
$$
of the tangent bundle $T\Omega $ of $\Omega$ about a boundary point
$x\in\Gamma$ such that $e_{n}=\mathbf{\nu}$ when restricted to
$\Gamma$, where $\mathbf{\nu}$ is the unit normal vector field along
$\Gamma$ pointing outwards.

The Christoffel symbols are defined by
$\Gamma_{ij}^{l}=\theta_{j}^{l}(e_{i})$. In terms of the Christoffel
symbols:
\begin{equation*}
\nabla_{e_{i}}e_{j}=\Gamma_{ij}^{l}e_{l}, \qquad
\nabla_{e_{i}}\alpha^{j}=-\Gamma_{il}^{j}\alpha^{l},
\end{equation*}
where $\nabla_{X}$ means the directional derivative in $X$. The
torsion-free condition may be stated as
$\Gamma_{ij}^{k}=-\Gamma_{ik}^{j}$ and,
along the boundary $\Gamma=\partial\Omega$,
\begin{equation*}
\Gamma_{ij}^{n}=\Gamma_{ji}^{n}\equiv -h_{ij}, \qquad \text{for
any}\,\, 1\le i,j\leq n-1,
\end{equation*}
and $\pi =(h_{ij})$ is the second fundamental form, which is a
symmetric tensor on $\Gamma$. By definition,
$$
h_{ij}=\langle \nabla_{e_{i}} \mathbf{\nu }, e_{j}\rangle.
$$
If $u=\sum_{i=1}^{n-1}u^{i}e_{i}
$ and $w=\sum_{i=1}^{n-1}w^{i}e_{i}$ are two vector fields tangent to $%
\Gamma $, then
\begin{equation*}
\pi (u,w)=\sum_{i,j=1}^{n-1}h_{ij}u^{i}w^{j} .
\end{equation*}

The surface $\Gamma$ carries an induced metric, its Levi-Civita
connection is denoted by $\nabla^{\Gamma }$. Then $\{e_{1}, \cdots,
e_{n-1}\}$ restricted to $\Gamma$ is a moving frame of $T\Gamma$. If
$u$ is a vector field on $\Omega$, then the tangent part of $u$
restricted to $\Gamma$, denoted by $u^{\Vert}$, is a section of
$T\Gamma$, and its covariant derivative is denoted by
$\nabla^{\Gamma}u^{\Vert}$.
Let $H$ denote the mean curvature of
$\Gamma$, that is, $H$ is the trace of the second fundamental form
$\pi$:
$$
H=\sum_{j=1}^{n-1}h_{jj}.
$$

\begin{lemma}\label{lems1}
Let $u$ and $w$ be two vector fields on $\Omega$. Then
\begin{eqnarray}
\langle w\cdot\nabla u,\, \mathbf{\nu }\rangle &=&-\pi(u^{\Vert},
w^{\Vert})-H\langle w,\, \nu\rangle \langle u,\, \nu\rangle +\langle
w, \,\nu\rangle (\nabla\cdot u)\notag\\
&&+\langle w^{\Vert}, \nabla^{\Gamma}\langle u, \nu \rangle \rangle
+\langle u^{\Vert },\nabla^{\Gamma}\langle w, \mathbf{\nu}\rangle
\rangle -\nabla^{\Gamma}\cdot \big(\langle w,\, \mathbf{\nu}\rangle
\,u^{\Vert}\big). \label{june21-045}
\end{eqnarray}
In particular, if $u^{\bot}=w^{\bot}=0$ on $\Gamma$, then
\begin{equation}
\langle w\cdot \nabla u,\, \nu \rangle =-\pi(u,w).
\label{june21-052}
\end{equation}
\end{lemma}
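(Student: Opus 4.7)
The plan is to reduce everything on $\Gamma$ to the induced Levi-Civita connection $\nabla^{\Gamma}$ and the second fundamental form $\pi$ by orthogonally decomposing both vectors and working in a moving frame with $e_{n}=\nu$ on $\Gamma$. First, I would split $w=w^{\Vert}+\langle w,\nu\rangle\,\nu$ on $\Gamma$ and write
\[
\langle w\cdot\nabla u,\nu\rangle
=\langle\nabla_{w^{\Vert}}u,\nu\rangle
+\langle w,\nu\rangle\,\langle\nabla_{\nu}u,\nu\rangle,
\]
then handle the two summands separately.

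For the tangential summand $\langle\nabla_{w^{\Vert}}u,\nu\rangle$, I would further decompose $u=u^{\Vert}+\langle u,\nu\rangle\,\nu$ and use the two facts $\langle\nabla_{X}\nu,\nu\rangle=0$ (from $|\nu|\equiv 1$) and, by differentiating $\langle u^{\Vert},\nu\rangle\equiv 0$ along $w^{\Vert}$,
\[
\langle\nabla_{w^{\Vert}}u^{\Vert},\nu\rangle
=-\langle u^{\Vert},\nabla_{w^{\Vert}}\nu\rangle
=-\pi(w^{\Vert},u^{\Vert}),
\]
which yields $\langle\nabla_{w^{\Vert}}u,\nu\rangle=-\pi(u^{\Vert},w^{\Vert})+\langle w^{\Vert},\nabla^{\Gamma}\langle u,\nu\rangle\rangle$. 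For the normal summand $\langle\nabla_{\nu}u,\nu\rangle$, I would extract it from the ambient divergence identity
\[
\nabla\cdot u=\sum_{i=1}^{n-1}\langle\nabla_{e_{i}}u,e_{i}\rangle+\langle\nabla_{\nu}u,\nu\rangle,
\]
evaluating the tangential sum via the Gauss formula $\nabla_{e_{i}}Y=\nabla^{\Gamma}_{e_{i}}Y+\pi(e_{i},Y)\,\nu$ applied to $u^{\Vert}$ together with the trace $\sum_{i=1}^{n-1}h_{ii}=H$, which produces $\langle\nabla_{\nu}u,\nu\rangle=\nabla\cdot u-\nabla^{\Gamma}\cdot u^{\Vert}-H\langle u,\nu\rangle$.

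Summing the two contributions recovers every term on the right of \eqref{june21-045} except that the surface divergence enters only as $-\langle w,\nu\rangle\,\nabla^{\Gamma}\cdot u^{\Vert}$; the final step is the Leibniz rule
\[
\nabla^{\Gamma}\cdot\bigl(\langle w,\nu\rangle\,u^{\Vert}\bigr)
=\langle w,\nu\rangle\,\nabla^{\Gamma}\cdot u^{\Vert}
+\langle u^{\Vert},\nabla^{\Gamma}\langle w,\nu\rangle\rangle,
\]
which introduces the missing $\langle u^{\Vert},\nabla^{\Gamma}\langle w,\nu\rangle\rangle$ term and finishes \eqref{june21-045}; the specialization \eqref{june21-052} is then immediate since $u^{\bot}=w^{\bot}=0$ kills the last five terms. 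The only genuine obstacle is bookkeeping — keeping signs consistent across the Gauss formula and the antisymmetry $\Gamma_{ij}^{k}=-\Gamma_{ik}^{j}$ versus the symmetries $h_{ij}=h_{ji}$ and $\pi(X,Y)=\pi(Y,X)$ — and fixing the adapted frame $\{e_{1},\ldots,e_{n-1},e_{n}=\nu\}$ on $\Gamma$ from the outset makes this mechanical.
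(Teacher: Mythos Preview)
Your proposal is correct and follows essentially the same route as the paper: split off the tangential and normal parts of $w$, compute $\langle\nabla_{\nu}u,\nu\rangle$ from the ambient divergence (the paper's equation (2.11)), and finish with the Leibniz rule for $\nabla^{\Gamma}\cdot(\langle w,\nu\rangle\,u^{\Vert})$. The only cosmetic difference is that the paper carries out the tangential computation directly in Christoffel symbols ($\Gamma_{jk}^{n}=-h_{jk}$) rather than invoking the Gauss formula, but the content is identical.
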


\begin{proof}
Since
\begin{equation*}
\left(w\cdot\nabla u\right)^{i}=
w^{j}\big(e_{j}(u^{i})+\Gamma_{jk}^{i}u^{k}\big),
\end{equation*}
then
\begin{eqnarray}
\langle w\cdot\nabla u,\, \nu \rangle
&=&w^{j}\big(e_{j}(u^{n})+\Gamma_{jk}^{n}u^{k}\big)\notag  \\
&=&\langle w^{\Vert}, \nabla^{\Gamma}\langle u,\, \nu \rangle
\rangle -\pi(u^{\Vert}, w^{\Vert}) +w^{n}(\nabla_{n}u^{n}).
\label{2.10}
\end{eqnarray}
On the other hand, according to the Ricci equation,
\begin{eqnarray*}
\sum_{j=1}^{n-1}\nabla_{j}u^{j}=\sum_{j=1}^{n-1}\nabla_{j}^{\Gamma}u^{j}
+\sum_{j=1}^{n-1}h_{jj}\langle u, \, \nu\rangle
=\nabla^{\Gamma}\cdot u^{\Vert}+H\langle u,\, \nu \rangle,
\end{eqnarray*}
so that
\begin{eqnarray}
\nabla_{n}u^{n}=\nabla\cdot u-\sum_{j=1}^{n-1}\nabla_{j}u^{j}
=\nabla\cdot u-\nabla^{\Gamma}\cdot u^{\Vert}
  -H\langle u, \nu\rangle. \label{2.11}
\end{eqnarray}
Substitution \eqref{2.11} into \eqref{2.10} yields
\begin{eqnarray*}
\langle w\cdot\nabla u,\, \nu \rangle =\langle w^{\Vert},
\nabla^{\Gamma} \langle u, \nu\rangle \rangle -\pi(u^{\Vert},
w^{\Vert}) +\langle w, \nu\rangle\big(\nabla\cdot
u-\nabla^{\Gamma}\cdot u^{\Vert}-H\langle u, \nu\rangle\big),
\end{eqnarray*}
which, together with the identity:
\begin{equation*}
\langle w, \nu\rangle\nabla^{\Gamma}\cdot u^{\Vert}
=\nabla^{\Gamma}\cdot \big(\langle w, \nu\rangle u^{\Vert}\big)
-\langle u^{\Vert}, \nabla^{\Gamma}\langle w, \nu\rangle\rangle,
\end{equation*}
yields \eqref{june21-045}.
\end{proof}

\begin{lemma}\label{lems2}
Let $u$ be a vector field on $\Omega$, $\omega=\nabla\times u$, and
$d=\nabla\cdot u$. Then

{\rm (i)} $\left. \partial_\nu d\right|_{\Gamma}=\langle \Delta u,\,
\nu \rangle +\nabla^{\Gamma}\times\omega^{\Vert}$, where
\begin{equation*}
\nabla^{\Gamma}\times\omega^{\Vert}=\nabla_{1}^{\Gamma}\omega^{2}
-\nabla_{2}^{\Gamma}\omega^{1}
\end{equation*}
is independent of the choice of a local moving frame, which can be
identified with the exterior derivative of $\omega^{\Vert}$ on
$\Gamma$;

{\rm (ii)} $\frac{1}{2}\partial_\nu(|u|^{2})|_{\Gamma}=\langle
u\times\omega, \, \nu\rangle +\langle u, \,\nu\rangle d
-\pi(u^{\Vert}, u^{\Vert})
-H\langle u, \,\nu\rangle^{2}
+2\langle u^{\Vert}, \nabla^{\Gamma}\langle u, \nu \rangle \rangle$

$\qquad\qquad\qquad\quad\,\,$ $-\nabla^{\Gamma}\cdot\big(\langle
u,\, \nu\rangle u^{\Vert}\big)$.
\end{lemma}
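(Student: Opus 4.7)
My plan is to derive both identities by combining two classical vector identities with the computations already carried out in Lemma~2.1 and the geometric description of $\Gamma$ via the moving frame $\{e_1,\dots,e_{n-1},e_n=\nu\}$.

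For part (i), I would start from the standard identity
\begin{equation*}
\Delta u \;=\; \nabla(\nabla\cdot u) \;-\; \nabla\times\omega \;=\; \nabla d - \nabla\times\omega,
\end{equation*}
which after taking the inner product with $\nu$ gives
\begin{equation*}
\langle \Delta u,\nu\rangle \;=\; \partial_\nu d \;-\; \langle \nabla\times\omega,\nu\rangle.
\end{equation*}
So the identity in (i) is reduced to the boundary identity
$\langle\nabla\times\omega,\nu\rangle|_\Gamma = \nabla^\Gamma\times\omega^{\Vert}$.
To prove this, I would expand $\langle\nabla\times\omega,\nu\rangle$ at a boundary point in a frame with $e_n=\nu$: the key antisymmetry of the curl means only the indices $1\le i,j\le n-1$ contribute to the $n$-th component, so only tangential derivatives of the tangential components $\omega^1,\dots,\omega^{n-1}$ appear. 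Then I would use the structure formula $\nabla_{e_i}\omega^j = e_i(\omega^j)-\Gamma_{ik}^j\omega^k$ and separate the contributions indexed by $n$ using $\Gamma_{ij}^n=-h_{ij}$; the terms involving $h_{ij}\omega^n$ enter symmetrically in $(i,j)$ and thus cancel against their antisymmetric counterparts, leaving precisely $\nabla_1^\Gamma\omega^2-\nabla_2^\Gamma\omega^1$. Frame-independence then follows by recognizing the expression as the restriction to $\Gamma$ of the ambient $2$-form $d\omega^\flat$ (equivalently, as $d(\omega^{\Vert})^\flat$ on $\Gamma$), which is an intrinsic object.

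For part (ii), I would use the Euclidean identity
\begin{equation*}
\tfrac12\nabla |u|^2 \;=\; u\cdot\nabla u \;+\; u\times\omega,
\end{equation*}
so that pairing with $\nu$ yields
\begin{equation*}
\tfrac12\partial_\nu(|u|^2)\big|_\Gamma \;=\; \langle u\cdot\nabla u,\nu\rangle \;+\; \langle u\times\omega,\nu\rangle.
\end{equation*}
I would then apply Lemma~\ref{lems1} with $w=u$ to expand $\langle u\cdot\nabla u,\nu\rangle$: the two symmetric gradient terms in \eqref{june21-045} collapse into the factor $2\langle u^{\Vert},\nabla^\Gamma\langle u,\nu\rangle\rangle$, and rearranging the remaining terms yields exactly the formula claimed in (ii).

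The only genuinely nontrivial step is the boundary identification $\langle\nabla\times\omega,\nu\rangle=\nabla^\Gamma\times\omega^{\Vert}$ in part (i), since one must show explicitly that the ambient covariant derivatives reduce to intrinsic ones along $\Gamma$ and that every second-fundamental-form contribution cancels by antisymmetry. Once this is established, both (i) and (ii) follow by bookkeeping from Lemma~\ref{lems1} and the two classical vector identities above.
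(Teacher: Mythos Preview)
Your proposal is correct and follows essentially the same route as the paper: both parts are obtained by pairing the two classical vector identities $\Delta u=\nabla d-\nabla\times\omega$ and $\tfrac12\nabla|u|^2=u\times\omega+(u\cdot\nabla)u$ with $\nu$, then invoking Lemma~\ref{lems1} for (ii) and a local computation for the boundary identification $\langle\nabla\times\omega,\nu\rangle=\nabla^\Gamma\times\omega^{\Vert}$ in (i). In fact you supply more detail on that local computation (the cancellation of the $h_{ij}$-terms by antisymmetry) than the paper does, which simply asserts that it ``follows easily from a local computation.''
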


\begin{proof} (i)
According to the vector identity:
\begin{equation*}
\Delta u=-\nabla \times \omega +\nabla d,
\end{equation*}
one obtains
\begin{equation*}
\left. \partial_\nu d\right|_{\Gamma} =\langle\Delta u,\, \nu\rangle
+\langle \nabla \times \omega, \,\nu\rangle.
\end{equation*}
It remains to verify that $\langle\nabla\times\omega, \,\nu\rangle$
coincides with $\nabla^{\Gamma}\times\omega^{\Vert}$ which depends
only on the tangent part $\left. \omega^{\Vert}\right|_{\Gamma}$.
The last fact follows easily from a local computation.

(ii) This formula follows directly from the vector identity:
\begin{equation*}
\frac{1}{2}\nabla (|u|^{2})=u\times \left( \nabla \times u\right)
+\left(u\cdot \nabla \right) u,
\end{equation*}
and Lemma \ref{lems1}.
\end{proof}

\section{$L^{2}$-estimates for Vector Fields Satisfying the Absolute Boundary Conditions}

In this section, we establish $L^{2}$-estimates for vector fields
satisfying the absolute boundary conditions \eqref{absolute-1}.

Let $\Omega\subset\mathbb{R}^{3}$ be a bounded domain with a
compact, smooth boundary $\Gamma$. The Sobolev spaces can be defined
in terms of the total derivative $\nabla $. If $T$ is a (smooth)
vector field on $\Omega$ and $\nabla^{j}T$ is the $j$-th covariant
derivative, then, for a nonnegative integer $k$, the Sobolev norm
$\|\cdot\|_{W^{k,p}}, p\ge 1$, is given by
\begin{equation}
\|T\|_{W^{k,p}}=\big(\sum_{j=0}^{k}\|\nabla^{j}T\|_{L^{p}}^{p}\big)^{1/p},
\label{utc01}
\end{equation}
where $\|\cdot\|_{L^{p}}$ denotes the $L^{p}$-norm of vector fields,
that is, $\|T\|_{L^{p}}=(\int_{\Omega}|T|^{p}dx)^{1/p}$ with
$|T|=\sqrt{\langle T,T\rangle}$ the length of the vector field.
When $p=2$, we often use $H^k(\Omega)=W^{k,2}(\Omega)$ and
$\|T\|_{H^k}=\|T\|_{W^{k,2}}$, since such spaces are Hilbert spaces.
The most important in the theory of Sobolev spaces is the Sobolev
imbedding theorem (see Theorem 4.12 of Adams-Fournier
\cite{Adams-Fournier03}, page 85).

We will use the following integration by parts formula:
\begin{equation}
\int_{\Omega}\langle \nabla \times u,\, w\rangle\, dx
=\int_{\Omega}\langle u,\nabla \times w\rangle\, dx
+\int_{\Gamma}\langle u\times w,\mathbf{\nu}\rangle \, dS,
\label{s28-08}
\end{equation}
where $dS$ is the surface measure on $\Gamma$. This formula follows
easily from the Stokes theorem.

If $u$ is a vector field on $\Omega$, then three $L^{2}$-norms are
related to the total derivative $\nabla u$:

(i) The (squared) $L^{2}$-norm $\|\nabla u\|_{2}^{2}$;

(ii) $\|\nabla\cdot u\|_{2}^{2}+\|\nabla \times u\|_{2}^{2}$, which
is most useful in the study of the Navier-Stokes equations, since,
if $u$ is the velocity field of a incompressible fluid, then
$\nabla\cdot u=0$ and $\nabla \times u=\omega$ is the vorticity in
physics;

(iii) The quantity $-\int_{\Omega}\langle \Delta u, u\rangle$ which
is related to the spectral gap of the (Hodge De Rham) Laplacian
$\Delta$ which has a clear geometric meaning.

The relations among these three quantities are the following.

\begin{lemma}\label{les29-001}
If $u\in H^{2}(\Omega)$ is a vector field, then
\begin{eqnarray}
\int_{\Omega}\langle\Delta u, u\rangle\, dx
 &=&-\int_{\Omega}|\omega|^{2}\,dx
 -\int_{\Omega}|\nabla\cdot u|^{2}\,dx
 \notag\\ &&
 +\int_{\Gamma}\langle u\times \omega, \mathbf{\nu }\rangle\, dS
+\int_{\Gamma}\left(\nabla\cdot u\right) \langle
u,\mathbf{\nu}\rangle\,dS, \label{07june03-04}
\end{eqnarray}
and
\begin{eqnarray}
\int_{\Omega}|\nabla u|^{2}\,dx&=&\int_{\Omega}|\omega|^{2}\,dx
+\int_{\Omega}|\nabla\cdot u|^{2}\,dx
-\int_{\Gamma}\pi\big(u^{\Vert}, u^{\Vert}\big)\,dS\notag \\
&&-\int_{\Gamma}H|u^{\bot}|^{2}\, dS + 2\int_{\Gamma }\langle
u^{\Vert },\nabla ^{\Gamma }\langle u,\mathbf{\nu }\rangle
\rangle\,dS.
\label{s28-09}
\end{eqnarray}
\end{lemma}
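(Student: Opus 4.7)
\medskip

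\noindent\textbf{Plan of proof.} I would prove the two identities separately, treating (\ref{07june03-04}) as the direct integration-by-parts identity and (\ref{s28-09}) as the more delicate one that requires Lemma \ref{lems1}.

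For (\ref{07june03-04}), I would start from the pointwise vector identity $\Delta u=-\nabla\times\omega+\nabla(\nabla\cdot u)$ used in Lemma \ref{lems2}, pair with $u$, and integrate over $\Omega$. For the first piece I apply (\ref{s28-08}) with $w=u$ to get
\begin{equation*}
\int_{\Omega}\langle \nabla\times\omega,u\rangle\,dx
  =\int_{\Omega}|\omega|^{2}\,dx+\int_{\Gamma}\langle\omega\times u,\nu\rangle\,dS,
\end{equation*}
since $\nabla\times u=\omega$, and then flip the cross product to obtain $\langle u\times\omega,\nu\rangle$. For the second piece, standard integration by parts (Green's formula applied to the scalar $d=\nabla\cdot u$) gives
\begin{equation*}
\int_{\Omega}\langle\nabla d,u\rangle\,dx=-\int_{\Omega}|d|^{2}\,dx+\int_{\Gamma}d\,\langle u,\nu\rangle\,dS.
\end{equation*}
Combining the two yields (\ref{07june03-04}).

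The core of (\ref{s28-09}) is a pointwise algebraic identity in $\R^{n}$: using Einstein summation,
\begin{equation*}
|\nabla u|^{2}-|\omega|^{2}-(\nabla\cdot u)^{2}
=\partial_{j}\bigl(u^{k}\partial_{k}u^{j}-u^{j}\partial_{k}u^{k}\bigr)
=\nabla\cdot\bigl((u\cdot\nabla)u-u\,(\nabla\cdot u)\bigr),
\end{equation*}
which I would verify by expanding $|\omega|^{2}=\epsilon_{ijk}\epsilon_{ilm}(\partial_{j}u^{k})(\partial_{l}u^{m})$ via the $\epsilon$-$\delta$ identity and observing that the two second-order terms that appear after differentiating the right-hand side cancel after a relabeling $j\leftrightarrow k$. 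Applying the divergence theorem then gives
\begin{equation*}
\int_{\Omega}\bigl(|\nabla u|^{2}-|\omega|^{2}-|\nabla\cdot u|^{2}\bigr)dx
=\int_{\Gamma}\bigl(\langle u\cdot\nabla u,\nu\rangle-\langle u,\nu\rangle\,\nabla\cdot u\bigr)dS.
\end{equation*}

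At this point I would invoke Lemma \ref{lems1} with $w=u$ to rewrite $\langle u\cdot\nabla u,\nu\rangle$. After the cancellation of $\langle u,\nu\rangle\nabla\cdot u$, the boundary integrand becomes
\begin{equation*}
-\pi(u^{\Vert},u^{\Vert})-H|u^{\bot}|^{2}+2\langle u^{\Vert},\nabla^{\Gamma}\langle u,\nu\rangle\rangle
-\nabla^{\Gamma}\!\cdot\!\bigl(\langle u,\nu\rangle\,u^{\Vert}\bigr).
\end{equation*}
The last term is a surface divergence on the closed manifold $\Gamma=\partial\Omega$, so its integral vanishes by the surface divergence theorem on a boundaryless surface, which produces exactly (\ref{s28-09}).

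The only step requiring care is the pointwise cancellation in the Bochner-type identity, since it is the algebraic heart of the proof; the rest is bookkeeping of boundary terms provided by Lemma \ref{lems1} and the observation that surface divergences drop out on $\Gamma$. I expect no serious obstacle beyond keeping track of signs in the cross-product and outward-normal conventions.
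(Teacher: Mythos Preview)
Your proof of (\ref{07june03-04}) is identical to the paper's: both split $\Delta u=-\nabla\times\omega+\nabla(\nabla\cdot u)$, pair with $u$, and integrate by parts via (\ref{s28-08}) and the scalar Green formula.

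For (\ref{s28-09}) you take a genuinely different route. You prove the pointwise divergence identity
\[
|\nabla u|^{2}-|\omega|^{2}-(\nabla\cdot u)^{2}=\nabla\cdot\bigl((u\cdot\nabla)u-u\,(\nabla\cdot u)\bigr)
\]
directly in Cartesian coordinates, apply the divergence theorem, and then invoke Lemma~\ref{lems1} (with $w=u$) to convert the boundary term $\langle u\cdot\nabla u,\nu\rangle$ into the second-fundamental-form expression, with the surface divergence integrating to zero on the closed boundary $\Gamma$. The paper instead integrates the Bochner identity $\langle\Delta u,u\rangle=\tfrac{1}{2}\Delta|u|^{2}-|\nabla u|^{2}$, uses Lemma~\ref{lems2}(ii) to expand $\tfrac{1}{2}\partial_{\nu}(|u|^{2})$ on $\Gamma$, and then \emph{subtracts} the already-proved identity (\ref{07june03-04}) to isolate $\int_{\Omega}|\nabla u|^{2}$. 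Your argument is more self-contained for (\ref{s28-09}) since it does not rely on (\ref{07june03-04}); the paper's approach is slightly more geometric (Bochner plus Lemma~\ref{lems2}) and ties the two identities together. Both are correct and of comparable length; the only care needed in yours is that the $\epsilon$-$\delta$ computation is written for $\mathbb{R}^{3}$, which is exactly the setting of Section~3.
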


\begin{proof}
Formula \eqref{07june03-04} follows from the vector identity
\begin{equation}
\Delta u=-\nabla\times\omega +\nabla (\nabla\cdot u)
\label{june03-01}
\end{equation}
and integration by parts formula (\ref{s28-08}). To show
(\ref{s28-09}), integrating the Bochner's identity:
\begin{equation}
\langle \Delta u,u\rangle =\frac{1}{2}\Delta |u|^{2}-|\nabla u|^{2},
\label{Bochner}
\end{equation}
together with integration by parts formulas, we obtain
\begin{eqnarray}
\int_{\Omega}\langle \Delta u,u\rangle &=&-\int_{\Omega}|\nabla
u|^{2}\,dx + \frac{1}{2}\int_{\Gamma}\partial_\nu(|u|^{2})\, dS
\notag \\
&=&-\int_{\Omega}|\nabla u|^{2}\, dx +\int_{\Gamma}\langle
u^{\Vert}\times \omega^{\Vert}, \nu \rangle \, dS
+\int_{\Gamma}\langle u, \mathbf{\nu}\rangle \nabla\cdot u\, dS  \notag \\
&&+2\int_{\Gamma}\langle u^{\Vert}, \nabla^{\Gamma}\langle u,
\mathbf{\nu}\rangle \rangle\, dS -\int_{\Gamma}\pi \big(u^{\Vert},
u^{\Vert}\big)\, dS -\int_{\Gamma}H\langle u,
\mathbf{\nu}\rangle^{2} \, dS, \label{07june03-03}
\end{eqnarray}
where we have used Lemma \ref{lems2}(ii). Now (\ref{s28-09}) follows
from (\ref{07june03-04}) and (\ref{07june03-03}).
\end{proof}

\begin{lemma}
\label{ths28-01} Suppose that $u\in H^{1}(\Omega)$ is a vector field
satisfying the boundary conditions that $\left.
u^{\Vert}\right|_{\Gamma_{1}}=0$ and $\left.
u^{\bot}\right|_{\Gamma_{2}}=0$, where $\Gamma =\Gamma_{1}\cup
\Gamma_{2}$. Then
\begin{equation}
\|u\|_{H^{1}}\leq C\|(\omega, \nabla\cdot u, u)\|_{2}
\label{07june06-01}
\end{equation}
for some constant $C$ depending only on $\Omega$.
\end{lemma}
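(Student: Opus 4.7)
\medskip
\noindent\textbf{Proof plan.}
The plan is to invoke the identity \eqref{s28-09} from Lemma \ref{les29-001}, exploit the mixed boundary conditions to eliminate or control the boundary integrals, and then absorb the remaining boundary terms into the $H^1$-norm using a standard trace/interpolation inequality. Since \eqref{s28-09} was derived for $u\in H^2(\Omega)$, I first extend it to $u\in H^1(\Omega)$ satisfying the given split boundary conditions by approximation: pick a sequence of smooth vector fields $u_k$ converging to $u$ in $H^1(\Omega)$ and respecting the conditions $u^\Vert|_{\Gamma_1}=0$, $u^\bot|_{\Gamma_2}=0$ (this uses the usual density of such test fields in the corresponding mixed Sobolev space, together with the fact that the traces on $\Gamma$ are continuous from $H^1(\Omega)$ into $H^{1/2}(\Gamma)$).

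Next I apply the boundary conditions to the five boundary integrals appearing in \eqref{s28-09}. On $\Gamma_1$ the tangential component of $u$ vanishes, so the integrals $\int_{\Gamma_1}\pi(u^\Vert,u^\Vert)\,dS$ and $\int_{\Gamma_1}\langle u^\Vert,\nabla^\Gamma\langle u,\nu\rangle\rangle\,dS$ are both zero. On $\Gamma_2$ the function $\langle u,\nu\rangle$ vanishes, so the tangential gradient $\nabla^\Gamma\langle u,\nu\rangle$ vanishes on $\Gamma_2$ and also $H|u^\bot|^2=0$ there. What remains is the combined boundary contribution
\begin{equation*}
-\int_{\Gamma_2}\pi\bigl(u^\Vert,u^\Vert\bigr)\,dS-\int_{\Gamma_1}H\,|u^\bot|^2\,dS,
\end{equation*}
both of which, since $\Gamma$ is compact and smooth so that $\pi$ and $H$ are bounded, are dominated by $C\|u\|_{L^2(\Gamma)}^2$.

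The final step controls the trace norm. By the standard trace inequality $\|u\|_{L^2(\Gamma)}\le C\|u\|_{H^{1/2}(\Omega)}$ combined with the interpolation $\|u\|_{H^{1/2}}\le C\|u\|_{L^2}^{1/2}\|u\|_{H^1}^{1/2}$, one has
\begin{equation*}
\|u\|_{L^2(\Gamma)}^2\le C\,\|u\|_{L^2(\Omega)}\|u\|_{H^1(\Omega)}\le \varepsilon\|u\|_{H^1}^2+C_\varepsilon\|u\|_{L^2}^2
\end{equation*}
by Young's inequality. Plugging this into the simplified form of \eqref{s28-09} gives
\begin{equation*}
\|\nabla u\|_{L^2}^2\le \|\omega\|_{L^2}^2+\|\nabla\cdot u\|_{L^2}^2+\varepsilon\|u\|_{H^1}^2+C_\varepsilon\|u\|_{L^2}^2,
\end{equation*}
and adding $\|u\|_{L^2}^2$ to both sides and choosing $\varepsilon<1$ lets me absorb the $\varepsilon\|u\|_{H^1}^2$ term on the left, yielding \eqref{07june06-01}.

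I expect the main obstacle to be technical rather than conceptual: making sure the density argument for $H^1$ fields obeying the split boundary conditions on $\Gamma=\Gamma_1\cup\Gamma_2$ is valid (in particular, that such approximating fields exist when $\Gamma_1$ and $\Gamma_2$ meet). If $\Gamma_1$ and $\Gamma_2$ are simply unions of connected components of $\Gamma$ this is immediate; otherwise one needs the regularity of the common interface to apply a local cutoff/mollification argument. Once that is in place the rest is a clean assembly of the geometric identity, the boundary conditions, and a standard trace interpolation.
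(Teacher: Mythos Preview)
Your proposal is correct and follows essentially the same route as the paper: invoke identity \eqref{s28-09}, use the boundary conditions to kill the cross term $2\int_\Gamma\langle u^\Vert,\nabla^\Gamma\langle u,\nu\rangle\rangle\,dS$ and reduce the remaining boundary contribution to $-\int_{\Gamma_2}\pi(u^\Vert,u^\Vert)\,dS-\int_{\Gamma_1}H|u^\bot|^2\,dS\le C\int_\Gamma|u|^2\,dS$, then absorb via a trace inequality with small parameter. The only differences are cosmetic: the paper quotes the trace estimate \eqref{3.8a} directly from Grisvard rather than deriving it by $H^{1/2}$-interpolation plus Young, and it does not spell out the $H^2\to H^1$ density step you (rightly) flag.
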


\begin{proof}
According to (\ref{s28-09}) and the boundary conditions, we have
\begin{eqnarray*}
\int_{\Omega}|\nabla u|^{2} \, dx&=&\int_{\Omega}|(\omega,
\nabla\cdot u)|^{2}\, dx
-\int_{\Gamma_{2}}\pi(u^{\Vert}, u^{\Vert})\, dS
-\int_{\Gamma_{1}}H\langle u, \nu \rangle^{2}\, dS \\
&\leq &\int_{\Omega}|(\omega, \nabla\cdot u)|^{2}\, dx
+C\int_{\Gamma}|u|^{2} \, dS.
\end{eqnarray*}
Now the conclusion follows from the trace imbedding theorem (Theorem
1.5.1.10 in Grisvard \cite{gri1}):
For any
$\varepsilon\in (0,1]$,
\begin{equation}\label{3.8a}
\int_{\Gamma}|u|^{2}\, dS\leq \varepsilon \int_{\Omega}|\nabla
u|^{2}\, dx +\frac{C}{\varepsilon}\int_{\Omega}|u|^{2}\, dx,
\end{equation}
where $C$ is a constant depending only on $\Omega $.
\end{proof}

\begin{remark}
The above lemma holds not only for the $L^{2}$-norm, but also for
the $L^{p}$-norm for any $p\in \lbrack 2,\infty)$. Indeed, under the
same boundary conditions on $u$ as in Lemma \ref{ths28-01}, as a
special case of Theorem 10.5 of \cite{agmon1965}, we have
\begin{equation}
\|u\|_{W^{1,p}}\leq C\big(\|(\omega, \nabla\cdot
u)\|_{p}+\|u\|_{2}\big), \label{07june01-12}
\end{equation}
where $C$ also depends on $p\geq 2$. Estimate (\ref{07june01-12}) is
the Agmon-Douglis-Nirenberg's estimate (cf. \cite{agmon1965}).
\end{remark}

Next we consider the second derivative $\nabla^2 u$ of $u$.
\begin{lemma}\label{ths29-01}
There exists $C>0$ depending only on $\Omega$ such that
\begin{equation*}
\|\nabla^2 u\|_{2}^{2}\leq C\big(\|\Delta u\|_{2}^{2}+\|\nabla
u\|_{2}^{2}\big)
\end{equation*}
for any vector field $u\in H^{2}$ satisfying the absolute boundary
conditions \eqref{absolute-1}.
\end{lemma}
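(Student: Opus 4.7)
The plan is to reduce the estimate on $\nabla^2 u$ to separate controls on the curl and divergence of $u$, exploiting the absolute boundary conditions at every stage. Set $d=\nabla\cdot u$. Two successive integrations by parts (arranged so the commutators $[\partial_i,\partial_j]$ produce only boundary contributions in the Euclidean setting) yield an identity of the shape
$$\|\nabla^2 u\|_2^2 \;=\; \|\nabla\omega\|_2^2 + \|\nabla d\|_2^2 + \mathcal{B}(u),$$
where $\mathcal{B}(u)$ is a boundary integral on $\Gamma$ whose integrand, after repeated integration by parts in a moving frame compatible with $\Gamma$, can be reorganized into a quadratic form in $\nabla u|_\Gamma$ with coefficients depending on the second fundamental form $\pi$ and the mean curvature $H$. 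The goal is then to bound each of the three pieces in terms of $\|\Delta u\|_2$ and $\|\nabla u\|_2$.

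For $\|\nabla\omega\|_2$, I would apply the identity (3.5) of Lemma~3.1 to the vector field $\omega$ itself. The absolute BC forces $\omega^{\Vert}|_\Gamma=0$, while $\nabla\cdot\omega\equiv 0$ holds identically, so two of the three curvature terms in (3.5) drop out and only $-\int_\Gamma H|\omega^\bot|^2\,dS$ remains; the trace inequality (3.8) absorbs this into $\varepsilon\|\nabla\omega\|_2^2+C_\varepsilon\|\omega\|_2^2$, giving $\|\nabla\omega\|_2^2\leq C(\|\nabla\times\omega\|_2^2+\|\omega\|_2^2)$. For $\|\nabla d\|_2$, Lemma~2.2(i) together with $\omega^\Vert|_\Gamma=0$ produces the Neumann-type identity $\partial_\nu d|_\Gamma=\langle\Delta u,\nu\rangle$; integrating $\|\nabla d\|_2^2$ by parts twice using $\Delta d=\nabla\cdot\Delta u$, the boundary contributions cancel exactly and
$$\|\nabla d\|_2^2 \;=\; \int_\Omega \nabla d\cdot\Delta u\,dx \;\leq\; \|\nabla d\|_2\,\|\Delta u\|_2,$$
so $\|\nabla d\|_2\leq\|\Delta u\|_2$. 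Combining with $\nabla\times\omega=-\Delta u+\nabla d$ and $\|\omega\|_2\leq\|\nabla u\|_2$ yields $\|\nabla\omega\|_2^2+\|\nabla d\|_2^2\leq C(\|\Delta u\|_2^2+\|\nabla u\|_2^2)$.

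It remains to show that $\mathcal{B}(u)$ can be reduced to a form that is controlled by the trace inequality (3.8), i.e.\ $|\mathcal{B}(u)|\leq \varepsilon\|\nabla^2 u\|_2^2 + C_\varepsilon\|\nabla u\|_2^2$, after which absorbing the $\varepsilon$-term closes the estimate. This last reduction is the main obstacle: the raw integration by parts produces contributions that are first-derivative $\times$ second-derivative on $\Gamma$, and one must use the moving-frame formulas of Lemmas~2.1 and 2.2, combined crucially with both $u^\bot|_\Gamma=0$ and $\omega^\Vert|_\Gamma=0$, to reorganize each such term into either a pure quadratic in $\nabla u|_\Gamma$ (controllable by (3.8)) or a surface divergence on the closed manifold $\Gamma$ (which integrates to zero). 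Without the full absolute boundary conditions — in particular, without $\omega^\Vert|_\Gamma=0$ providing $(\nabla\times\omega)^{\bot}|_\Gamma = 0$ as noted in Section~3 — uncontrollable second-derivative traces would survive in $\mathcal{B}(u)$ and the $\varepsilon$-absorption would fail.
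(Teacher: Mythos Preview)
Your outline is sound, but it follows a genuinely different route from the paper's proof. The paper does not pass through $\|\nabla\omega\|_2$ and $\|\nabla d\|_2$ at all; instead it applies the scalar Bochner identity componentwise,
\[
|\nabla^2 u^k|^2=\tfrac12\Delta|\nabla u^k|^2-\langle\nabla\Delta u^k,\nabla u^k\rangle,
\]
integrates, and combines it with formula~(\ref{s29-71}) to obtain in one stroke
\[
\|\nabla^2 u\|_2^2=\|\Delta u\|_2^2
-\int_\Gamma\!\sum_{i,j<n}h_{ij}\nabla_i u^k\nabla_j u^k\,dS
-\int_\Gamma H\,|\partial_\nu u^k|^2\,dS
+2\sum_k\int_\Gamma\langle\nabla^\Gamma u^k,\nabla^\Gamma(\partial_\nu u^k)\rangle\,dS.
\]
The only dangerous term is the last one, and the paper handles it by a direct moving-frame computation: $u^n|_\Gamma=0$ kills the $k=n$ summand, while for $k<n$ the two absolute conditions together force $\partial_\nu u^k=-\sum_{j<n}\Gamma^k_{nj}u^j$ on $\Gamma$, which is \emph{zeroth order} in $u$; hence the integrand is pointwise $\le C(|u|^2+|\nabla u|^2)$ and trace embedding finishes.

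Your detour has two genuinely new ingredients not in the paper: the estimate $\|\nabla d\|_2\le\|\Delta u\|_2$ via the exact boundary cancellation coming from $\partial_\nu d|_\Gamma=\langle\Delta u,\nu\rangle$, and the direct control of $\|\nabla\omega\|_2$ by applying (\ref{s28-09}) to $\omega$ itself. Both are correct and attractive. However, they do not let you avoid the hard step: your residual $\mathcal B(u)$ still carries exactly the same kind of ``first derivative $\times$ tangential second derivative'' surface integrals, and to tame them you will need essentially the same moving-frame reduction (that $\partial_\nu u^k$ is zeroth order under the absolute conditions) that the paper performs. So the two arguments converge at the boundary analysis; the paper's route is shorter because it reaches that point in a single identity rather than three, whereas your route is more modular and yields the side estimates on $\nabla d$ and $\nabla\omega$ along the way.
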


\begin{proof}
Apply Lemma \ref{lems2}(ii)
to a gradient field
$\nabla f$ for any scalar function $f$ on $\Omega$. Then
\begin{eqnarray}
\frac{1}{2}\partial_\nu(|\nabla f|^{2}) &=&-\pi \big((\nabla
f)^{\Vert}, (\nabla f)^{\Vert}\big) -H\left|\partial_\nu
f\right|^{2}+ \partial_\nu f \Delta f  \notag \\
&&+2\langle \left(\nabla f\right)^{\Vert },
  \nabla ^{\Gamma}(\partial_\nu f)\rangle
  -\nabla^{\Gamma}\cdot\big(\partial_\nu f \left(\nabla f\right)^{\Vert}\big).
\label{s29-71}
\end{eqnarray}
Write $u=(u^{1}, \cdots, u^{n})$ under an orthonormal basis of
$T\Omega $. Then, according to the Bochner identity \eqref{Bochner},
\begin{eqnarray*}
|\nabla^2 u|^{2}
=\sum_{k=1}^n|\nabla^2u^{k}|^{2} =\sum_{k=1}^n\big(\frac{1}{2}\Delta
|\nabla u^{k}|^{2}-\langle \nabla \Delta u^{k},\nabla u^{k}\rangle
\big),
\end{eqnarray*}
and, after integration, we find
\begin{eqnarray*}
\|\nabla^2 u\|_{2}^{2}
&=&\sum_{k+1}^n\left(\frac{1}{2}\int_{\Omega}\Delta |\nabla
u^{k}|^{2}\,
dx-\int_{\Omega}\langle \nabla\Delta u^{k},\nabla u^{k}\rangle \right)\, dx \\
&=&\sum_{k=1}^n\int_{\Omega}(\Delta u^{k})^{2}\, dx
+\frac{1}{2}\int_{\Gamma}\partial_\nu(|\nabla u^{k}|^{2})\,
dS-\int_{\Gamma
}(\Delta u^{k})\partial_\nu u^{k}\, dS \\
&=&\sum_{k=1}^n\int_{\Omega}(\Delta u^{k})^{2}\, dx
-\int_{\Gamma}\sum_{i,j=1}^{n-1}h_{ij}(\nabla_{i}u^{k})(\nabla_{j}u^{k})\, dS \\
&&-\int_{\Gamma}\sum_{k=1}^n |\partial_\nu u^{k}|^{2}H \, dS
+2\int_{\Gamma}\langle \nabla^{\Gamma}u^{k}, \nabla^{\Gamma}(\partial_\nu u^{k})\rangle\, dS \\
&=&-\int_{\Gamma}\sum_{i,j=1}^{n-1}h_{ij}\nabla_{i}u^{k}\nabla_{j}u^{k}
\, dS
-\int_{\Gamma}\sum_{k=1}^n\langle \nabla u^{k}, \mathbf{\nu}\rangle^{2}H\, dS \\
&&+2\sum_{k=1}^n\int_{\Gamma}\langle \nabla^{\Gamma}u^{k},
\nabla^{\Gamma}\langle \nabla u^{k}, \mathbf{\nu}\rangle \rangle \,
dS +\|\Delta u\|_{2}^{2},
\end{eqnarray*}
where the third equality follows from (\ref{s29-71}) applying to
each  $u^{k}$. We now handle the boundary integral:
\begin{eqnarray*}
\sum_{k=1}^n\int_{\Gamma}\langle \nabla^{\Gamma}u^{k},
\nabla^{\Gamma}(\partial_\nu u^{k})\rangle\, dS
&=&\sum_{k=1}^{n-1}\int_{\Gamma}\langle \nabla^{\Gamma}u^{k},
 \nabla^{\Gamma}(\partial_\nu u^{k})\rangle\, dS
+\int_{\Gamma}\langle \nabla^{\Gamma}u^{n},
\nabla^{\Gamma}(\partial_\nu u^{n})\rangle \, dS\\
&=&\sum_{k=1}^{n-1}\int_{\Gamma}\langle \nabla^{\Gamma}u^{k},
 \nabla^{\Gamma}(\partial_\nu u^{k})\rangle,
\end{eqnarray*}
where we have used the fact that $u^{n}|_{\Gamma}=0$ so that
$\nabla^{\Gamma}u^{n}=0$. For $1\le k\leq n-1$, we have
$\nabla_{n}u^{k}=\nabla_{k}u^{n}=0$ on $\Gamma$. However,
\begin{equation*}
\partial_\nu u^{k}=\nabla_{n}u^{k}-\sum_{j=1}^{n-1}u^{j}\Gamma_{nj}^{k}
=-\sum_{j=1}^{n-1}u^{j}\Gamma_{nj}^{k},
\end{equation*}
so that
\begin{eqnarray*}
\sum_{k=1}^{n-1}\langle\nabla^{\Gamma}u^{k},
\nabla^{\Gamma}(\partial_\nu u^{k})\rangle =-\sum_{k=1}^{n-1}\langle
\nabla^{\Gamma}u^{k},
\nabla^{\Gamma}\sum_{j=1}^{n-1}u^{j}\Gamma_{nj}^{k}\rangle\leq
C\left( |u|^{2}+|\nabla u|^{2}\right),
\end{eqnarray*}
and the inequality follows the trace imbedding theorem.
\end{proof}

\begin{corollary}\label{coro-s29-1}
Let $u\in H^{2}(\Omega)$ be a vector field which satisfies the
absolute boundary condition \eqref{absolute-1} on $\Gamma$. Let
$d=\nabla\cdot u$, $\omega=\nabla \times u$, and $\psi =\nabla
\times\omega$. Then

{\rm (i)} $\nabla\cdot\omega=\nabla\cdot\psi =0$;

{\rm (ii)} $\Delta u=\nabla d-\nabla \times \omega$;

{\rm (iii)} $\left. u^{\bot}\right|_{\Gamma}=0$, $\left.
\omega^{\Vert}\right|_{\Gamma}=0$, and
$\left.\psi^{\bot}\right|_{\Gamma}=0$;

{\rm (iv)} There exists $C>0$ depending only on $\Omega$ such that
\begin{equation*}
\|u\|_{H^{2}}\leq C \|(\nabla d, \nabla\times\omega, u)\|_{2}.
\end{equation*}
That is, $\|(\nabla d, \nabla\times \omega, u)\|_{2}$ is an
equivalent norm for any vector field $u\in H^{2}(\Omega)$ satisfying
the absolute boundary condition \eqref{absolute-1}.
\end{corollary}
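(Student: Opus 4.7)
Parts (i)--(iii) are essentially routine given the machinery already built. Part (ii) is just the vector identity $\Delta u = \nabla(\nabla\cdot u) - \nabla\times(\nabla\times u)$, recorded as \eqref{june03-01}. Part (i) follows from $\nabla\cdot(\nabla\times X)\equiv 0$ applied to $X = u$ and $X=\omega$. In (iii), the first two equalities are precisely the assumed absolute boundary conditions \eqref{absolute-1}. For the third, I would invoke the identity derived in the proof of Lemma~\ref{lems2}(i), namely $\langle \nabla\times\omega,\nu\rangle = \nabla^{\Gamma}\times\omega^{\Vert}$ along $\Gamma$: since this quantity depends only on the tangential trace $\omega^{\Vert}|_{\Gamma}$ (it is, up to sign, the exterior derivative of the $1$-form $\omega^{\Vert}$ on $\Gamma$), and $\omega^{\Vert}|_{\Gamma}=0$ by hypothesis, we conclude $\psi^{\bot}|_{\Gamma} = 0$.

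The heart of the argument is (iv). From Lemma~\ref{ths29-01}, $\|\nabla^2 u\|_2 \leq C(\|\Delta u\|_2 + \|\nabla u\|_2)$, and (ii) already gives $\|\Delta u\|_2 \leq \|\nabla d\|_2 + \|\nabla\times\omega\|_2$. Applying Lemma~\ref{ths28-01} with $\Gamma_1 = \emptyset$ and $\Gamma_2 = \Gamma$ (legitimate since $u^{\bot}|_{\Gamma}=0$) yields $\|u\|_{H^1} \leq C\|(\omega,d,u)\|_2$. It therefore suffices to dominate $\|(\omega,d)\|_2$ by $\|\nabla d\|_2 + \|\nabla\times\omega\|_2 + \|u\|_2$, and this is the one nontrivial step which I expect to be the main obstacle, since ordinary Poincar\'e-type inequalities do not directly furnish such $L^2$-bounds on $\omega$ and $d$ from gradient data alone.

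The resolution is to return to identity \eqref{07june03-04} of Lemma~\ref{les29-001}. Under the absolute boundary conditions both boundary integrals there vanish: $\int_{\Gamma} d\,\langle u,\nu\rangle\,dS = 0$ because $u^{\bot}=0$ on $\Gamma$; and $\int_{\Gamma}\langle u\times\omega,\nu\rangle\,dS = 0$ because on $\Gamma$ we have $u = u^{\Vert}$ and $\omega = \omega^{\bot}\nu$, so the cross product $u\times\omega$ is tangent to $\Gamma$ and has zero normal component. Hence
\begin{equation*}
\|\omega\|_2^2 + \|d\|_2^2 \;=\; -\int_{\Omega}\langle\Delta u,u\rangle\,dx \;=\; -\int_{\Omega}\langle \nabla d - \nabla\times\omega,\,u\rangle\,dx,
\end{equation*}
and Cauchy--Schwarz together with Young's inequality bounds the right-hand side by $\varepsilon(\|\nabla d\|_2^2 + \|\nabla\times\omega\|_2^2) + C_\varepsilon\|u\|_2^2$. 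Chaining this with the two estimates above and absorbing the $\varepsilon$-terms delivers $\|u\|_{H^2}\leq C\|(\nabla d,\nabla\times\omega,u)\|_2$; the reverse inequality is immediate from the definitions, yielding the asserted norm equivalence.
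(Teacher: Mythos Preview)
Your proof is correct. Parts (i)--(iii) agree with the paper's argument essentially verbatim (for (iii) the paper phrases it via $(\Delta u)^{\bot}=(\nabla d)^{\bot}-\psi^{\bot}$ and Lemma~\ref{lems2}(i), which is the same computation you give).

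For (iv) the two arguments diverge. The paper takes a shorter path: after Lemma~\ref{ths29-01} gives $\|\nabla^{2}u\|_{2}\leq C(\|\Delta u\|_{2}+\|\nabla u\|_{2})$, it invokes the Ehrling--Nirenberg--Gagliardo interpolation inequality $\|\nabla u\|_{2}\leq \varepsilon\|\nabla^{2}u\|_{2}+C_{\varepsilon}\|u\|_{2}$ to absorb the first-order term directly and obtain $\|u\|_{H^{2}}\leq C(\|\Delta u\|_{2}+\|u\|_{2})$; then (ii) finishes. Your route instead controls $\|\nabla u\|_{2}$ through Lemma~\ref{ths28-01} by $\|(\omega,d,u)\|_{2}$, and then handles $\|(\omega,d)\|_{2}$ via identity~\eqref{07june03-04}, observing that both boundary integrals vanish under the absolute conditions. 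This is slightly longer but entirely self-contained within the estimates already developed in the section, avoiding the external interpolation result and making explicit where the boundary conditions enter the lower-order bound. Either approach closes the argument; the paper's is more economical, yours more transparent.
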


Identity (i) is obvious, and (ii) follows from the vector identity:
\begin{equation*}
\Delta u=-\nabla \times \left( \nabla \times u\right) +\nabla
\left(\nabla\cdot u\right).
\end{equation*}
To show $\left. \psi^{\bot}\right|_{\Gamma}=0$,  we use (ii) to
obtain
\begin{equation*}
\left(\Delta u\right)^{\bot}=\left(\nabla d\right)^{\bot}
-\psi^{\bot},
\end{equation*}
and the claim follows from Lemma \ref{lems2}. According to Lemma
\ref{ths29-01} and the Ehrling-Nirenberg-Gagliardo interpolation
inequality,
one has
\begin{equation*}
\|u\|_{H^{2}}\leq C\big(\|\Delta u\|_{2}+\|u\|_{2}\big),
\end{equation*}
and (iv) follows from (ii).

\section{Navier-Stokes Equations}

Consider the Navier-Stokes equations \eqref{1.1}--\eqref{1.2}
in a bounded domain $\Omega\subset\mathbb{R}^{3}$,
subject to the kinematic condition \eqref{kinematic-1} and the
nonhomogeneous vorticity condition \eqref{vorticity-1}.

Using the vector identity:
\begin{equation}\label{4.2a}
u\cdot\nabla u=\omega \times u+\frac{1}{2}\nabla(|u|^{2}),
\end{equation}
one may rewrite (\ref{1.1}) as
\begin{equation}
\partial_t u+  \omega \times u=\mu
\Delta u-\nabla \big(p+\frac{1}{2}|u|^{2}\big).
\label{nsde8}
\end{equation}

\subsection{Neumann problem for the pressure $p$}

\begin{lemma}\label{les29-11}
If $u(t,x)$ is a smooth solution of the Navier-Stokes equations
\eqref{1.1}--\eqref{1.2} in $\Omega_T:=\Omega\times (0, T]$ subject
to the boundary conditions \eqref{kinematic-1}--\eqref{vorticity-1},
then $p$ solves the Neumann problem:
\begin{equation}\label{pressure-ns-1}
\Delta p=-\nabla\cdot\left(u\cdot\nabla u\right), \qquad \left.
\partial_\nu p\right|_{\Gamma}=\pi
(u,u)-\mu \nabla ^{\Gamma }\times a\text{.}
\end{equation}
\end{lemma}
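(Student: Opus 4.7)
The plan is to derive the two assertions of the lemma by (a) taking the divergence of \eqref{1.1} throughout $\Omega$ to obtain the interior equation for $p$, and (b) taking the normal component of \eqref{1.1} along $\Gamma$ to read off $\partial_\nu p$, using the two previously established identities (Lemmas \ref{lems1} and \ref{lems2}) to rewrite each of the three surviving boundary terms.

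For the interior equation, I would apply $\nabla\cdot$ to \eqref{1.1}. The time-derivative term gives $\partial_t(\nabla\cdot u)=0$ by \eqref{1.2}; the viscous term gives $\mu\Delta(\nabla\cdot u)=0$ for the same reason; and the pressure term gives $\Delta p$. What remains on the right-hand side is exactly $-\nabla\cdot(u\cdot\nabla u)$, which is the first claim.

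For the boundary condition, I would evaluate $\langle\text{\eqref{1.1}},\nu\rangle$ on $\Gamma$, producing
\[
\partial_\nu p\big|_\Gamma=-\langle\partial_t u,\nu\rangle-\langle u\cdot\nabla u,\nu\rangle+\mu\langle\Delta u,\nu\rangle.
\]
Each term is then rewritten using boundary information. First, since $\nu$ is time-independent and $u^\bot=\langle u,\nu\rangle=0$ on $\Gamma$ by the kinematic condition \eqref{kinematic-1}, differentiating in $t$ gives $\langle\partial_t u,\nu\rangle=0$ on $\Gamma$. Second, taking $w=u$ in the special case \eqref{june21-052} of Lemma \ref{lems1} yields $\langle u\cdot\nabla u,\nu\rangle=-\pi(u,u)$. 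Third, applying Lemma \ref{lems2}(i) to $d=\nabla\cdot u=0$ gives $\langle\Delta u,\nu\rangle=-\nabla^\Gamma\times\omega^\Vert$, and substituting the vorticity condition \eqref{vorticity-1} turns this into $-\nabla^\Gamma\times a$. Assembling the three contributions produces exactly $\pi(u,u)-\mu\,\nabla^\Gamma\times a$.

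I do not anticipate a serious obstacle: the entire argument consists of applying $\nabla\cdot$ and $\langle\cdot,\nu\rangle$ to \eqref{1.1} and invoking the two structural lemmas of Section 2. The only subtlety worth double-checking is the sign in $\langle\Delta u,\nu\rangle=-\nabla^\Gamma\times\omega^\Vert$, which comes from Lemma \ref{lems2}(i) with $d\equiv 0$, and the fact that one really may use \eqref{june21-052} because both arguments of $u\cdot\nabla u$ satisfy the kinematic condition on $\Gamma$.
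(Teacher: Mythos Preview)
Your proposal is correct and follows essentially the same route as the paper: take the divergence of \eqref{1.1} for the interior equation, take the normal component on $\Gamma$ for the boundary condition, and evaluate the three surviving terms using \eqref{kinematic-1} (for $\langle\partial_t u,\nu\rangle=0$), Lemma~\ref{lems1} via \eqref{june21-052} (for $\langle u\cdot\nabla u,\nu\rangle=-\pi(u,u)$), and Lemma~\ref{lems2}(i) with $d\equiv 0$ (for $\langle\Delta u,\nu\rangle=-\nabla^\Gamma\times a$). The only cosmetic difference is that the paper does not spell out the divergence step and absorbs the vanishing of $\langle\partial_t u,\nu\rangle$ into the phrase ``since $u^\bot|_\Gamma=0$''.
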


\begin{proof}
Since $u(t,x)$ is a smooth solution in $\Omega_T$, by taking the
normal part of equations \eqref{1.1},
we may determine the normal derivative of the pressure $p$. Indeed,
since $\left. u^{\bot}\right|_{\Gamma}=0$, equations (\ref{1.1})
imply that
\begin{equation*}
\left( (u\cdot \nabla) u\right)^{\bot} =\mu\left(\Delta
u\right)^{\bot}-(\nabla p)^{\bot}\qquad\text{on the
boundary}\,\,\Gamma.
\end{equation*}
We apply Lemmas \ref{lems1}--\ref{lems2} to obtain
\begin{eqnarray*}
\left. \partial_\nu p\right|_{\Gamma}
&=&\mu\langle\Delta u,\, \mathbf{\nu}\rangle -\langle u\cdot\nabla u,\, \mathbf{\nu}%
\rangle  \notag \\
&=&\mu \partial_\nu(\nabla\cdot u)-\mu\langle\nabla\times\omega,\,
\mathbf{\nu}\rangle
+\pi(u,u)  \notag \\
&=&\pi(u,u)-\mu\nabla^{\Gamma}\times a.
\label{nssedq1}
\end{eqnarray*}
\end{proof}

\begin{remark} Similarly, if $u(t,x)$ is a smooth solution of the Euler equations
\eqref{1.4}--\eqref{1.5}
in $\Omega_T$ satisfying the kinematic condition
\eqref{kinematic-1},
then the pressure $p$ is determined by the Neumann problem of the
Poisson equation:
\begin{equation}
\Delta p=-\nabla\cdot\left(u\cdot\nabla u\right), \qquad \left.
\partial_\nu p\right|_{\Gamma}=\pi (u,u).
\label{pressure-euler-1}
\end{equation}
\end{remark}

\subsection{Reduction to an IBVP with absolute boundary conditions via
the Stokes equations}

We will employ the solution of the unsteady Stokes equations to
reduce our initial-boundary value problem into an initial-boundary
value problem with absolute boundary conditions. More precisely, let
$u$ be a solution of the initial-boundary value problem of the
Navier-Stokes equations (\ref{1.1}) with nonhomogeneous boundary
conditions \eqref{kinematic-1}--\eqref{vorticity-1}
and initial condition \eqref{1.3}.
Let $w$ be the solution of the nonhomogeneous initial-boundary
problem for the unsteady Stokes equations:
\begin{equation}
\left\{
\begin{array}{ll}
\partial_t w=\mu \Delta w-\nabla q, \qquad \nabla\cdot w=0,\\
\left. w^{\bot}\right|_{\Gamma}=0, \qquad\left. \left(\nabla\times
w\right)^{\Vert}\right|_{\Gamma }=a,\\
w|_{t=0}=u_{0}.
\end{array}
\right. \label{in-stokes}
\end{equation}
Then $v=u-w$ solves the following homogeneous initial-boundary value
problem:
\begin{eqnarray}
&&\partial_t v + \left(v+w\right)\cdot\nabla \left( v+w\right)
=\mu \Delta v-\nabla p,\label{ns-2a}\\
&& \nabla\cdot v=0, \label{ns-2b}\\
&&v|_{t=0}=0,\label{ns-2c}
\end{eqnarray}
subject to the absolute boundary condition:
\begin{equation}\label{abs-2}
v^{\bot}=0, \qquad (\nabla\times v)^{\Vert}=0.
\end{equation}

Therefore, if the initial-boundary value problem (\ref{in-stokes})
is well-posed with some appropriate estimates of its solution, then
the well-posedness for the nonhomogeneous initial-boundary value
problem \eqref{1.1}--\eqref{1.3} and
\eqref{kinematic-1}--\eqref{vorticity-1} is equivalent to the
well-posedness of the homogeneous initial-boundary value problem
\eqref{ns-2a}--\eqref{abs-2}.

\subsection{Well-posedness of the Stokes equations and $L^2$-estimates for the
solutions}

The initial-boundary value problem (\ref{in-stokes}) can be solved
as follows. Taking the divergence in (\ref{in-stokes}), we can
decouple the scaler function $q$ by solving the Neumann boundary
problem:
\begin{equation}
\Delta q=0, \qquad \partial_\nu q|_{\Gamma}=-\mu \nabla^{\Gamma
}\times a, \label{decop-par}
\end{equation}
where the normal derivative of $q$ follows from the kinematic
condition in \eqref{in-stokes} and $\nabla\cdot w=0$, which implies
from Lemma \ref{lems2} that
\begin{eqnarray*}
\partial_\nu q|_{\Gamma}=\mu \langle \Delta u, \nu\rangle
=-\mu \nabla^{\Gamma}\times \left(\nabla\times w\right)^{\Vert}
=-\mu \nabla^{\Gamma }\times a.
\end{eqnarray*}
Since $\int_{\Gamma}\nabla^{\Gamma}\times a\, dS=0$, there exists a
unique solution $q$ modulo a constant. We renormalize $q$ so that
$\int_{\Omega} q\,dx =0$. Therefore, it suffices to solve the linear
parabolic equations:
\begin{equation}
\left\{
\begin{array}{ll}
\partial_t w-\mu \Delta w=-\nabla q,\\
\left. w^{\bot}\right|_{\Gamma}=0, \qquad\left. \left( \nabla
\times w\right)^{\Vert }\right|_{\Gamma }=a,\\
w|_{t=0}=u_{0},
\end{array}
\right.   \label{linear-p}
\end{equation}
where $q$ is given by (\ref{decop-par}), which is well-posed.
In fact,
by taking any smooth vector field $A$ such that $\left. A^{\bot
}\right|_{\Gamma }=0$ and $\left. (\nabla \times A)^{\Vert
}\right|_{\Gamma }=a$, and setting $v:=w-A$, then the linear
parabolic problem can be written as
\begin{equation*}
\left\{
\begin{array}{ll}
\frac{\partial }{\partial t}v=\mu \Delta v+f\,,
\\
v^{\bot }=0, \qquad\left(\nabla \times v\right)^{\Vert}=0,
\end{array}
\right.
\end{equation*}
where $f:=\mu\Delta A-\nabla q-\frac{\partial }{\partial t}A$, which
is well-posed; its proof directly follows the arguments in Section
5. Furthermore, all the required estimates on $w$ and $q$ are
available.

\bigskip
We now make some necessary estimates for the solution $w$
of \eqref{in-stokes}.

Taking the curl operation $%
\nabla \times$ in the equations in \eqref{linear-p}, with $g=\nabla
\times w$ and $h =\nabla \times g=-\Delta w $,
\begin{equation*}
\partial_t g=\mu \Delta g; \qquad \partial_t h =\mu \Delta h,
\end{equation*}
together with the boundary conditions:
\begin{equation*}
g^{\Vert}|_{\Gamma}=a,  \qquad\left(\nabla\times
g\right)^{\bot}|_{\Gamma}=\nabla^{\Gamma}\times a,
\end{equation*}
and
\begin{equation*}
h^{\bot}|_{\Gamma}=\nabla^{\Gamma}\times a, \qquad
\left(\nabla\times h \right)^{\Vert}|_{\Gamma}=-\frac{1}{\mu
}\partial_t a.
\end{equation*}

We now make the $L^{2}$-estimates for $g$ and $h$. First note the
following identities:
\begin{equation*}
\partial_t(|g|^{2})=2\mu \langle g,\Delta g\rangle, \qquad
\partial_t(|h|^{2})=2\mu \langle h, \Delta h
\rangle.
\end{equation*}
We integrate over $\Omega$ to obtain
\begin{eqnarray*}
\frac{d}{dt}\|g\|_{2}^{2}&=&-2\mu \int_{\Omega }\langle
g,\nabla \times (\nabla \times g)\rangle\,dx \\
&=&-2\mu \int_{\Omega }\langle \nabla \times g,\nabla \times
g\rangle\,dx +2\mu \int_{\Gamma }\langle (\nabla \times g)^{\Vert
}\times a,\nu \rangle\, dS,
\end{eqnarray*}
which implies
\begin{equation}\label{4.12a}
\|g\|_2^2(t)+2\mu\int_0^t\|\nabla\times g\|_2^2(s)\, ds
=\|g\|_2^2(0)+ 2\mu\int_0^t\int_{\Gamma}\langle (\nabla\times
g)^{\Vert}\times a,\, \nu \rangle\, dS ds \qquad\text{for}\,\,
0<t\le T.
\end{equation}
Similarly, we have
\begin{eqnarray*}
\frac{d}{dt}\|h\|_{2}^{2} &=&-2\mu \int_{\Omega }\langle
h, \nabla \times (\nabla \times h)\rangle\,dx \\
&=&-2\mu \int_{\Omega }\langle \nabla \times h, \nabla \times
h\rangle\,dx -2\int_{\Gamma }\langle (\partial_t a)\times h, \nu
\rangle\, dS,
\end{eqnarray*}
which implies
\begin{equation}\label{4.13a}
\|h\|_2^2(t)+2\mu\int_0^t\|\nabla\times h\|_2^2(s)\,ds
=\|h\|_2^2(0)- 2\int_0^t\int_{\Gamma}\langle (\partial_t a)\times
h,\, \nu \rangle\, dSds \qquad\text{for}\,\, 0<t\le T.
\end{equation}

We now consider two different cases.

\bigskip
{\bf Case 1: $\partial_t a=0$ and $\sqrt{\mu}a\in L^2(\Gamma_T)$}.
In this case, we obtain from \eqref{4.13a} that
$$
\|h\|_2^2(t)+2\mu\int_0^t\|\nabla\times h\|_2^2(s)\,ds
=\|h\|_2^2(0)\le \|w(0,\cdot)\|_{H^2} =\|u_0\|_{H^2}.
$$
Since $\nabla\cdot g=\nabla\cdot h=0$, we conclude
\begin{equation}\label{4.14a}
\|\nabla g\|_{L^2(\Omega)}^2(t)+\mu\|\nabla^2g\|_{L^2(\Omega_T)}^2
\le C \qquad \text{for}\,\, t\in [0, T],
\end{equation}
where $C>0$ is independent of $\mu$.

Then we have
\begin{eqnarray*}
&&2\mu \Big|\int_0^t\int_\Gamma\langle (\nabla\times
g)^{\Vert}\times a,
\nu\rangle\, dS ds\Big|\\
&&\le 2\mu \Big(\varepsilon \int_0^t\int_\Gamma |\nabla g|^2\, dSds
     + C_\varepsilon\|a\|_{L^2(\Gamma_T)}^2\Big)\\
&&\le \mu\int_0^t\int_\Omega|\nabla\times g|^2dx
   +C\mu \|\nabla^2
g\|_{L^2(\Omega_T)}^2+\mu\|a\|_{L^2(\Omega_T)}^2\\
&&\le \mu\int_0^t\int_\Omega|\nabla\times g|^2dx
  +C\|u_0\|_{H^2(\Omega)}^2+\mu\|a\|_{L^2(\Gamma_T)}^2,
\end{eqnarray*}
where we have used the interpolation inequality in the second
inequality above.

Substitution this into \eqref{4.12a} yields
\begin{equation}\label{4.14b}
\|g\|_{L^2(\Omega)}^2(t)+\mu \|\nabla g\|_{L^2(\Omega_t)}^2 \le
C\|u_0\|_{H^2(\Omega)}^2+\mu \|a\|_{L^2(\Omega_T)}^2.
\end{equation}

Combining \eqref{4.14a}--\eqref{4.14b} with the fact $\nabla\cdot
w=\nabla\cdot g=\nabla\cdot h=0$, we have
\begin{equation}\label{4.15a}
\|w\|_{H^2(\Omega)}^2(t)+\mu \|w\|_{H^3(\Omega_T)}^2 \le
C\|u_0\|_{H^2(\Omega)}^2+\mu
\|a\|_{L^2(\Omega_T)}^2\qquad\text{for}\,\, t\in [0,T],
\end{equation}
where $C>0$ is independent of $\mu$.

\bigskip
{\bf Case 2: $\partial_t a\ne 0$ and $(a, \partial_ta)\in
L^2(\Gamma_T)$}. In this case, we have
\begin{eqnarray*}
&&2\Big|\int_0^t\int_\Gamma\langle (\partial_t a)\times h,
\nu\rangle\, dS ds\Big|\\
&&\le \varepsilon \int_0^t\int_\Gamma |h|^2\, dSds
     + C_\varepsilon\|\partial_ta\|_{L^2(\Gamma_T)}^2\\
&&\le \mu\int_0^t\int_\Omega|\nabla\times h|^2dx ds
   +\frac{C}{\mu}\int_0^t\int_\Omega |h|^2dxds+C\|\partial_t a\|_{L^2(\Omega_T)}^2,
\end{eqnarray*}
where we have used $\nabla\cdot h=0$ and the interpolation
inequality in the second inequality above.

Substitution this into \eqref{4.13a} yields
$$
\|h\|_2^2(t)+\mu\int_0^t\|\nabla\times h\|_2^2(s)ds \le
\|u_0\|_{H^2(\Omega)}^2 +C\|\partial_ta\|_{L^2(\Gamma_T)}
+\frac{C_0}{\mu}\int_0^t\|h\|_2^2(s)\,ds.
$$
Then the Gronwall inequality yields
$$
\|h\|_2^2(t) \le
M(\mu,T)(\|u_0\|_{H^2(\Omega)}^2+\|\partial_ta\|_{L^2(\Gamma_T)}^2).
$$
Combining this with \eqref{4.12a}, we conclude
\begin{equation}\label{4.16}
\|w\|_{H^2(\Omega)}(t) + \|w\|_{H^3(\Omega_T)}^2 \le
M(\mu,T)(\|u_0\|_{H^2(\Omega)}^2+\|(a,
\partial_ta)\|_{L^2(\Gamma_T)}^2),
\end{equation}
where $M(\mu,T)>0$ depends only on $\mu$ and $T$.

\bigskip
\begin{proposition} \label{prop:4.1}
Let $w\in H^3(\Omega)$ be a solution of problem \eqref{in-stokes}.
Then

{\rm (i)} If $\partial_t a=0$ and $\sqrt{\mu}a\in L^2(\Gamma_T)$,
then there exists $C>0$, independent of $\mu$, such that
\eqref{4.15a} holds;

{\rm (ii)} If $\partial_t a\ne 0$ and $(a, \partial_ta)\in
L^2(\Gamma_T)$, there exists $M=M(\mu,T)>0$ such that \eqref{4.16}
holds.
\end{proposition}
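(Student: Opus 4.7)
The plan is to turn the Stokes system \eqref{in-stokes} into a pair of homogeneous heat equations for the vorticity $g=\nabla\times w$ and its curl $h=\nabla\times g=-\Delta w$ (the last equality uses $\nabla\cdot w=0$), derive the $L^2$ energy identities \eqref{4.12a} and \eqref{4.13a} by standard integration by parts via \eqref{s28-08}, and then reassemble the stated $H^2(\Omega)$ and $H^3(\Omega_T)$ norms on $w$ from estimates on $g$ and $h$ using the reconstruction principle of Lemma \ref{ths28-01} and Corollary \ref{coro-s29-1}. The reduction itself is already justified above: the Neumann problem \eqref{decop-par} for $q$ is solvable because the closed boundary yields $\int_\Gamma \nabla^\Gamma\times a\,dS=0$, and the boundary conditions on $g$ and $h$ are obtained by applying Lemma \ref{lems2} first to $w$ and then to $g$.

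For Case~(i), since $\partial_t a=0$, identity \eqref{4.13a} collapses to
\[
\|h\|_2^2(t)+2\mu\int_0^t\|\nabla\times h\|_2^2(s)\,ds = \|h(0)\|_2^2 \le \|u_0\|_{H^2}^2,
\]
so $h$ enjoys a $\mu$-independent estimate. Using $\nabla\cdot h=0$ together with Lemma \ref{ths28-01} (since $h^{\bot}|_\Gamma=\nabla^\Gamma\times a$ is fixed data), this upgrades to a $\mu$-independent bound on $\sqrt{\mu}\,\|\nabla^2 g\|_{L^2(\Omega_T)}$. The boundary term in \eqref{4.12a} is then handled by Young's inequality and the trace interpolation \eqref{3.8a}, absorbing half of $\mu\|\nabla\times g\|_2^2$ on the left and using the interior bound on $\nabla^2 g$ already obtained; what remains is the $\mu$-independent constant and $\mu\|a\|_{L^2(\Gamma_T)}^2$, delivering \eqref{4.15a}.

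For Case~(ii), the boundary term in \eqref{4.13a} no longer vanishes, and the trace inequality \eqref{3.8a} applied to $|h|$ produces a factor of $\mu^{-1}$ after absorption of a fraction of $\mu\|\nabla\times h\|_2^2$ into the left side; the resulting inequality
\[
\|h\|_2^2(t)+\mu\int_0^t\|\nabla\times h\|_2^2(s)\,ds \le \|u_0\|_{H^2}^2 + C\|\partial_t a\|_{L^2(\Gamma_T)}^2 + \frac{C_0}{\mu}\int_0^t\|h\|_2^2(s)\,ds
\]
is closed by Gronwall, producing a constant of the form $M(\mu,T)$. Feeding this back into \eqref{4.12a} (again absorbing the boundary term) and combining with $\nabla\cdot w=\nabla\cdot g=\nabla\cdot h=0$ yields \eqref{4.16}.

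The main technical obstacle is the last reconstruction step, because Lemma \ref{ths28-01} and Corollary \ref{coro-s29-1} are stated for vector fields obeying homogeneous boundary data, whereas $g$ carries the inhomogeneous condition $g^\Vert|_\Gamma=a$. I would handle this exactly as already suggested in the paragraph after \eqref{linear-p}, namely by fixing a smooth extension $A$ with $A^\bot|_\Gamma=0$ and $(\nabla\times A)^\Vert|_\Gamma=a$, applying the reconstruction to $g-\nabla\times A$ and $w-A$, and controlling the extension norms by $\|a\|_{H^{1/2}(\Gamma)}$ or $\|\partial_t a\|_{L^2(\Gamma)}$ as needed; this is what introduces the explicit $a$-terms on the right-hand sides of \eqref{4.15a} and \eqref{4.16}. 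The secondary delicate point is that, in Case~(i), uniformity in $\mu$ must really come from the $h$-estimate rather than from $g$ alone, because the boundary term in \eqref{4.12a} carries an explicit factor $\mu$ that only combines favorably with the $\mu$-independent interior control on $\nabla^2 g$ obtained via the $h$-equation.
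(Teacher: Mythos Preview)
Your proposal is correct and follows essentially the same route as the paper: derive the heat equations for $g=\nabla\times w$ and $h=\nabla\times g$, obtain the energy identities \eqref{4.12a}--\eqref{4.13a}, treat the two cases separately (the boundary term in \eqref{4.13a} vanishes when $\partial_t a=0$; otherwise use the trace inequality and Gronwall), and reassemble via the divergence-free structure. You are in fact slightly more careful than the paper about the reconstruction step with inhomogeneous data $g^\Vert|_\Gamma=a$, which the paper passes over without comment.
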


\section{A linear initial-boundary value problem}

In this section, we deal with the absolute boundary problem of a
linearized parabolic system.
Let $\beta(t,x), 0\le t\leq T$, be a smooth vector field on $\Omega$
satisfying the absolute boundary condition:
$$
\left. \beta ^{\bot}\right|_{\Gamma}=\left.
(\nabla\times\beta)^{\Vert}\right|_{\Gamma}=0.
$$
Define the pressure function $p_{\beta}(t,x)$ for $0\le t\leq T$ by
solving the Poisson equation:
\begin{equation}
\left\{
\begin{array}{ll}
\Delta p_{\beta}=-\nabla\cdot\big((\beta+w)\cdot\nabla(\beta+w)\big), \\
\left. \partial_\nu p_{\beta}\right|_{\Gamma}=\pi \left(\beta +w,
\beta +w\right),
\end{array}
\right.  \label{ps-01}
\end{equation}
subject to the normalization $\int_{\Omega}p_{\beta}\, dx =0$. Here
and hereafter, the time-variable $t$ is omitted for simplicity in
the equations if no confusion may arise. Thanks to Lemma
\ref{lems1},
\begin{eqnarray*}
\int_{\Omega}\nabla\cdot\big((\beta +w)\cdot\nabla(\beta
+w)\big)\,dx =\int_{\Gamma}\langle (\beta +w)\cdot\nabla (\beta +w),
\mathbf{\nu}\rangle\,dS  =-\int_{\Gamma}\pi \left(\beta +w, \beta
+w\right)\, dS,
\end{eqnarray*}
hence the above boundary value problem (\ref{ps-01}) has a unique
solution. The elliptic estimates yield that $p_{\beta }$ is smooth
both in $t$ and $x$.

\medskip
We now consider the following initial-boundary value problem in
$\Omega$:
\begin{equation}
\partial_t v+(\beta+w)\cdot \nabla (v+w)=\Delta v-\nabla
p_{\beta},\label{ps-02}
\end{equation}
subject to the initial-boundary conditions:
\begin{eqnarray}
&&v|_{t=0}=0, \label{ps-in-1}\\
&&\left. v^{\bot}\right|_{\Gamma}=\left.
\omega^\Vert\right|_{\Gamma}=0. \label{ps-bd-1}
\end{eqnarray}
In this section and Section 6, we denote $\omega=\nabla\times v$
again without confusion and always set $\mu=1$ without loss of
generality for the existence proof.

\medskip
To solve problem \eqref{ps-02}--\eqref{ps-bd-1}, we recall that the
Laplacian $\Delta$ acting on vector fields in
\begin{equation*}
D(\Delta)=\big\{ v\in H^{2}(\Omega;\R^3)\, :\,
v^{\bot}\big|_{\Gamma}=\omega^{\Vert}\big|_{\Gamma}=0\big\}
\end{equation*}
is self-adjoint on $L^{2}(\Omega)$ and is negative-definite:
\begin{eqnarray*}
(\Delta v, v)=-\int_{\Omega}|\nabla\cdot
v|^{2}\,dx-\int_{\Omega}|\omega|^{2}\,dx \leq 0
\end{eqnarray*}
for every $v\in D(\Delta)$. The Laplacian with domain $D(\Delta)$
will be still denoted by $\Delta$ for simplicity.
According to Hille-Yosida's theorem, $\Delta$ is the infinitesimal
generator of a strongly continuous semigroup of contractions on
$L^{2}(\Omega)$, denoted by $\left( e^{-t\Delta}\right)_{t\geq 0}$.
Indeed, $\Delta$ is the infinitesimal generator of an analytic
semigroup, and the $L^{2}$-domain $D(\Delta)$ of $\Delta$ is
invariant under $e^{-t\Delta}$. In addition, for each $t$,
$e^{-t\Delta}$ commutes with the curl operation $\nabla \times $ and
the divergence operator $\nabla\cdot$ (see \cite{conner1956} for the
details).

For every $f\in D(\Delta)$, $\varphi(t)=e^{-t\Delta}f$ solves the
following evolution equation:
\begin{equation*}
\partial_t\varphi=\Delta\varphi, \qquad \varphi(0,\cdot)=f.
\end{equation*}
While, the regularity theory for parabolic equations yields that, if
$f$ is smooth, so is $\varphi$.

\medskip
Our next aim is to solve the homogenous equation:
\begin{equation*}
\partial_t\varphi=\big(\Delta-\theta\cdot\nabla\big)\varphi,\qquad
\varphi|_{t=0}=f,
\end{equation*}
where $f\in D(\Delta)$, and $\theta$ is a smooth vector field on
$\Omega$ (independent of $t$).
To this end, we show that $A=\Delta-\theta\cdot\nabla$ with domain
$D(A)=D(\Delta)$ is the generator of a strongly-continuous
semigroup.

\begin{lemma}\label{lems4} Under the above notations, $(A,
D(\Delta))$ is a densely defined, closed operator, which is indeed
the infinitesimal generator of an analytic semigroup on
$L^{2}(\Omega)$.
\end{lemma}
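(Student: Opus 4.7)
The plan is to view $A = \Delta - \theta\cdot\nabla$ as a relatively bounded perturbation of the self-adjoint operator $\Delta$ with the absolute boundary conditions, and then invoke the standard perturbation theorem for generators of analytic semigroups (cf.\ Pazy, Chapter 3). The unperturbed operator $\Delta$ on $D(\Delta)$ is already known, from the discussion just before the lemma, to be self-adjoint and non-positive on $L^{2}(\Omega)$; consequently it generates a contraction analytic semigroup of angle $\pi/2$. Since $\theta$ is smooth on $\overline{\Omega}$, hence bounded, the perturbation $B := -\theta\cdot\nabla$ is a first-order differential operator with bounded coefficients. The entire task reduces to showing that $B$ is $\Delta$-bounded with relative bound $0$, i.e.\ for every $\varepsilon>0$ there exists $C_\varepsilon>0$ such that
\begin{equation*}
\|Bv\|_{2} \le \varepsilon\,\|\Delta v\|_{2} + C_\varepsilon\,\|v\|_{2}
\qquad \text{for all } v\in D(\Delta).
\end{equation*}

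The key input is Corollary \ref{coro-s29-1}(iv), which says that, for $v\in D(\Delta)$, the quantity $\|(\nabla d,\nabla\times\omega,v)\|_{2}$ is an equivalent norm for $H^{2}(\Omega)$. Since $\Delta v = \nabla d - \nabla\times\omega$ by identity (ii) of the same corollary, and since $\nabla d$ and $\nabla\times\omega$ can be controlled by $\|\Delta v\|_{2}$ (using the orthogonality $\nabla\cdot(\nabla\times\omega)=0$ together with Lemma \ref{ths29-01}), one obtains
\begin{equation*}
\|v\|_{H^{2}} \le C\bigl(\|\Delta v\|_{2} + \|v\|_{2}\bigr).
\end{equation*}
Combining this with the Ehrling--Nirenberg--Gagliardo interpolation inequality $\|\nabla v\|_{2}\le \eta\,\|v\|_{H^{2}} + C_\eta\,\|v\|_{2}$ and the bound $\|\theta\|_{\infty}<\infty$, we arrive at
\begin{equation*}
\|Bv\|_{2} \le \|\theta\|_{\infty}\|\nabla v\|_{2}
 \le \varepsilon\,\|\Delta v\|_{2} + C_\varepsilon\,\|v\|_{2},
\end{equation*}
which is precisely $\Delta$-boundedness with relative bound $0$.

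From here the conclusions are immediate: because $\Delta$ is closed on $D(\Delta)$ and $B$ is $\Delta$-bounded with relative bound less than $1$, $A=\Delta+B$ is closed on $D(A)=D(\Delta)$; density of $D(\Delta)$ in $L^{2}(\Omega)$ is already known from the self-adjointness of $\Delta$. The perturbation theorem for analytic semigroups (e.g.\ Pazy, \emph{Semigroups of Linear Operators and Applications to Partial Differential Equations}, Ch.~3, Cor.~2.4) then yields that $A$ generates an analytic semigroup on $L^{2}(\Omega)$. The only delicate point in this plan is the derivation of the $H^{2}$-control by $\|\Delta v\|_{2}+\|v\|_{2}$ under the mixed-type absolute boundary conditions, and this obstacle has already been overcome in Section~3; once it is in hand, the present lemma is a straightforward application of abstract perturbation theory.
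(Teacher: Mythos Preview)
Your proposal is correct and follows the same overall strategy as the paper: establish that $B=-\theta\cdot\nabla$ is $\Delta$-bounded with relative bound $0$ and then invoke the perturbation theorem for analytic semigroups (the paper cites Pazy, Theorem~2.1, p.~80). The only tactical difference is in how the key estimate $\|\nabla v\|_{2}\le \varepsilon\|\Delta v\|_{2}+C_\varepsilon\|v\|_{2}$ is obtained: the paper re-derives it directly from the Bochner identity, the boundary computation $\frac{1}{2}\partial_\nu|v|^{2}=-\pi(v,v)$ under the absolute conditions, and the trace inequality~\eqref{3.8a}, whereas you route through the full $H^{2}$-control of Corollary~\ref{coro-s29-1} (your orthogonality argument for separating $\nabla d$ and $\nabla\times\omega$ is valid, since $(\nabla\times\omega)^{\bot}|_{\Gamma}=0$ by Corollary~\ref{coro-s29-1}(iii)) and then interpolate down. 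Your path reuses Section~3 more efficiently; the paper's path is slightly more self-contained and only needs $H^{1}$ rather than $H^{2}$ information. Either way the lemma follows.
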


\begin{proof}
If a vector field $\varphi$ satisfies the absolute boundary
conditions, then
\begin{eqnarray*}
\int_\Omega|\nabla\varphi|^2\,dx
&=&\frac{1}{2}\int_\Omega\Delta(|\varphi|^2)\, dx-\int_\Omega\langle
\Delta\varphi,\varphi\rangle\, dx\\
&=&\frac{1}{2}\int_\Gamma\partial_\nu(|\varphi|^2)\,
dS-\int_\Omega\langle\Delta\varphi,
\varphi\rangle\, dx\\
&=&-\int_\Gamma\pi(\varphi,
\varphi)\, dS-\int_\Omega\langle\Delta\varphi, \varphi\rangle\rangle\, dx\\
&\le&\int_\Gamma |\varphi|^2\, dS-\int_\Omega \langle<\Delta\varphi,
\varphi\rangle\, dx\\
&\le& \varepsilon
\|\Delta\varphi\|_2^2+C\|\varphi\|_2^2+\|\Delta\varphi\|_2\|\varphi\|_2,
\end{eqnarray*}
that is,
$$
\|\nabla\varphi\|_2^2\le
C\big(\|\varphi\|_2^2+\|\Delta\varphi\|_2\|\varphi\|_2\big).
$$
Hence, we have
\begin{eqnarray*}
\|(\theta\cdot\nabla)\varphi\|^{2} \leq \|\theta\|_{\infty}^{2}
\|\nabla \varphi\|^{2}\leq C\|\theta\|_{\infty}^{2}
\big(\|\varphi\|_2^2+\|\Delta\varphi\|_2\|\varphi\|_2\big)
\le C \|\theta\|_\infty^2\|\varphi\|^2_2+\varepsilon
\|\Delta\varphi\|_2^2,
\end{eqnarray*}
for any $\varepsilon \in (0,1)$, where $C(\varepsilon,
\|\theta\|_{\infty}^{2})$ is a constant depending only on
$\varepsilon$ and $\|\theta\|_{\infty}^{2}$. According to Kato's
perturbation theorem (for example, see Theorem 2.1, page 80 in \cite
{pazy1983}), all the conclusions follow.
\end{proof}

The analytic semigroup with infinitesimal generator
$\Delta-\theta\cdot\nabla$ with domain $D(\Delta)$ is denoted by
$\big(e^{t(\Delta_{M}-\theta\cdot\nabla)}\big)_{t\geq 0}$.

\medskip
Next, we want to solve
the nonhomogeneous evolution equation
\begin{equation*}
\partial_t\varphi=\big(\Delta-(\beta+w)\cdot\nabla \big)\varphi
 +h, \qquad \varphi(0,\cdot)=f,
\end{equation*}
where $\beta(t,x)$ is a smooth vector field and satisfies the
absolute boundary conditions such that
\begin{equation*}
\|\beta(t)-\beta(s)\|_{\infty}\leq C|t-s|^{\alpha}
\end{equation*}
for some constant $C$ and $\alpha \in (0,1]$, $w(t,x)$ is the unique
solution of \eqref{in-stokes}, $h(t, \cdot)\in L^{2}(\Omega)$ for
each $t\in [0,T]$, and $f\in D(\Delta)$.

\begin{lemma}\label{lemrg01}
Let
\begin{equation*}
\rho=\sup_{0\le t\leq T}\left\{\int_{\Omega}\langle\big(\Delta
-(\beta+w)(t,\cdot)\cdot\nabla\big)\psi, \psi\rangle\, :\, \psi\in
D(\Delta_{M}) \text{ and } \|\psi\|=1\right\}.
\end{equation*}
Then $\rho <\infty $. If $\psi\in D(\Delta)$ satisfies the Poisson
equation:
\begin{equation*}
\big(\Delta -(\beta+w)(t,\cdot)\cdot\nabla -\rho-1\big)\psi=\varphi,
\qquad t\leq T,
\end{equation*}
and $\varphi\in L^{2}(\Omega)$, then $\|\psi\|\leq \|\varphi\|$ and
$\|\nabla \psi\|\leq C\|\varphi\|$ for some constant $C$.
\end{lemma}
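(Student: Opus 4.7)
The plan is to handle $\rho<\infty$ by a direct energy computation on the quadratic form, and then to extract both estimates on $\psi$ from the same basic identity by testing the equation against $\psi$.

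First, fix $\psi\in D(\Delta)$ with $\|\psi\|=1$ and split
\begin{equation*}
\int_\Omega\langle(\Delta-(\beta+w)\cdot\nabla)\psi,\psi\rangle\,dx
=\int_\Omega\langle\Delta\psi,\psi\rangle\,dx-\int_\Omega\langle(\beta+w)\cdot\nabla\psi,\psi\rangle\,dx.
\end{equation*}
For the Laplacian piece I would apply formula \eqref{07june03-04} of Lemma \ref{les29-001}. Under the absolute boundary conditions both surface integrals there vanish: $\psi^\bot|_\Gamma=0$ kills $(\nabla\cdot\psi)\langle\psi,\nu\rangle$, while on $\Gamma$ the vector $\psi\times\omega$ is orthogonal to $\psi=\psi^\Vert$ (tangential) and to $\omega=\omega^\bot$ (normal), hence itself tangential, so $\langle\psi\times\omega,\nu\rangle=0$. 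This gives $\int_\Omega\langle\Delta\psi,\psi\rangle\,dx=-\|\omega\|_2^2-\|\nabla\cdot\psi\|_2^2\le 0$. For the convection piece I would write $\langle(\beta+w)\cdot\nabla\psi,\psi\rangle=\tfrac12(\beta+w)\cdot\nabla|\psi|^2$, integrate by parts, and discard the surface integral thanks to $(\beta+w)^\bot|_\Gamma=0$; using also $\nabla\cdot w=0$,
\begin{equation*}
-\int_\Omega\langle(\beta+w)\cdot\nabla\psi,\psi\rangle\,dx=\tfrac12\int_\Omega(\nabla\cdot\beta)|\psi|^2\,dx\le\tfrac12\|\nabla\cdot\beta\|_{L^\infty([0,T]\times\Omega)}.
\end{equation*}
Since $\beta$ is smooth on $[0,T]\times\overline{\Omega}$, the right-hand side is finite uniformly in $t\in[0,T]$ and in unit vectors $\psi$, so $\rho<\infty$.

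For the two estimates on $\psi$, I would take the $L^2$ inner product of $(\Delta-(\beta+w)\cdot\nabla-\rho-1)\psi=\varphi$ with $\psi$. By the very definition of $\rho$, the quadratic form term is bounded above by $\rho\|\psi\|^2$, so
\begin{equation*}
-\|\psi\|^2\ge\int_\Omega\langle\varphi,\psi\rangle\,dx\ge-\|\varphi\|\,\|\psi\|,
\end{equation*}
yielding $\|\psi\|\le\|\varphi\|$. Solving the same identity for the dissipative terms,
\begin{equation*}
\|\omega\|_2^2+\|\nabla\cdot\psi\|_2^2=-\int_\Omega\langle\varphi,\psi\rangle\,dx+\tfrac12\int_\Omega(\nabla\cdot\beta)|\psi|^2\,dx-(\rho+1)\|\psi\|^2,
\end{equation*}
and combining with $\|\psi\|\le\|\varphi\|$ gives $\|(\omega,\nabla\cdot\psi)\|_2\le C\|\varphi\|$. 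Since $\psi^\bot|_\Gamma=0$, Lemma \ref{ths28-01} (taking $\Gamma_1=\emptyset$, $\Gamma_2=\Gamma$) and estimate \eqref{07june06-01} then yield $\|\psi\|_{H^1}\le C\|(\omega,\nabla\cdot\psi,\psi)\|_2\le C'\|\varphi\|$, which in particular bounds $\|\nabla\psi\|$.

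The one point that requires care is verifying that both surface integrals in \eqref{07june03-04} truly vanish under the absolute boundary conditions; the crucial geometric observation is that in three dimensions the cross product of a tangential field and a normal field is again tangential. Everything else is a standard energy argument combined with Lemma \ref{ths28-01}.
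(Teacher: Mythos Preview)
Your argument is correct. The first estimate $\|\psi\|\le\|\varphi\|$ is obtained exactly as in the paper, by pairing the equation with $\psi$ and invoking the definition of $\rho$.

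For the gradient bound $\|\nabla\psi\|\le C\|\varphi\|$, however, you take a different route from the paper. The paper works directly with the Bochner identity $\int|\nabla\psi|^2=\tfrac12\int_\Gamma\partial_\nu|\psi|^2-\int\langle\Delta\psi,\psi\rangle$, converts the surface term into $-\int_\Gamma\pi(\psi,\psi)$ via Lemma~\ref{lems2}(ii), and then absorbs this boundary contribution using the trace inequality \eqref{3.8a}. You instead exploit the fact, already recorded just before Lemma~\ref{lems4}, that under the absolute boundary conditions both surface integrals in \eqref{07june03-04} vanish, so $-\int\langle\Delta\psi,\psi\rangle=\|\omega\|_2^2+\|\nabla\cdot\psi\|_2^2$ exactly. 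This lets you read off a bound on $\|(\omega,\nabla\cdot\psi)\|_2$ from the same pairing, and then Lemma~\ref{ths28-01} converts it into the full $H^1$-bound. Your approach is cleaner in that it does not re-run a trace argument; the trace inequality is already packaged inside Lemma~\ref{ths28-01}. The paper's approach, on the other hand, is slightly more self-contained at this spot and does not need to appeal back to Section~3. Both are valid.

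A small remark on your treatment of $\rho<\infty$: your integration-by-parts computation for the convection term, yielding $\tfrac12\int(\nabla\cdot\beta)|\psi|^2$, is in fact more precise than the paper's one-line bound $\int\langle(\Delta-(\beta+w)\cdot\nabla)\psi,\psi\rangle\le\|(\beta+w)(t,\cdot)\|_\infty\|\psi\|^2$, which as written would require control of $\|\nabla\psi\|$ if one used Cauchy--Schwarz directly. Your version makes the finiteness of $\rho$ transparent.
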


\begin{proof} It is easy to see that
\begin{equation*}
\int_{\Omega}\langle\big(\Delta-(\beta+w)(t,\cdot)\cdot\nabla\big)\psi,
\psi\rangle\,dx \leq
\|(\beta+w)(t,\cdot)\|_{\infty}\int_{\Omega}|\psi|^{2}\,dx
\end{equation*}
so that $\rho <\infty$. Since
\begin{eqnarray*}
-\int_{\Omega}\langle \varphi, \psi\rangle\,dx =\int_{\Omega}\langle
\big(-\Delta +(\beta+w)(t,\cdot)\cdot\nabla +\rho +1\big)\psi,
\psi\rangle\, dx \geq \|\psi\|^2,
\end{eqnarray*}
which gives the first estimate in the lemma. Using the Bochner
identity \eqref{Bochner}, we have
\begin{eqnarray*}
\int_\Omega|\nabla\psi|^2\,
dx&=&\frac{1}{2}\int_\Omega\Delta(|\psi|^2)\,dx-\int_\Omega\langle
\Delta\psi, \psi\rangle\,dx
=\frac{1}{2}\int_\Gamma\partial_\nu(|\psi|^2)\,
dS-\int_\Omega\langle\Delta\psi,
\psi\rangle\,dx\\
&=&-\int_\Gamma\pi(\psi, \psi)\,dS-\int_\Omega\langle\Delta\psi,
\psi\rangle\,dx\\
&=&\int_\Gamma|\psi|^2\,dS -\int_\Omega\langle\varphi
+\big((\beta+w)(t,\cdot)\cdot\nabla+\rho+1\big)\psi,
\psi\rangle\,dx\\
&\le& \varepsilon\|\nabla\psi\|_2^2+C\|\varphi\|^2_2
 +\|\varphi\|_2\|\psi\|_2+\|(\beta+w)(t,\cdot)\|_\infty\|\nabla\psi\|_2\|\psi\|_2
 +(\rho+1)\|\psi\|_2^2,
\end{eqnarray*}
which implies
\begin{equation*}
\|\nabla \psi\|\leq C\|\varphi\|.
\end{equation*}
\end{proof}

\begin{lemma}\label{lems6}
Let $\beta(t,x), 0\le t\leq T,$ be a smooth vector field on $\Omega$
satisfying the absolute boundary conditions \eqref{absolute-1}.
Suppose that
\begin{equation*}
\|(\beta+w)(t,\cdot)\|_{\infty}\leq \rho \qquad \text{for every }%
t\in \lbrack 0,T]
\end{equation*}
for some non-negative constant $\rho $. Let
\begin{equation*}
A(t)=\Delta -(\beta+w)(t,\cdot)\cdot\nabla -(\rho +1)I
\end{equation*}
with domain $D(A(t))=D(\Delta)$. Then, for each $t$, $A(t)$ is the
infinitesimal generator of the strongly continuous semigroup
$\{e^{-(\rho
+1)s}e^{s(\Delta-(\beta+w)(t,\cdot)\cdot\nabla)}\}_{s\geq 0}$ of
contractions on $L^{2}(\Omega)$. Moreover, for every $s,t$, and
$\tau$ in $[0,T]$, we have
\begin{equation*}
\|\big(A(t)-A(s)\big)A(\tau )^{-1}\|\leq C\|(\beta+w)(t,\cdot
)-(\beta+w)(s,\cdot)\|_{\infty}
\end{equation*}
for some constant $C$.
\end{lemma}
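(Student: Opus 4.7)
My plan is to handle the two assertions separately, using Lemma \ref{lems4} (analytic semigroup for $B(t):=\Delta-(\beta+w)(t,\cdot)\cdot\nabla$) and Lemma \ref{lemrg01} (gradient estimate for the associated Poisson equation) as the key inputs. For the first claim, Lemma \ref{lems4} applied with $\theta=(\beta+w)(t,\cdot)$ shows that $B(t)$ with domain $D(\Delta)$ is the infinitesimal generator of an analytic semigroup $\{e^{sB(t)}\}_{s\ge 0}$ on $L^{2}(\Omega)$. Since $-(\rho+1)I$ is a bounded perturbation, $A(t)=B(t)-(\rho+1)I$ also generates an analytic semigroup, and the product formula $e^{sA(t)}=e^{-(\rho+1)s}e^{sB(t)}$ identifies it with the semigroup appearing in the statement.

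To upgrade this to a semigroup of contractions, I would verify that $A(t)$ is dissipative on $L^{2}(\Omega)$ and appeal to Lumer--Phillips. For $\psi\in D(\Delta)$, the absolute boundary conditions $\psi^{\bot}|_{\Gamma}=(\nabla\times\psi)^{\Vert}|_{\Gamma}=0$ make both boundary integrals in \eqref{07june03-04} vanish, yielding $\langle\Delta\psi,\psi\rangle=-\|\nabla\times\psi\|_{2}^{2}-\|\nabla\cdot\psi\|_{2}^{2}\le 0$. An integration by parts on the convection term, together with $(\beta+w)^{\bot}|_{\Gamma}=0$ (a consequence of the absolute condition on $\beta$ and $w^{\bot}|_{\Gamma}=0$ from \eqref{in-stokes}), gives
\begin{equation*}
\langle(\beta+w)(t,\cdot)\cdot\nabla\psi,\psi\rangle=-\tfrac{1}{2}\int_{\Omega}\big(\nabla\cdot(\beta+w)\big)|\psi|^{2}\,dx,
\end{equation*}
which is bounded by $\tfrac{1}{2}\|\nabla\cdot(\beta+w)\|_{\infty}\|\psi\|_{2}^{2}$. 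Combining, $\mathrm{Re}\,\langle A(t)\psi,\psi\rangle\le\big(\tfrac{1}{2}\|\nabla\cdot(\beta+w)\|_{\infty}-(\rho+1)\big)\|\psi\|_{2}^{2}\le 0$, provided $\rho$ is chosen large enough to absorb the divergence term (a harmless enlargement, as $\beta$ and $w$ are smooth on $[0,T]\times\overline{\Omega}$). This delivers $\|e^{sA(t)}\|\le 1$.

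For the operator-norm bound on $(A(t)-A(s))A(\tau)^{-1}$, the key observation is that
\begin{equation*}
A(t)-A(s)=-\big((\beta+w)(t,\cdot)-(\beta+w)(s,\cdot)\big)\cdot\nabla
\end{equation*}
is a first-order differential operator whose $L^{\infty}$-coefficient is precisely the right-hand side of the claimed inequality. Since $A(\tau)$ is strictly dissipative, $0$ lies in its resolvent set; for $\varphi\in L^{2}(\Omega)$ I would set $\psi:=A(\tau)^{-1}\varphi\in D(\Delta)$, so that $\psi$ solves exactly the Poisson equation of Lemma \ref{lemrg01}, yielding $\|\nabla\psi\|_{2}\le C\|\varphi\|_{2}$. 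Hence
\begin{equation*}
\|(A(t)-A(s))A(\tau)^{-1}\varphi\|_{2}\le\|(\beta+w)(t,\cdot)-(\beta+w)(s,\cdot)\|_{\infty}\|\nabla\psi\|_{2}\le C\|(\beta+w)(t,\cdot)-(\beta+w)(s,\cdot)\|_{\infty}\|\varphi\|_{2}.
\end{equation*}
The technical heart of the argument is the dissipativity step, where the absolute boundary conditions are essential for killing all boundary terms arising both from $\langle\Delta\psi,\psi\rangle$ via Lemma \ref{les29-001} and from the integration by parts on the convection term; once dissipativity is secured, the Lipschitz bound follows directly from Lemma \ref{lemrg01}.
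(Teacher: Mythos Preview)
Your proposal is correct and matches the paper's approach almost exactly: the Lipschitz estimate on $(A(t)-A(s))A(\tau)^{-1}$ is carried out in the paper precisely as you describe, by setting $\psi=A(\tau)^{-1}\varphi$, invoking Lemma~\ref{lemrg01} for $\|\nabla\psi\|_{2}\le C\|\varphi\|_{2}$, and reading off the bound from the first-order difference $-\big((\beta+w)(t)-(\beta+w)(s)\big)\cdot\nabla$.

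The only point of departure is the contraction claim. The paper does not re-derive dissipativity in the proof of Lemma~\ref{lems6}; it relies instead on Lemma~\ref{lemrg01}, where $\rho$ is actually \emph{defined} as the supremum of the quadratic form $\langle(\Delta-(\beta+w)\cdot\nabla)\psi,\psi\rangle$ over unit vectors $\psi\in D(\Delta)$, so that $A(t)=B(t)-(\rho+1)I$ is dissipative by construction and the resolvent bound $\|A(\tau)^{-1}\varphi\|\le\|\varphi\|$ follows immediately. Your explicit computation via integration by parts is fine, but as you correctly observe, it produces $\tfrac12\|\nabla\cdot(\beta+w)\|_{\infty}$ rather than $\|(\beta+w)\|_{\infty}$; the paper sidesteps this by its choice of $\rho$ in Lemma~\ref{lemrg01}, while in the statement of Lemma~\ref{lems6} the symbol $\rho$ is (somewhat loosely) reused for the $L^{\infty}$ bound. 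Either way the required enlargement is harmless, and your argument is sound.
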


\begin{proof}
Let $\psi=A(\tau)^{-1}\varphi$. Then $\psi\in D(\Delta)$ and,
according to Lemma \ref{lemrg01}, $\|\nabla\psi\|\leq C\|\varphi\|$
for some constant $C$. Since
\begin{equation*}
\big(A(t)-A(s)\big)A(\tau)^{-1}\varphi
=\big((\beta+w)(t,\cdot)-(\beta+w)(s,\cdot)\big)\cdot\nabla
\psi
\end{equation*}
so that
\begin{eqnarray*}
\|\big(A(t)-A(s)\big)A(\tau)^{-1}\varphi\| &\leq&
\|(\beta+w)(t,\cdot)-(\beta+w)(s,\cdot)\|_{\infty}\|\nabla\psi\|\\
&\leq&
C\|(\beta+w)(t,\cdot)-(\beta+w)(s,\cdot)\|_{\infty}\|\varphi\|.
\end{eqnarray*}
\end{proof}

Let us retain the notations as above. Assume that
\begin{equation*}
\|(\beta+w)(t,\cdot)-(\beta+w)(s,\cdot)\|_{\infty} \leq
C|t-s|^{\alpha}\qquad \text{for all}\,\, s,t\in \lbrack 0,T].
\end{equation*}
We want to solve the nonhomogeneous evolution equation:
\begin{equation*}
\partial_t \psi(t,\cdot)+ (\beta+w)(t,\cdot)\cdot\nabla \psi(t,\cdot)
=\Delta \psi(t,\cdot)+h(t),
\end{equation*}
where $\psi(t,\cdot)$ satisfies the absolute boundary conditions
\eqref{absolute-1}. The above evolution equation may be rewritten as
\begin{equation*}
\partial_t\Psi(t)=A(t)\Psi(t)+e^{-\left(\rho +1\right)t}h(t),
\end{equation*}
where $A(t)$ is given in Lemma \ref{lems6}, $\psi(t)=e^{(\rho
+1)t}\Psi(t)$, and $\Psi (t)$ satisfies the absolute boundary
condition. According to Theorem 6.1 in \cite{pazy1983}, page 150,
there is a unique evolution system $U(t,s)$ on $0\leq s\leq t\leq T$
such that the mild solution $\Psi(t)$ is given by
\begin{equation*}
\Psi(t)=U(t,0)\psi(0,\cdot)+\int_{0}^{t}e^{-(\rho+1)s}U(t,s)h(s,\cdot)ds
\end{equation*}
so that
\begin{eqnarray*}
\psi(t,\cdot) =e^{(\rho +1)t}U(t,0)\psi(0,\cdot)
+e^{(\rho+1)t}\int_{0}^{t}e^{-(\rho+1)s}U(t,s)h(s,\cdot)ds.
\end{eqnarray*}
Therefore, we have the following theorem.

\begin{theorem}\label{ths1}
Given the initial vector field $f\in D(\Delta)$, $h(t,\cdot)\in
L^{2}(\Omega)$, and $\beta(t,\cdot)$ for $t\in [0, T]$ such that
\begin{equation*}
\left\{\begin{array}{ll}
\beta(t,\cdot)\in D(\Delta_{M}) \quad
\text{for each $t\in [0,
T]$},\\
\sup_{0\le t\le T}\|\beta(t,\cdot)\|_{\infty }<\infty,\\
\|(\beta+w)(t,\cdot)-(\beta+w)(s,\cdot)\|_{\infty}\leq
C|t-s|^{\alpha}\qquad \forall s,t\in \lbrack 0,T],
\end{array}\right.
\end{equation*}
there exists a unique solution $\psi$ which solves the absolute
boundary problem on $\Omega$ of the linear parabolic equation:
\begin{equation*}
\partial_t\psi+(\beta+w)\cdot\nabla\psi=\Delta \psi+h.
\end{equation*}
\end{theorem}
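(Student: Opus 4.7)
My plan is to realize $\psi$ as the mild (and in fact strong) solution of a non-autonomous abstract Cauchy problem on the Hilbert space $L^{2}(\Omega)$, using the framework of evolution systems generated by time-dependent generators (Pazy, Chapter 5). The preparation done in Lemmas \ref{lems4}--\ref{lems6} is precisely the ingredients required by that framework, so the main task is to verify the hypotheses cleanly and then translate back.

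First, perform the exponential shift $\psi(t)=e^{(\rho+1)t}\Psi(t)$, where $\rho$ is the constant from Lemma \ref{lemrg01} chosen so that $\|(\beta+w)(t,\cdot)\|_\infty\le\rho$ for every $t\in[0,T]$. The equation then becomes
$$\partial_t\Psi(t)=A(t)\Psi(t)+e^{-(\rho+1)t}h(t),\qquad \Psi(0)=f,$$
with $A(t)=\Delta-(\beta+w)(t,\cdot)\cdot\nabla-(\rho+1)I$ and common domain $D(A(t))=D(\Delta)$; the absolute boundary conditions are automatically encoded in this domain.

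Next, check the hypotheses of Pazy's evolution-system theorem for the family $\{A(t)\}_{t\in[0,T]}$. Lemma \ref{lems4}, applied pointwise in $t$, shows that $\Delta-(\beta+w)(t,\cdot)\cdot\nabla$ generates an analytic semigroup; after subtracting $(\rho+1)I$, Lemma \ref{lems6} shows each $A(t)$ generates a strongly continuous contraction semigroup on $L^{2}(\Omega)$, and the resolvent-regularity estimate
$$\bigl\|\bigl(A(t)-A(s)\bigr)A(\tau)^{-1}\bigr\|\le C\,\|(\beta+w)(t,\cdot)-(\beta+w)(s,\cdot)\|_\infty\le C'|t-s|^\alpha$$
holds uniformly for $s,t,\tau\in[0,T]$ thanks to the Hölder hypothesis imposed on $\beta+w$. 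These, together with the common domain $D(\Delta)$, are exactly the assumptions of Theorem 6.1 in \cite{pazy1983}, which therefore produces a unique evolution system $U(t,s)$, $0\le s\le t\le T$, and the mild-solution formula
$$\Psi(t)=U(t,0)f+\int_0^t e^{-(\rho+1)s}U(t,s)h(s,\cdot)\,ds,$$
from which $\psi(t)=e^{(\rho+1)t}\Psi(t)$ recovers the sought solution.

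To upgrade from mild to strong and obtain uniqueness, I would invoke analyticity of $e^{\sigma A(t)}$ (Lemma \ref{lems4}) together with $f\in D(\Delta)$ and the local Hölder/$L^{2}$ regularity of the inhomogeneity, which promotes $\Psi$ to the class $C([0,T];D(\Delta))\cap C^{1}([0,T];L^{2}(\Omega))$; then $\psi$ satisfies the PDE pointwise and lies in the domain, so the absolute boundary conditions hold. Uniqueness follows either from the evolution-system representation or, more concretely, by testing the homogeneous difference equation against $\psi_1-\psi_2$ in $L^2$, integrating by parts to produce $-\|\nabla(\psi_1-\psi_2)\|_2^2+\text{boundary terms absorbed via }\pi$ and a transport term bounded by $\|\beta+w\|_\infty\|\psi_1-\psi_2\|_2\|\nabla(\psi_1-\psi_2)\|_2$, and closing by Gronwall.

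The main obstacle is not any single calculation but the pointwise-in-$t$ verification that $\Delta-(\beta+w)(t,\cdot)\cdot\nabla$ behaves like a \emph{bona fide} sectorial generator on the absolute-boundary domain $D(\Delta)$, for which neither the classical Dirichlet nor Neumann coercive theory applies directly; this is exactly why Lemmas \ref{lemrg01} and \ref{lems6} were needed, exploiting the special boundary identity $\int_\Gamma\partial_\nu(|\psi|^2)\,dS=-2\int_\Gamma\pi(\psi,\psi)\,dS$ from Lemma \ref{lems2} on $D(\Delta)$. Once those are in hand, Pazy's abstract machinery delivers existence and uniqueness essentially automatically, while the sharper parabolic apriori estimates required to iterate the mapping $V:\beta\mapsto\psi$ to a fixed point are deferred to Section 6.
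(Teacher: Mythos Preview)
Your proposal is correct and follows essentially the same route as the paper: the exponential shift $\psi=e^{(\rho+1)t}\Psi$, the identification of $A(t)=\Delta-(\beta+w)(t,\cdot)\cdot\nabla-(\rho+1)I$ on the common domain $D(\Delta)$, the verification of the H\"older resolvent condition via Lemma~\ref{lems6}, and the appeal to Pazy's Theorem~6.1 to produce the evolution system $U(t,s)$ and the mild-solution formula are exactly what the paper does in the paragraphs immediately preceding the theorem. Your added remarks on upgrading to a strong solution and the energy-based uniqueness argument go slightly beyond what the paper spells out, but they are consistent with and complement the paper's treatment.
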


\section{Construction of local solutions}

The main goal of this section is to prove the existence of a unique
strong solution, local in time, to the Navier-Stokes equations with
the nonhomogeneous boundary conditions. To this end, we establish
apriori estimate for solutions of the linear parabolic equations
(\ref{ps-02}).

In order to state our results, we use the following norm: For a
vector field $v(t, x), 0\le t\leq T, x\in\Omega$,
\begin{equation*}
\|v(t,\cdot)\|_{\mathbf{N}}=\sqrt{\|v(t,\cdot)\|_{H^2}^{2}+
\|v_{t}(t,\cdot)\|_{H^1}^{2}}.
\end{equation*}

\subsection{Main estimate}

Let $T>0$ be a fixed but arbitrary constant. Let $\beta(t,x), 0\le
t\leq T,$ be a given smooth vector field satisfying the absolute
boundary conditions \eqref{absolute-1}. Let $v=V(\beta)$ is the
unique solution to the linear parabolic equations
\eqref{ps-02}--\eqref{ps-bd-1},
with $p_{\beta}$ is the unique solution to problem \eqref{ps-01}
and $w(t,x)$ the unique solution of the initial-boundary value
problem \eqref{in-stokes} for the Stokes equations. Then the main
\emph{apriori} estimate is given in the following theorem.

\begin{theorem}
\label{them01} Let $\beta(t, x), 0\le t\leq T,$ be a vector field
which satisfies the following conditions:

{\rm (i)} For every $0\le t\leq T$, $\beta (t,\cdot )\in
H^2(\Omega)$ and $\partial_t\beta (t,\cdot)\in H^{1}(\Omega)$;

{\rm (ii)} For each $0\le t\leq T$, $\left.
\beta^{\bot}\right|_{\Gamma}=\left. (\nabla \times
\beta)^{\Vert}\right|_{\Gamma}=0$;

{\rm (iii)} $t\rightarrow
\|\beta(t,\cdot)\|_{H^{2}}^{2}+\|\partial_t\beta
(t,\cdot)\|_{H^{1}}^{2}$ is continuous;

{\rm (iv)} $\beta(0,\cdot)=0$.

Let $v=V(\beta)$ defined by \eqref{ps-01}--\eqref{ps-bd-1}.
Then there exists a constant $C$ depending only on the domain
$\Omega$ such that the following inequality holds:
\begin{eqnarray}
\|v(t,\cdot)\|_{\mathbf{N}}^{2} \leq C\|u_{0}\|_{H^{2}}^{4}e^{CQ(t)}
+C\int_{0}^{t}e^{C(Q(t)-Q(s))}\big(1+\|(\beta,
w)(s,\cdot)\|_{\mathbf{N}}^{2} \big)^2ds, \label{s29-071}
\end{eqnarray}
where
\begin{equation}
Q(t)=\int_{0}^{t}\big(1+
\|(\beta,w)(s,\cdot)\|_{\mathbf{N}}^{2}\big)ds. \label{s03-01}
\end{equation}
\end{theorem}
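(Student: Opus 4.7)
The plan is to derive a differential inequality for $E(t):=\|v(t,\cdot)\|_{\mathbf{N}}^{2}=\|v\|_{H^{2}}^{2}+\|v_{t}\|_{H^{1}}^{2}$ that is in Gronwall form and integrate it. The strategy rests on three ingredients: converting $H^{2}$-information on $v$ into $L^{2}$-information on $\Delta v$ via Corollary~\ref{coro-s29-1}(iv), running energy estimates both on \eqref{ps-02} and on its time-derivative, and closing the nonlinear contributions by the Sobolev algebra property of $H^{2}(\Omega)$ in $\R^{3}$. Since $v(t,\cdot)$ satisfies the absolute boundary conditions \eqref{absolute-1} for every $t\in[0,T]$, so does $v_{t}(t,\cdot)$; Corollary~\ref{coro-s29-1}(iv) together with Lemma~\ref{ths29-01} therefore gives
$$\|v\|_{H^{2}}^{2}\leq C\bigl(\|\Delta v\|_{2}^{2}+\|v\|_{2}^{2}\bigr),\qquad \|v_{t}\|_{H^{1}}^{2}\leq C\bigl(\|\nabla\times v_{t}\|_{2}^{2}+\|\nabla\cdot v_{t}\|_{2}^{2}+\|v_{t}\|_{2}^{2}\bigr).$$
Using \eqref{ps-02} to write $\Delta v=v_{t}+(\beta+w)\cdot\nabla(v+w)+\nabla p_{\beta}$, it is enough to control $\|v\|_{2}$, $\|v_{t}\|_{2}$, $\|\nabla v_{t}\|_{2}$ together with the convection and pressure norms.

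Differentiating \eqref{ps-02} in $t$, testing against $v_{t}$, and applying Lemma~\ref{les29-001} to $v_{t}$ (whose boundary contributions vanish by \eqref{absolute-1}) yields
$$-\int_{\Omega}\langle\Delta v_{t},v_{t}\rangle\,dx=\|\nabla\times v_{t}\|_{2}^{2}+\|\nabla\cdot v_{t}\|_{2}^{2},$$
so that
$$\tfrac{1}{2}\tfrac{d}{dt}\|v_{t}\|_{2}^{2}+\|\nabla v_{t}\|_{2}^{2}\leq \Bigl|\int_{\Omega}\partial_{t}\bigl[(\beta+w)\cdot\nabla(v+w)\bigr]\cdot v_{t}\,dx\Bigr|+\Bigl|\int_{\Omega}\langle\nabla\partial_{t}p_{\beta},v_{t}\rangle\,dx\Bigr|,$$
where the pressure integral has been integrated by parts using $\langle v_{t},\nu\rangle|_{\Gamma}=0$. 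Expanding the product rule in the convection term, applying H\"older with the Sobolev embeddings $H^{2}\hookrightarrow L^{\infty}$ and $H^{1}\hookrightarrow L^{6}$ in dimension three, and absorbing a small fraction of $\|\nabla v_{t}\|_{2}^{2}$, I obtain
$$\tfrac{d}{dt}\|v_{t}\|_{2}^{2}+\|\nabla v_{t}\|_{2}^{2}\leq C\bigl(1+\|(\beta,w)\|_{\mathbf{N}}^{2}\bigr)E(t)+C\bigl(1+\|(\beta,w)\|_{\mathbf{N}}^{2}\bigr)^{2}.$$

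Elliptic regularity applied to the Neumann problem \eqref{ps-01}, combined with the algebra property of $H^{2}(\Omega)$, gives $\|p_{\beta}\|_{H^{2}}\leq C\|\beta+w\|_{H^{2}}^{2}$; differentiating \eqref{ps-01} in $t$ produces a companion Neumann problem for $\partial_{t}p_{\beta}$ whose data are controlled by $\|(\beta,w)\|_{\mathbf{N}}\bigl(1+\|(\beta,w)\|_{\mathbf{N}}\bigr)$, whence $\|\nabla\partial_{t}p_{\beta}\|_{2}\leq C\bigl(1+\|(\beta,w)\|_{\mathbf{N}}^{2}\bigr)$. Feeding these back into the elliptic reduction and adding a standard $L^{2}$-test of \eqref{ps-02} against $v$ controls $\|v\|_{H^{2}}^{2}$ by $\|v_{t}\|_{2}^{2}+C\bigl(1+\|(\beta,w)\|_{\mathbf{N}}^{2}\bigr)^{2}$, which together with the previous step closes
$$\tfrac{d}{dt}E(t)\leq C\bigl(1+\|(\beta,w)\|_{\mathbf{N}}^{2}\bigr)E(t)+C\bigl(1+\|(\beta,w)\|_{\mathbf{N}}^{2}\bigr)^{2}.$$
The initial datum is read off from \eqref{ps-02} at $t=0$: since $v(0,\cdot)=0$, $\beta(0,\cdot)=0$ and $w(0,\cdot)=u_{0}$, one has $v_{t}(0,\cdot)=-u_{0}\cdot\nabla u_{0}-\nabla p_{\beta}(0,\cdot)$, so $E(0)\leq C\|u_{0}\|_{H^{2}}^{4}$. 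Gronwall's inequality then reproduces exactly the estimate \eqref{s29-071}.

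The principal obstacle is that the absolute boundary conditions \eqref{absolute-1} are of mixed Dirichlet--Neumann type along $\Gamma$, so the classical coercive $L^{p}$ parabolic theory is not available to bound $\|v\|_{H^{2}}$ directly from the data; this is precisely why the time-derivative $v_{t}$ has to be brought in, with $H^{2}$-regularity of $v$ recovered only through the elliptic identity $\|v\|_{H^{2}}^{2}\leq C(\|\Delta v\|_{2}^{2}+\|v\|_{2}^{2})$ of Corollary~\ref{coro-s29-1}(iv). Controlling $\|\nabla v_{t}\|_{2}$ in turn forces one to differentiate both \eqref{ps-02} and the pressure equation \eqref{ps-01} in $t$ and to absorb the commutator-type nonlinear contribution $(\beta_{t}+w_{t})\cdot\nabla(v+w)+(\beta+w)\cdot\nabla(v_{t}+w_{t})$, whose quadratic dependence on $(\beta,w)$ is what produces the factor $(1+\|(\beta,w)\|_{\mathbf{N}}^{2})^{2}$ on the right-hand side of \eqref{s29-071}.
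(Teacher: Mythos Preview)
Your outline has the right general shape (Gronwall on a Lyapunov quantity, time-differentiating \eqref{ps-02}, using the absolute boundary conditions), but it does not close, and it departs from the paper's argument at the crucial step. The main gap is that you never obtain a pointwise bound on $\|v_t(t,\cdot)\|_{H^1}$. Your energy identity on the time-differentiated equation yields $\tfrac{d}{dt}\|v_t\|_2^2 + \|\nabla v_t\|_2^2 \leq \ldots$, which after Gronwall controls $\|v_t(t)\|_2^2$ and $\int_0^t\|\nabla v_t\|_2^2\,ds$, \emph{not} $\|\nabla v_t(t)\|_2^2$ at each fixed $t$. Since $E(t)$ includes $\|v_t\|_{H^1}^2$, your steps do not produce a differential inequality for $E$: you differentiated only $\|v_t\|_2^2$, never $\|v_t\|_{H^1}^2$ or $\|v\|_{H^2}^2$. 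A secondary issue is that the elliptic reduction $\Delta v=v_t+(\beta+w)\cdot\nabla(v+w)+\nabla p_\beta$ leaves the term $\|\beta+w\|_\infty\|\nabla v\|_2$ on the right, which cannot be absorbed without smallness; interpolation can salvage it, but not in the clean form ``$\|v\|_{H^2}^2\le \|v_t\|_2^2+C(1+\|(\beta,w)\|_{\mathbf N}^2)^2$'' you wrote.

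The paper avoids both problems by working not with $E$ but with
\[
F(t)=\|(v,\psi)\|_2^2+\|(\nabla g,\nabla q)\|_2^2+\|(v_t,d_t,\omega_t)\|_2^2,
\]
where $\psi=\nabla\times\omega$, $d=\nabla\cdot v$, and $g=d-q$ with $q$ an auxiliary Neumann solution. By Lemma~\ref{ths28-01} and Corollary~\ref{coro-s29-1}(iv), $F$ dominates $\|v\|_{\mathbf N}^2$, and one computes $\tfrac{d}{dt}$ of each piece separately. Two of these computations are delicate and missing from your sketch: the bound on $\tfrac{d}{dt}\|\psi\|_2^2$ requires a cancellation of boundary integrals coming from the identity $(\Delta\omega)^\Vert|_\Gamma=(\nabla\times((\beta+w)\cdot\nabla(v+w)))^\Vert$ (a consequence of the vorticity equation and $\omega^\Vert|_\Gamma=0$); and the paper explicitly states that $\tfrac{d}{dt}\|\nabla d\|_2^2$ cannot be bounded directly, which is precisely why the $(g,q)$ splitting of the divergence is introduced. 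These are the substantive ideas behind \eqref{s29-071}, and your proposal does not supply substitutes for them.
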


To establish estimate (\ref{s29-071}), we will frequently use an
elementary $L^{2}$-estimate for elliptic equations, which can be
stated as follows: If $\phi$ is a solution to the Neumann boundary
problem:
\begin{equation}
\left\{
\begin{array}{ll}
\Delta \phi =\nabla\cdot f, \\
\left. \partial_\nu \phi\right|_{\Gamma}=\langle f, \mathbf{\nu
}\rangle,
\end{array}
\right. \text{\ \ }  \label{posest01}
\end{equation}
then we have the Solonnikov estimate:
\begin{equation}
\|\nabla \phi\|_{2}\leq \|f\|_{2}. \label{es29-04}
\end{equation}

Estimate (\ref{es29-04}) is a special case of Theorem 2.2 in
Solonnikov \cite{Solonnikov1973}. In fact, estimate (\ref{es29-04})
follows easily from an integration by parts argument. Since
\begin{eqnarray*}
\|\nabla \phi\|_{2}^{2}&=&-\int_{\Omega}\langle \phi, \Delta \phi
\rangle \, dx +\int_{\Gamma}\phi\, \partial_\nu \phi \,dS
=-\int_{\Omega}\langle \phi, \nabla\cdot w\rangle\,dx
+\int_{\Gamma}\phi\, \partial_\nu\phi\, dS \\
&=&\int_{\Omega}\langle \nabla \phi, w\rangle\, dx -\int_{\Gamma}
\phi \langle w, \mathbf{\nu}\rangle\, dS +\int_{\Gamma}\phi
\,\partial_\nu \phi\, dS =\int_{\Omega}\langle \nabla\phi,
w\rangle\,dS,
\end{eqnarray*}
so that (\ref{es29-04}) follows from the Schwartz inequality directly.

\medskip
We now present the proof of Theorem \ref{them01}. Throughout the proof, we
will use $C$
to denote a constant depending only on the domain $\Omega$,
which may be different at each
occurrence. We also omit the lowerscript $\beta$ for simplicity: For
example, we write $p$ for $p_{\beta}$.

Let $d=\nabla\cdot v$, $\omega =\nabla\times v$, and $\psi =\nabla
\times \omega$ for simplicity. Let $f_{t}$ denote the
time-derivative $\partial_t f$ for a vector field $f$.
Observe that $v_{t}$ again satisfies the absolute boundary
conditions \eqref{absolute-1}. Thus, according to Corollary
\ref{coro-s29-1},
\begin{eqnarray}
\|v_{t}\|_{H^{1}}\leq C \|(v_{t}, d_{t},\omega_{t})\|_{2}, \qquad
&&\|v\|_{H^{2}}\leq C\|(v, \nabla d, \psi)\|_{2}. \label{s29-83}
\end{eqnarray}
It is therefore natural to bound the time-derivative of each term
appearing on the right-hand sides of (\ref{s29-83}).
However, as a matter of fact, we are unable to
bound $\frac{d}{dt}\|\nabla d\|_{2}^{2}$. A different approach to
handle $\|\nabla d\|_{2}$ is required.

Taking the divergence $\nabla\cdot$ on both sides of the linear
parabolic equations (\ref{ps-02}), one obtains
\begin{equation}
\partial_t d=\Delta d
+\nabla\cdot\left((\beta+w)\cdot\nabla(\beta-v)\right).
\label{t29-21}
\end{equation}
{}From \eqref{ps-02}--\eqref{ps-bd-1},
we have
\begin{eqnarray*}
\langle \Delta v, \,\nu \rangle =-\pi (\beta +w, v+w)+
\partial_\nu\, p_{\beta} =\pi\left(\beta+w, \beta -v\right)
\end{eqnarray*}
so that, according to Lemma \ref{lems2}, $d$ satisfies the Neumann
boundary condition:
\begin{equation}
\left. \partial_\nu d \right|_{\Gamma}=\pi\left(\beta+w,
\beta-v\right),
\label{t29-22}
\end{equation}
which is nonhomogeneous. We define, for every $0\le t\leq T$, a function $%
q(t,\cdot)$ by solving the Poisson equation:
\begin{equation}
\left\{
\begin{array}{ll}
\Delta q=-\nabla\cdot\left((\beta+w)\cdot\nabla(\beta-v)\right), \\
\left. \partial_\nu q\right|_{\Gamma} =\pi\left(\beta+w,
\beta-v\right),
\end{array}
\right.
\label{sa29-01}
\end{equation}
subject to \ $\int_{\Omega}q \, dx=0$.

Let $g=d-q$. Then $g$ solves the linear parabolic equation:
\begin{equation}
\left\{
\begin{array}{ll}
\partial_t g=\Delta g-q_{t}, \\
\left. \partial_\nu g\right|_{\Gamma}=0.
\end{array}
\right.
\label{sa29-04}
\end{equation}
Since $\|\nabla d\|_{2}\leq \|(\nabla g, \nabla q)\|_{2}$, we have
\begin{equation*}
\|v\|_{H^{2}}\leq C \|(v, \nabla g, \nabla q, \psi)\|_{2}.
\end{equation*}
We thus consider the following function:
\begin{eqnarray*}
F(t)=\|(v,\psi)(t,\cdot)\|_{2}^{2}+\|(\nabla g, \nabla
q)(t,\cdot)\|_{2}^{2}
+\|(v_{t}, d_t, \omega_t)(t,\cdot)\|_{2}^{2}.
\end{eqnarray*}
Then $\|v(t,\cdot)\|_{\mathbf{N}}^{2}\leq F(t)$, and
\begin{equation*}
F(0)=\|(u_{0}\cdot\nabla)u_{0}+\nabla p_{0}\|_{2}^{2}
+\|\nabla\times(u_{0}\cdot\nabla)u_{0}\|_{2}^{2},
\end{equation*}
where $p_{0}$ is the solution to
\begin{equation*}
\Delta p_{0}=-\nabla\cdot\left((u_{0}\cdot\nabla)u_{0}\right),
\qquad \left. \partial_\nu p_{0}\right|_{\Gamma} =\pi(u_{0}, u_{0}).
\end{equation*}
Hence
\begin{equation*}
F(0)\leq C\|u_{0}\|_{H^{2}}^4\text{,}
\end{equation*}
and (\ref{s29-071}) follows from the following lemma.

\begin{lemma}
Under the assumptions in Theorem {\rm \ref{them01}} and notations
introduced above, we have the following differential inequality:
\begin{equation}
\frac{d}{dt}F\leq C\left(\|(\beta, w)\|_{\mathbf{N}}^{2}+1\right)
\left(F+ 1+\|(\beta, w)\|_{\mathbf{N}}^{2}\right).
\label{ket-s10-01}
\end{equation}
\end{lemma}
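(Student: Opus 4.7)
The plan is to differentiate each of the six summands of $F(t)$ separately, invoke the PDE \eqref{ps-02} (and its curl and divergence time derivatives) to replace the time derivatives by spatial operators, and then bound the resulting expressions by the right-hand side of \eqref{ket-s10-01}. Throughout I write $U := \beta + w$ and use freely that $v$ and $v_t$ inherit the absolute boundary conditions, that $\|U\|_\infty \le C\|(\beta,w)\|_{\mathbf N}$ by Sobolev embedding in three dimensions, and that $\|v\|_{H^2} \le C\sqrt{F}$ by Corollary~\ref{coro-s29-1} combined with the splitting $d = g + q$.

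First, for the two spatial $L^2$-quantities $\|v\|_2^2$ and $\|\psi\|_2^2$: testing \eqref{ps-02} against $v$ and using Lemma~\ref{les29-001}, the absolute conditions make the boundary terms in \eqref{07june03-04} vanish, giving
\[
\tfrac{1}{2}\tfrac{d}{dt}\|v\|_2^2 + \|\omega\|_2^2 + \|d\|_2^2 = -\int_\Omega \langle U\cdot\nabla(v+w), v\rangle\,dx + \int_\Omega d\,p_\beta\,dx,
\]
where the pressure integral is controlled via the Solonnikov estimate \eqref{es29-04} applied to \eqref{ps-01}. The $\psi$-estimate is obtained by taking $\nabla\times$ twice of \eqref{ps-02} (both operations commute with $\Delta$), testing against $\psi$, and using $\psi^\bot|_\Gamma = 0$ from Corollary~\ref{coro-s29-1}(iii) together with $\nabla\cdot\psi = 0$ to integrate by parts; the nonlinear commutator $\nabla\times\nabla\times(U\cdot\nabla(v+w))$ is paired with $\psi$ and, after integration by parts, bounded by H\"older's inequality, with at most $\nabla^2(v+w)$ entering and controlled by $\sqrt{F}+\|w\|_{\mathbf N}$.

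For the elliptic-slaved quantities $\|\nabla q\|_2^2$ and $\|\nabla g\|_2^2$: the Solonnikov estimate \eqref{es29-04} applied to \eqref{sa29-01} gives $\|\nabla q\|_2 \le C\|U\cdot\nabla(\beta - v)\|_2$, and after differentiating \eqref{sa29-01} in $t$ a parallel bound on $\|\nabla q_t\|_2$ follows, each of size $C\|(\beta,w)\|_{\mathbf N}(\sqrt{F} + \|(\beta,w)\|_{\mathbf N})$. For $g$, which solves the homogeneous-Neumann heat equation \eqref{sa29-04} with source $-q_t$, testing against $-\Delta g$ and using $\partial_\nu g|_\Gamma = 0$ yields
\[
\tfrac{1}{2}\tfrac{d}{dt}\|\nabla g\|_2^2 + \|\Delta g\|_2^2 \le \|q_t\|_2\,\|\Delta g\|_2,
\]
so that $\tfrac{d}{dt}\|\nabla g\|_2^2$ is controlled by $\|q_t\|_2^2$, and hence by the already-established bound on $\|\nabla q_t\|_2$ via the Poincar\'e inequality.

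Finally, for the three time-derivative quantities $\|v_t\|_2^2$, $\|d_t\|_2^2$, and $\|\omega_t\|_2^2$: differentiating \eqref{ps-02} in $t$ and testing against $v_t$, the absolute conditions on $v_t$ render Lemma~\ref{les29-001} applicable and produce dissipation $-\|\omega_t\|_2^2-\|d_t\|_2^2$ together with nonlinear forcing terms $\langle \partial_t U\cdot\nabla(v+w), v_t\rangle$ and $\langle U\cdot\nabla(v_t+w_t), v_t\rangle$, each bounded via H\"older by $C\|(\beta,w)\|_{\mathbf N}(\sqrt{F}+\|(\beta,w)\|_{\mathbf N})\sqrt{F}$; an analogous argument applied after taking $\nabla\cdot$ and $\nabla\times$ of the $t$-differentiated PDE handles $d_t$ and $\omega_t$, with the trace bound \eqref{3.8a} controlling the inhomogeneous Neumann data for $d_t$. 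The main obstacle is that $d_t$ satisfies a nonhomogeneous Neumann condition whose data involves $\partial_t\pi(U,\beta - v)$ and which cannot be bounded in a Sobolev norm sufficient to close a direct estimate for $\|\nabla d\|_2^2$; the splitting $d = g + q$ overcomes this by absorbing the entire inhomogeneity into the elliptically controlled $q$, leaving $g$ with a clean homogeneous Neumann problem. Collecting the six estimates and applying Young's inequality $ab \le \varepsilon a^2 + \varepsilon^{-1}b^2$ to absorb cross-terms delivers the differential inequality \eqref{ket-s10-01}.
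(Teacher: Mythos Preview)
Your overall strategy matches the paper's, but there is a genuine gap in the estimate for $\frac{d}{dt}\|\psi\|_2^2$. You claim that $\psi^\bot|_\Gamma=0$ and $\nabla\cdot\psi=0$ suffice to integrate by parts cleanly. They do not. Writing $\Delta\psi=-\nabla\times(\nabla\times\psi)$ and applying \eqref{s28-08}, one obtains
\[
\frac{d}{dt}\|\psi\|_2^2 = -2\|\nabla\times\psi\|_2^2
-2\int_\Omega\langle\nabla\times(U\cdot\nabla(v+w)),\nabla\times\psi\rangle
+2\int_\Gamma\langle\psi\times(\nabla\times\psi),\nu\rangle
-2\int_\Gamma\langle A\times\psi,\nu\rangle,
\]
with $A=\nabla\times(U\cdot\nabla(v+w))$. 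Since $\psi$ is tangential, $\langle X\times\psi,\nu\rangle$ depends on $X^\Vert$, and neither boundary integral vanishes on its own. The first one carries $(\nabla\times\psi)^\Vert=-(\Delta\omega)^\Vert$, i.e.\ three derivatives of $v$, which is one more than $F$ controls; you cannot absorb it by the dissipation or by $\sqrt{F}$.

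The mechanism the paper uses, and which your sketch omits, is a \emph{cancellation} coming from the vorticity equation itself. Because $\omega^\Vert|_\Gamma=0$, restricting the tangential part of $\partial_t\omega=\Delta\omega-\nabla\times(U\cdot\nabla(v+w))$ to $\Gamma$ gives $(\Delta\omega)^\Vert=(\nabla\times(U\cdot\nabla(v+w)))^\Vert$ on $\Gamma$; combined with $\nabla\times\psi=-\Delta\omega$ this forces the two boundary integrals above to be equal and opposite, so they sum to zero. Only then does the bulk term $\int\langle\nabla\times(U\cdot\nabla(v+w)),\nabla\times\psi\rangle$ remain, and that one can indeed be bounded after Cauchy--Schwarz by $\|\nabla\times\psi\|_2\cdot C\|\beta+w\|_{H^2}\|v+w\|_{H^2}$ and absorbed. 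A smaller point: for $d_t$ you invoke the trace inequality on the inhomogeneous Neumann data, whereas the paper observes the same type of exact boundary cancellation (the Neumann data for $d_t$ equals the normal trace of the forcing in the divergence equation), which is cleaner and avoids any trace-product issues.
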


\begin{proof}
Using \eqref{ps-02}--\eqref{ps-bd-1} and integration by parts, we
have
\begin{eqnarray*}
\frac{d}{dt}\|v\|_{2}^{2} &=&2\int_{\Omega}\langle v, \Delta
v\rangle\,dx -2\int_{\Omega}\langle v, \nabla p\rangle\,dx
-2\int_{\Omega}\langle v, (\beta+w)\cdot\nabla (v+w)\rangle\,dx  \\
&\leq &2 \|v\|_{H^{2}}^{2}+2\|v\|_{2}\|\nabla
p\|_{2}+2\|v\|_{2}\|\beta +w\|_{\infty}\|\nabla(v+w)\|_{2}.
\end{eqnarray*}
Using the Sobolev embedding, together with the $L^{2}$-estimate for
$p$, we have
\begin{equation}
\frac{d}{dt}\|v\|_{2}^{2}\leq C\|\beta+w\|_{H^{2}}\left( F+1\right)
\|\nabla w\|_{2}. \label{es29-841}
\end{equation}

\medskip
Next we show the following inequality:
\begin{equation}
\frac{d}{dt}\|v_{t}\|^{2}\leq C\left(1+\|\beta
+w\|_{\mathbf{N}}^{2}\right) \left( F+1+\|w_{t}\|_{2}^{2} +\|\nabla
w\|_{2}^{2}\right). \label{es29-842}
\end{equation}
By differentiating (\ref{ps-02}), we obtain
\begin{equation}
\partial_tv_{t}=\Delta v_{t}-(\beta+w)\cdot\nabla(v_{t}+w_{t})
-(\beta_{t}+w_{t})\cdot\nabla(v+w)-\nabla p_{t}, \label{t29-01}
\end{equation}
and $v_{t}$ also satisfies the absolute boundary condition
\eqref{absolute-1}. Since $p_{t}$ is the time-derivative of $p$, it
solves the Neumann problem:
\begin{equation}
\left\{
\begin{array}{ll}
\Delta
p_{t}=-\nabla\cdot\big((\beta_{t}+w_{t})\cdot\nabla(\beta+w)\big)
-\nabla\cdot\big((\beta +w)\cdot\nabla(\beta_{t}+w_{t})\big), \\
\left. \partial_\nu p_{t}\right|_{\Gamma}=2\, \pi
\left(\beta_{t}+w_{t}, \beta +w\right).
\end{array}
\right.
\label{t29-02}
\end{equation}

According to the Solonnikov estimate \eqref{es29-04},
\begin{eqnarray}
\|\nabla p_{t}\| &\leq &\|(\beta_{t}+w_{t})\cdot\nabla(\beta+w)
+\nabla\cdot\left((\beta +w)\cdot\nabla(\beta_{t}+w_{t})\right)\|_{2} \notag \\
&\leq &\|\beta_{t}+w_{t}\|_{4}\|\nabla(\beta +w)\|_{4}
+\|\beta+h\|_{\infty}\|\nabla \beta_{t}+\nabla w_{t}\|_{2}  \notag \\
&\leq &C\|\beta_{t}+w_{t}\|_{H^{1}}\|\beta +w\|_{H^{2}},
\label{s29-91}
\end{eqnarray}
where the second inequality follows from the H\"{o}lder inequality,
and the last one follows from the Sobolev imbedding: $\|T\|_{4}\leq
C \|T\|_{H^{1}}$ and $\|T\|_{\infty }\leq C\|T\|_{H^{2}}$ in $\R^3$.
Using (\ref{t29-01}) yields
\begin{eqnarray*}
\frac{d}{dt}\|v_{t}\|_{2}^{2}=&&2\int_{\Omega}\langle v_{t}, \Delta
v_{t}\rangle\,dx -2\int_{\Omega}\langle v_{t}, (\beta+w)\cdot\nabla
(v_{t}+w_{t})\rangle\, dx\\
&& -2\int_{\Omega}\langle v_{t}, \nabla p_{t}\rangle\,dx
-2\int_{\Omega}\langle v_{t}, (\beta _{t}+w_{t})\cdot\nabla
(v+w)\rangle\,dx.
\end{eqnarray*}
Then, using the H\"{o}lder inequality, we have
\begin{eqnarray*}
\frac{d}{dt}\|v_{t}\|_{2}^{2} &\leq &-2\|\nabla
v_{t}\|_{2}^{2}-2\int_{\Gamma}\pi(v_{t},v_{t})\, dS
+\|\beta +w\|_{\infty}\|v_{t}\|_{2}\|\nabla v_{t}+\nabla w_{t}\|_{2} \\
&&+2\|v_{t}\|_{2}\|\nabla
p_{t}\|_{2}+\|\beta_{t}+w_{t}\|_{4}\|v_{t}\|_{4}\|\nabla v+\nabla
w\|_{2},
\end{eqnarray*}
where we have used (\ref{07june03-04}) and (\ref{s28-09}). According
to the trace imbedding theorem (Theorem 1.5.1.10 of Grisvard
\cite{gri1}, page 41),
\begin{equation*}
\big|\int_{\Gamma }\pi (v_{t},v_{t})\, dS\big| \leq
C\|v_{t}\|_{H^{1}}^{2}.
\end{equation*}
Thus we  establish
\begin{eqnarray}
\frac{d}{dt}\|v_{t}\|^{2} &\leq
&C\|v_{t}\|^{2}+2\|v_{t}\|_{2}\|\nabla
p_{t}\|_{2}+C\|\beta +w\|_{H^{2}}^{2}\|v_{t}+w_{t}\|_{2}^{2}  \notag \\
&&+C\|\beta_{t}+w_{t}\|_{H^{1}}\|v_{t}\|_{H^{1}}\|\nabla v+\nabla
w\|_{2},
\label{t29-07}
\end{eqnarray}
which implies (\ref{es29-842}).

Similarly, $\omega =\nabla\times v$ evolves according to the
vorticity equation:
\begin{equation}\label{t29-23}
\left\{\begin{array}{ll}
\partial_t \omega =\Delta \omega -\nabla
\times ((\beta+ w)\cdot\nabla (v+w)),\\
\left. \omega^{\Vert}\right|_{\Gamma}=0,
\end{array}\right.
\end{equation}
and $\omega_{t}$ satisfies the
evolution equation:
\begin{equation}\label{t29-06}
\left\{\begin{array}{ll}
\partial_t\omega_{t}=\Delta\omega_{t}-\nabla
\times \left( (\beta _{t}+w_{t})\cdot\nabla(v+w)+(\beta
+w)\cdot\nabla(v_{t}+w_{t})\right),\\
\left. \omega_{t}^{\Vert}\right|_{\Gamma}=0.
\end{array}\right.
\end{equation}
Moreover, $\nabla\cdot\omega_{t}=0$. Hence, integration by parts
yields
\begin{eqnarray}
&&\frac{d}{dt}\|\omega_{t}\|_{2}^{2}\nonumber\\
&&=2\int_{\Omega}\langle\omega_{t}, \Delta\omega_{t}\rangle\,dx
-2\int_{\Omega}\langle\omega_{t}, \nabla \times
\big((\beta_{t}+w_{t})\cdot\nabla(v+w)+(\beta+ w)\cdot\nabla
(v_{t}+w_{t})\big) \rangle\,dx
 \notag \\
&&=-2\int_{\Omega}|\nabla \times \omega_{t}|^{2}\,dx
-2\int_{\Omega}\langle \nabla\times\omega_{t},
(\beta_{t}+w_{t})\cdot\nabla (v+w)
+(\beta+w)\cdot\nabla(v_{t}+w_{t})\rangle\,dx  \notag \\
&&\leq
\frac{1}{2}\|(\beta_{t}+w_{t})\cdot\nabla(v+w)+(\beta+w)\cdot\nabla
(v_{t}+w_{t})\|_{2}^{2}, \label{t29-09}
\end{eqnarray}
which yields
\begin{equation}
\frac{d}{dt}\|\omega_{t}\|_{2}^{2}\leq C\|\beta+w\|_{\mathbf{N}}^{2}
\left(F+\|(\nabla w, \nabla w_{t})\|_{2}^{2}\right).
\label{wt-est-1}
\end{equation}

\medskip
Next we consider $\|d_{t}\|_{2}^{2}$ to show that
\begin{equation}
\frac{d}{dt}\|d_{t}\|_{2}^{2}\leq
C\|\beta+w\|_{\mathbf{N}}^{2}\left(F+\|\beta\|_{\mathbf{N}}^{2}\right).
\label{s29-102}
\end{equation}
Indeed, differentiating
equation (\ref{t29-21}) for $d$ to obtain
\begin{equation}
\partial_td_{t}=\Delta d_{t}+\nabla\cdot\big((\beta_{t}+w_{t})\cdot\nabla (\beta-v)
+(\beta+w)\cdot\nabla(\beta_{t}-v_{t})\big), \label{t29-04}
\end{equation}
subject to the Neumann boundary condition:
\begin{equation}
\left. \partial_\nu d_{t}\right|_{\Gamma} =\pi(\beta_{t}+w_{t},
\beta-v)+\pi(\beta+w, \beta_{t}-v_{t}).
\label{t29-05}
\end{equation}
It follows that
\begin{eqnarray*}
\frac{d}{dt}\|d_{t}\|_{2}^{2}=2\int_{\Omega}\langle d_{t}, \Delta
d_{t}\rangle\,dx +2\int_{\Omega}\langle d_{t},
\nabla\cdot\left((\beta_{t}+w_{t})\cdot\nabla(\beta-v)
+(\beta+w)\cdot\nabla(\beta_{t}-v_{t})\right)\rangle\,dx.
\end{eqnarray*}
Performing integrating by parts for the two integrals on the
right-hand side and noting that the boundary integrals cancel out,
we have
\begin{eqnarray*}
\frac{d}{dt}\|d_{t}\|_{2}^{2}&=&-2\int_{\Omega}|\nabla d_{t}|^{2}dx
+2\int_{\Omega}\langle \nabla d_{t}, (\beta_{t}+w_{t})\cdot\nabla(\beta-v)\rangle\,dx\\
&&+2\int_{\Omega}\langle\nabla d_{t}, (\beta +w)\cdot\nabla(\beta_{t}-v_{t})\rangle\,dx \\
&\leq &C \|(\beta_{t}+w_{t})\cdot\nabla (\beta-v)\|_{2}^{2}
+\|(\beta+w)\cdot\nabla(\beta_{t}-v_{t})\|_{2}^{2} \\
&\leq &C \|\beta_{t}+w_{t}\|_{4}^{2}\|\nabla \beta-\nabla
v\|_{4}^{2}+\|\beta +w\|_{\infty}^{2}\|\nabla\beta_{t}-\nabla v_{t}\|_{2}^{2}\\
&\leq &C\|\beta
+w\|_{\mathbf{N}}^{2}\big(\|\beta\|_{H^{2}}^{2}+\|v\|_{H^{2}}^{2}
+\|\nabla \beta_{t}-\nabla v_{t}\|_{2}^{2}\big),
\end{eqnarray*}
and (\ref{s29-102}) follows.

Next we handle the second-order derivative. That is, we need to
bound $\frac{d}{dt}\|\nabla g(t,\cdot)\|_{2}^{2}$,
$\frac{d}{dt}\|\nabla q(t,\cdot)\|_{2}^{2}$, and
$\frac{d}{dt}\|\psi(t,\cdot)\|_{L^{2}}^{2}$. We handle them one by
one.

Since $g$ satisfies (\ref{sa29-04}), integration by parts yields
\begin{eqnarray*}
\frac{d}{dt}\|\nabla g\|_{2}^{2}=-2\int_{\Omega}(\Delta g)^{2}\,dx
+2\int_{\Omega}\langle\nabla q_{t}, \nabla g\rangle\,dx \leq
\|\nabla q_{t}\|_{2}\|\nabla g\|_{2}.
\end{eqnarray*}
While, $q_{t}$ solves the Poisson equation:
\begin{equation*}
\Delta q_{t}=-\nabla\cdot\big((\beta_{t}+w_{t})\cdot\nabla
(\beta-v)+(\beta+w)\cdot\nabla(\beta_{t}-v_{t})\big),
\end{equation*}
subject to
\begin{eqnarray*}
\left. \partial_\nu q_{t}\right|_{\Gamma}
&=&\pi(\beta_{t}+w_{t}, \beta-v)+\pi(\beta +w, \beta_{t}-v_{t}) \\
&=&-\langle \nabla\cdot\big((\beta +w)\cdot\nabla(\beta-v)\big),\,
\nu\rangle.
\end{eqnarray*}
Thus, according to the Solonnikov estimate \eqref{es29-04},
\begin{eqnarray}
\|\nabla q_{t}\|_{2}&\leq &\|(\beta_{t}+w_{t})\cdot\nabla(\beta-u)
+(\beta+w)\cdot\nabla(\beta_{t}-v_{t})\|_{2} \notag \\
&\leq &\|\beta+w\|_{\mathbf{N}}\big(\|\beta\|_{H^{2}}+\|v\|_{2}
+\|\nabla(\beta_{t}-v_{t})\|_{2}\big)   \notag \\
&\leq &\|\beta+w\|_{\mathbf{N}}\left(\|\beta\|_{\mathbf{N}}
+\sqrt{F}\right), \label{s03-022}
\end{eqnarray}
and hence
\begin{equation*}
\frac{d}{dt}\|\nabla g(t,\cdot)\|_{2}^{2} \leq
2\|\beta+w\|_{\mathbf{N}}\left(\|\beta\|_{\mathbf{N}}+\sqrt{F}\right)
\sqrt{F}.
\end{equation*}

To estimate $\frac{d}{dt}\|\nabla q(t,\cdot)\|_{2}^{2}$, we begin
with
\begin{eqnarray*}
\frac{d}{dt}\|\nabla q(t, \cdot)\|_{2}^{2} =2\int_{\Omega}\langle
\nabla q_{t}, \nabla q\rangle\,dx =-2\int_{\Omega}q\Delta q_{t}\,dx
+2\int_{\Gamma}q\,
\partial_\nu q_{t}\, dS.
\end{eqnarray*}
Using the boundary condition for $q_{t}$ and integrating by parts again, we
obtain
\begin{eqnarray*}
\frac{d}{dt}\|\nabla q(t,\cdot)\|_{2}^{2}
&=&2\int_{\Omega}q\nabla\cdot\big((\beta_{t}+w_{t})\cdot\nabla(\beta-v)
   +(\beta+w)\cdot\nabla(\beta_{t}-v_{t})\big)\,dx  \\
&&+2\int_{\Gamma}q\big(\pi(\beta_{t}+w_{t}, \beta-v)
   +\pi(\beta+w, \beta_{t}-v_{t})\big)\,dS\\
&=&-2\int_{\Omega}\langle\nabla q,
(\beta_{t}+w_{t})\cdot\nabla(\beta-v)+(\beta+w)\cdot\nabla(\beta_{t}-v_{t})\rangle\,dS.
\end{eqnarray*}
Hence, we have
\begin{equation}
\frac{d}{dt}\|\nabla q\|_{2}^{2} \leq C\|\beta+w\|_{\mathbf{N}}\big(
\|\beta\|_{\mathbf{N}}+\sqrt{F}\big) \sqrt{F}. \label{s03-41}
\end{equation}

\medskip
Finally, we  establish a differential inequality for
$\|\psi\|_{2}^{2}$. Recall that the vorticity $\omega $ evolves
according to the parabolic equation \eqref{t29-23}.
Taking the curl operation $\nabla\times$ both sides of equation
\eqref{t29-23}, we obtain the evolution equation for $\psi$:
\begin{equation}
\partial_t\psi
=\Delta\psi
-\nabla\times\nabla\times\left((\beta+w)\cdot\nabla(v+w)\right),
\label{s27-41}
\end{equation}
so that
\begin{equation*}
\partial_t(|\psi|^{2})
=2\langle\Delta\psi, \psi\rangle
 -2\langle\nabla\times\nabla\times
 \left((\beta+w)\cdot\nabla(v+w)\right), \psi\rangle.
\end{equation*}
Integrating over $\Omega$ and performing integration by parts, one
then obtains
\begin{eqnarray}
\frac{d}{dt}\|\psi(t,\cdot)\|_{2}^{2}&=&2\int_{\Omega}\langle
\Delta\psi, \psi\rangle\,dx -2\int_{\Omega}\langle\nabla\times\nabla
\times \big((\beta+w)\cdot\nabla(v+w)\big), \psi\rangle\,dx   \notag \\
&=&-2\int_{\Omega}|\nabla\times\psi|^{2}\,dx
  -2\int_{\Omega}\langle\nabla\times\big((\beta+w)\cdot\nabla(v+w)\big),
  \nabla\times\psi\rangle\,dx \notag \\
&&+2\int_{\Gamma}\langle\psi\times(\nabla\times\psi),
    \mathbf{\nu}\rangle\,dS
   -2\int_{\Gamma}\langle\nabla\times\big((\beta+w)\cdot\nabla(v+w)\big)\times\psi,
    \nu\rangle\,dS,\notag\\
&& \label{s28-06}
\end{eqnarray}
where we have used the fact that $\nabla\cdot\psi=0$. Now we have to
handle the last two boundary integrals. The vector identity:
\begin{equation*}
\nabla \times \psi
=\nabla\times\left(\nabla\times\omega\right)=-\Delta \omega,
\end{equation*}
yields
\begin{eqnarray*}
\langle\psi\times(\nabla\times\psi), \mathbf{\nu}\rangle
=\langle\Delta \omega \times\psi, \mathbf{\nu}\rangle
=\langle\left(\Delta\omega\right)^{\Vert}\times\psi,
\mathbf{\nu}\rangle.
\end{eqnarray*}
However, since $\left. \omega^{\Vert}\right|_{\Gamma}=0$, it follows
from the vorticity equation \eqref{t29-23}
that
\begin{equation}
\left. \left(\Delta\omega\right)^{\Vert}\right|_{\Gamma}
=\big(\nabla\times\big((\beta+w)\cdot\nabla(v+w)\big)\big)^{\Vert}.
\label{s27-11}
\end{equation}
Therefore, the two boundary integrals sum up to zero. Hence, we have
\begin{equation}
\frac{d}{dt}\|\psi\|_{2}^{2}=-2\int_{\Omega}|\nabla\times\psi|^{2}\,dx
-2\int_{\Omega}\langle\nabla\times\big((\beta+w)\cdot\nabla(v+w)\big),
\nabla\times\psi\rangle\,dx. \label{s29-0031}
\end{equation}
To estimate the second integral, we consider $\alpha =\nabla \times
(X\cdot\nabla Y)$, where $X$ and $Y$ are two vector fields. Since
\begin{equation*}
(X\cdot\nabla Y)^{k}=X^{i}\nabla _{i}Y^{k},
\end{equation*}
then
\begin{eqnarray*}
\alpha^{j}
&=&\frac{1}{2}\varepsilon_{jkl}\left(\nabla_{k}(X^{a}\nabla_{a}Y^{l})
   -\nabla _{l}(X^{a}\nabla_{a}Y^{k})\right)\\
&=&\varepsilon_{jkl}(\nabla_{k}X^{a})(\nabla_{a}Y^{l})
 +\frac{1}{2}\varepsilon_{jkl}X^{a}
  \big(\nabla_{k}\nabla_{a}Y^{l}-\nabla_{l}\nabla_{a}Y^{k}\big)\\
&=&\varepsilon_{jkl}(\nabla_{k}X^{a})(\nabla_{a}Y^{l})
  +X^{a}\nabla_{a}\left(\nabla\times Y\right)^{j}.
\end{eqnarray*}
That is,
\begin{equation*}
\nabla\times(X\cdot\nabla
Y)=\varepsilon_{jkl}(\nabla_{k}X^{a})(\nabla_{a}Y^{l})
     +X\cdot\nabla\left(\nabla\times Y\right),
\end{equation*}
so that
\begin{equation*}
|\nabla\times(X\cdot\nabla Y)| \leq |\nabla X||\nabla
Y|+|X||\nabla\left(\nabla\times Y\right)|.
\end{equation*}
Using this inequality in (\ref{s29-0031}), one deduces that
\begin{eqnarray*}
\frac{d}{dt}\|\psi(t,\cdot)\|_{L^{2}}^{2} &\leq
&-2\int_{\Omega}|\nabla\times\psi|^{2}\, dx +
2\int_{\Omega}|\nabla\times\psi|\,
|\nabla(\beta+w)|\,|\nabla(v+w)|\,dx
\\
&&+2\int_{\Omega}|\nabla\times\psi|\,|\beta+w|\,
|\nabla(\omega+\nabla\times w)|\,dx \\
&\leq &-2\|\nabla\times\psi\|_{2}^{2}+2\|\nabla\times\psi\|_{2}
  \|\nabla(\beta+w)\|_{4}\|\nabla(v+w)\|_{4} \\
&&+2\|\nabla\times\psi\|_{2}\|\beta +w\|_{\infty}\|\nabla\omega
+\nabla(\nabla\times w)\|_{2} \\
&\leq &-2\|\nabla\times\psi\|_{2}^{2}+C\|\nabla\times\psi\|_{2}
\|\beta+w\|_{H^{2}}\|v+w\|_{H^{2}},
\end{eqnarray*}
which implies that
\begin{equation}
\frac{d}{dt}\|\psi\|_{L^{2}}^{2}\leq C\|\beta
+w\|_{\mathbf{N}}^{2}\left(F+\|w\|_{H^{2}}^{2}\right).
\label{s03-42}
\end{equation}

Combining all the estimates yields (\ref{ket-s10-01}). We thus
complete the proof of Theorem \ref{them01}.
\end{proof}

\subsection{Local strong solutions}
For $T>0$, denote $\mathbf{W}_{T}$ the space of all vector fields
$\beta (t,x)$ that satisfy conditions (i)--(iii) in Theorem
\ref{them01} with the uniform norm:
\begin{equation*}
\|\beta\|_{\mathbf{W}_{T}}=\sup_{0\le t\leq T}\sqrt{\|\beta
(t,\cdot)\|_{H^{2}}^{2}+\left\|\partial_t\beta
(t,\cdot)\right\|_{H^{1}}^{2}}.
\end{equation*}
Then $\mathbf{W}_{T}$ is a Banach space. According to
(\ref{s29-071}), there is a constant $C>0$ depending only on $\Omega
$ such that
\begin{eqnarray}
\|V(\beta)\|_{\mathbf{W}_{T}}
&\leq &C\sqrt{C_{0}}\,e^{C(1+\|(\beta, w)\|_{\mathbf{W}_{T}}^{2}) T}  \notag \\
&&+C\sqrt{T}\,e^{C(1+\|(\beta, w)\|_{\mathbf{W}_{T}}^{2}) T}
   \big(1+\|(\beta,w)\|_{\mathbf{W}_{T}}^{2}\big).
\label{s31-31}
\end{eqnarray}
Equipped with the \emph{apriori} estimate (\ref{s29-071}), we are
now in a position to establish the existence of a local (in time)
strong solutions for the initial-boundary value problem with
nonhomogeneous boundary conditions.

\begin{theorem}
Given $C_{0}>1$, there exists $T>0$ depending only on $C_{0}$, the
viscosity coefficient $\mu >0$, and the domain $\Omega$ such that,
for any given $u_{0}\in H^{2}(\Omega)$ which satisfies the absolute
boundary conditions \eqref{kinematic-1}--\eqref{vorticity-1},
$\|u_{0}\|_{H^{2}}\leq C_{0}$, and $\nabla\cdot u_{0}=0$, there
exists a strong solution $u(t,x)$ of \eqref{1.1}--\eqref{1.3} and
\eqref{kinematic-1}--\eqref{vorticity-1} with the form
$u(t,x)=v(t,x)+w(t,x), 0\le t\leq T,$ such that

{\rm (i)} For every $0\le t\leq T$, $v(t,\cdot)\in H^{2}(\Omega)$
and $\partial_t v(t,\cdot)\in H^{1}(\Omega)$;

{\rm (ii)} For every $0\le t\leq T$, $\left.
v^{\bot}\right|_{\Gamma}=0$ and $\left. (\nabla\times
v)^{\Vert}\right|_{\Gamma}=0$;

{\rm (iii)} $t\rightarrow \|v(t,\cdot)\|_{H^{2}}^{2}+\|\partial_t
v(t,\cdot)\|_{H^{1}}^{2}$ is continuous;

{\rm (iv)} $v|_{t=0}=0$, and $v$ satisfies the Navier-Stokes
equations
\begin{equation*}
\partial_t v+(v+w)\cdot\nabla (v+w)=\Delta v-\nabla p, \qquad \nabla\cdot
v=0,
\end{equation*}
where $p$ solves the Poisson equation for each $t\in (0, T]$:
\begin{equation*}
\Delta p=-\nabla\cdot\left((v+w)\cdot\nabla (v+w)\right),
\qquad\left. \partial_\nu p\right|_{\Gamma}=\pi(v+w,v+w),
\end{equation*}
and $w$ solves the initial-boundary value problem \eqref{in-stokes}
for the unsteady Stokes equations.
\end{theorem}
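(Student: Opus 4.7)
The plan is to construct the strong solution $u = v + w$ by a fixed point argument for the velocity mapping $V\colon \beta \mapsto v$ defined by the linear parabolic problem \eqref{ps-02}--\eqref{ps-bd-1}, with $w$ the unique solution of the Stokes problem \eqref{in-stokes}. The driving tool is the a priori estimate \eqref{s31-31} of Theorem \ref{them01} together with Proposition \ref{prop:4.1}, which controls $\|w\|_{\mathbf{W}_T}$ in terms of $\|u_0\|_{H^2}$ and the boundary data $a$. Throughout, I restrict to the closed convex set
\begin{equation*}
B_{R,T} = \bigl\{\beta \in \mathbf{W}_T \,:\, \|\beta\|_{\mathbf{W}_T} \leq R,\ \beta(0,\cdot)=0,\ \beta^\bot|_\Gamma = (\nabla\times\beta)^\Vert|_\Gamma = 0\bigr\},
\end{equation*}
for a radius $R$ to be chosen.

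First I would prove invariance: pick $R^2 \geq 4C^2 C_0$ (with $C$ from \eqref{s31-31}); then, for $\beta \in B_{R,T}$, inequality \eqref{s31-31} reads
\begin{equation*}
\|V(\beta)\|_{\mathbf{W}_T} \leq C\sqrt{C_0}\,e^{C(1+R^2+\|w\|_{\mathbf{W}_T}^2)T} + C\sqrt{T}(1+R^2+\|w\|_{\mathbf{W}_T}^2)e^{C(1+R^2+\|w\|_{\mathbf{W}_T}^2)T}.
\end{equation*}
Choosing $T = T(C_0, \mu, \Omega, R)$ small enough makes the exponential factor close to $1$ and the second summand small, yielding $\|V(\beta)\|_{\mathbf{W}_T} \leq R$, so $V(B_{R,T}) \subset B_{R,T}$. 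Next I would prove contractivity by running the argument of Theorem \ref{them01} on the difference $\tilde v = V(\beta_1)-V(\beta_2)$, which solves a linear parabolic system of the same type with source terms vanishing at $t=0$ and bounded linearly by $\tilde\beta = \beta_1 - \beta_2$; the analogous energy inequality gives
\begin{equation*}
\|\tilde v\|_{\mathbf{W}_T}^2 \leq C\,T\,\Phi(R,\|w\|_{\mathbf{W}_T})\,\|\tilde\beta\|_{\mathbf{W}_T}^2,
\end{equation*}
so a further reduction of $T$ makes $V$ a strict contraction. Banach's fixed point theorem then produces a unique $v \in B_{R,T}$ with $V(v) = v$.

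The final step is to verify that the fixed point really solves the Navier-Stokes system, in particular the incompressibility $\nabla \cdot v = 0$ which was dropped in the linearization. Set $d = \nabla \cdot v$. Taking the divergence of \eqref{ps-02} with $\beta = v$ and using \eqref{ps-01} to cancel the nonlinear divergence term against $\Delta p_v$, we find that $d$ satisfies a homogeneous heat equation $\partial_t d = \Delta d$; the boundary identity \eqref{t29-22} at $\beta = v$ becomes $\partial_\nu d|_\Gamma = \pi(v+w, v-v) = 0$, and the initial datum $d|_{t=0} = \nabla\cdot v|_{t=0} = 0$ (recall $v|_{t=0}=0$ and $\nabla\cdot u_0 = 0$). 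The Neumann heat problem with zero data has only the trivial solution, so $d \equiv 0$. Together with $\nabla\cdot w = 0$ from \eqref{in-stokes}, this gives $\nabla\cdot u = 0$, and the boundary conditions $u^\bot|_\Gamma = 0$, $\omega^\Vert|_\Gamma = a$ follow by adding those of $v$ and $w$. Continuity of $t\mapsto \|v(t,\cdot)\|_{H^2}^2 + \|v_t(t,\cdot)\|_{H^1}^2$ is a consequence of the analytic semigroup framework of Section 5. Uniqueness of $u$ within the class (i)--(iv) follows by applying the same energy argument to the difference of two solutions.

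The main obstacle I anticipate is the recovery of incompressibility at the fixed point: it hinges on the precise matching between the nonlinear source in \eqref{ps-01} and the boundary identity \eqref{t29-22}, which collapse simultaneously when $\beta$ is replaced by $v$. Secondarily, the contraction estimate requires rerunning the full a priori argument of Theorem \ref{them01} on the difference system while tracking that every bound is linear in $\tilde\beta$ and carries a factor of $\sqrt{T}$; keeping the pressure differences $p_{\beta_1}-p_{\beta_2}$ under control via the Solonnikov estimate \eqref{es29-04} is the delicate part of that bookkeeping.
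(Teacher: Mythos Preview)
Your proposal is correct and follows essentially the same route as the paper: a Banach fixed-point argument for $V$ on a ball in $\mathbf{W}_T$ using \eqref{s31-31} for invariance and a Lipschitz estimate of the same form as your $\Phi$-bound (the paper's \eqref{lip-01-01}) for contractivity, followed by the recovery of incompressibility via the homogeneous Neumann heat equation for $d=\nabla\cdot v$. Your verification of $\partial_\nu d|_\Gamma=0$ by directly specializing \eqref{t29-22} at $\beta=v$ is in fact a bit cleaner than the paper's, which rederives the same boundary value from the equation and Lemma~\ref{lems2}; otherwise the arguments coincide.
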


\begin{proof}
{\it Step 1}. If $\beta_{1}(t,x)$ and $\beta_{2}(t,x)$, $0\le t\le
T$, are two vector fields which satisfy conditions (i)-(iv) in
Theorem \ref{them01}, then $U:=v_{1}-v_{2}$ with
$v_{k}=V(\beta_{k}), k=1,2,$ satisfies the linear parabolic
equations:
\begin{equation*}
\partial_tU=\Delta U-(\beta_{1}+w)\cdot\nabla U
-\nabla P -(\beta_{1}-\beta_{2})\cdot\nabla (v_{2}+w), \qquad
U|_{t=0}=0,
\end{equation*}
subject to $\left. U^{\bot}\right|_{\Gamma}=0$ and $\left. \left(
\nabla \times U\right)^{\Vert}\right|_{\Gamma}=0$, where
$P=p_{\beta_{1}}-p_{\beta_{2}}$ is the unique solution to
\begin{equation*}
\Delta P=-\nabla\cdot\big( (\beta_{1}+w)\cdot\nabla(\beta_{1}+w)
-(\beta_{2}+w)\cdot\nabla(\beta_{2}+w)\big)
\end{equation*}
such that
\begin{equation*}
\left. \partial_\nu P\right|_{\Gamma} =\pi\left(\beta_{1}+w,
\beta_{1}+w\right) -\pi \left(\beta_{2}+w, \beta_{2}+w\right).
\end{equation*}
Applying the same arguments used in the proof of Theorem
\ref{them01}, we obtain that there exists $C>0$ depending only on
the domain $\Omega$ such that the following inequality holds:
\begin{eqnarray}
&&\|V(\beta_{1})-V(\beta_{2})\|_{\mathbf{W}_{T}} \notag \\
&&\leq C\sqrt{T}e^{C(1+\|(\beta_{1}, \beta_{2},
w)\|_{\mathbf{W}_{T}}^{2}) T}
    \big(1+\|(V(\beta_{1}), \beta_{1}, \beta_{2},
w)\|_{\mathbf{W}_{T}}\big)\|\beta_{1}-\beta_{2}\|_{\mathbf{W}_{T}}.\quad
\label{lip-01-01}
\end{eqnarray}
It follows from (\ref{s31-31})--(\ref{lip-01-01}) that, for every
$K>\sqrt{CC_{0}}$, there exists $T>0$ depending only $K$, $C$, and
$\|w\|_{\mathbf{N}}$ such that, if $v\in \mathbf{W}_{T}$ with
$\|v\|_{\mathbf{W}_{T}}\leq K$, then $\|V(v)\|_{\mathbf{W}_{T}}\leq
K$ and
\begin{equation*}
\|V(v_1)-V(v_1)\|_{\mathbf{W}_{T}}\leq
\frac{2}{3}\|v_1-v_2\|_{\mathbf{W}_{T}}
\end{equation*}
for any $v_1,v_2\in \mathbf{W}_{T}$ such that $v_1(0,x)=v_2(0,x)=0$,
$\|v_i\|_{\mathbf{W}_{T}}\leq K$, $i=1,2$. Therefore, there is a
unique fixed point $v\in \mathbf{W}_{T}$ such that $V(v)=v$. Then
$u=v+w$, according to Theorem \ref{them01},
a solution of the nonhomogeneous initial-boundary value problem
\eqref{1.1}--\eqref{1.3} and
\eqref{kinematic-1}--\eqref{vorticity-1}  for the Navier-Stokes
equations.

\medskip
{\it Step 2. Incompressibility}:
If $v(t,\cdot ), 0\le t\le T$, is a fixed point of the velocity map
$V$, then $v$ is a solution to
\begin{equation}
\nabla\cdot v=0. \label{nss01}
\end{equation}

Setting $\beta =v$ in \eqref{ps-01}--\eqref{ps-02}
and taking divergence to obtain
\begin{equation*}
\partial_t (\nabla\cdot v)+\nabla\cdot\big((v+w)\cdot\nabla (v+w)\big)
=\Delta(\nabla\cdot v)-\Delta p_{v}
\end{equation*}
so that $d=\nabla\cdot v$ solves the heat equation:
\begin{equation*}
\partial_t d=\Delta d, \qquad d(0,\cdot)=0.
\end{equation*}
According to our assumptions:
\begin{equation}
\partial_t v+\left(v+w\right)\cdot\nabla \left(v+w\right) =\Delta
v-\nabla p_{v}.  \label{nssde1}
\end{equation}
Identifying the normal part of each term of equation (\ref{nssde1}),
together with the boundary condition: $\left.
v^{\bot}\right|_{\Gamma}=\left. w^{\bot}\right|_{\Gamma}=0$, we
conclude
\begin{eqnarray*}
\left(\Delta v\right)^{\bot}=\left(\nabla p_{v}\right)^{\bot}
 +\left(\left(v+w\right)\cdot\nabla \left(v+w\right)\right)^{\bot}
=0.
\end{eqnarray*}
On the other hand, by Lemma \ref{lems2}, together with the boundary
condition: $\left. \omega^{\Vert}\right|_{\Gamma}=0$, we have
\begin{equation*}
\left. \partial_\nu d\right|_{\Gamma} =\langle \Delta v,
\mathbf{\nu}\rangle =0.
\end{equation*}
By the uniqueness of the Neumann problem, $d(t,\cdot)=0$ for all
$t$.
\end{proof}

\section{Inviscid Limit in $\Omega\subset\R^n, n\ge 3$}

In this section we study the inviscid limit of the solutions to the
nonhomogeneous initial-boundary value problem
\eqref{1.1}--\eqref{1.3} and
\eqref{kinematic-1}--\eqref{vorticity-1}  for the Navier-Stokes
equations.

Let $u_{\mu}$ be the solution to the initial-boundary value problems
\eqref{1.1}--\eqref{1.3} and
\eqref{kinematic-1}--\eqref{vorticity-1} for the Navier-Stokes
equations for each $\mu>0$. Let $u$ be the solution to the
initial-boundary value problem \eqref{1.4}--\eqref{kinematic-1} for
the Euler equations.
Notice that all solutions $u_{\mu}$ subject to the same boundary
conditions: $\left. u_{\mu}^{\bot}\right|_{\Gamma}=0$ and $\left.
(\nabla\times u_{\mu})^{\Vert}\right|_{\Gamma}=\nabla^{\Gamma}\times
a$ for the given smooth vector field $a$,
while
the solution $u$ satisfies only the kinematic condition $\left.
u^{\bot}\right|_{\Gamma}=0$ which is independent of the viscosity
constant $\mu$.

\begin{theorem}
Let $a\in L^2([0,T]; L^2(\Gamma))$ be a smooth vector field on
$\Gamma$. Suppose that, for all $\mu\in (0, \mu_{0}]$, a unique
strong solution $u_{\mu}$ of problem \eqref{1.1}--\eqref{1.3} and
\eqref{kinematic-1}--\eqref{vorticity-1}
and  a unique strong solution $u\in H^2(\Omega)$ of  problem
\eqref{1.4}--\eqref{kinematic-1}
exist up to time $T^{\ast}>0$. Then there exists $C>0$ depending on
$\mu_0, T, \|a\|_{L^2([0,T]; L^2(\Gamma)}$, and $\|u\|_{H^2\cap
W^{1,\infty}(\Omega)}$, independent of $\mu$, such that, for any
$T\in [0, T^{\ast}]$,
\begin{equation}
\sup_{0\le t\leq T}\|u_{\mu}(t,\cdot )-u(t,\cdot)\|_{2}\le
C(T)\mu\rightarrow 0\text{ \ \ as }\mu \downarrow 0, \label{th01}
\end{equation}
and
\begin{equation*}
\int_{0}^{T}||\nabla \left( u_{\mu }-u\right) (s,\cdot
)||_{2}^{2}ds\leq C.
\end{equation*}
It follows that the solutions $u^\mu$ of problem
\eqref{1.1}--\eqref{1.3} and
\eqref{kinematic-1}--\eqref{vorticity-1} for the Navier-Stokes
equations converge to the unique solution $u(t,x)$ of problem
\eqref{1.4}--\eqref{kinematic-1} in $L^\infty([0,T]; L^2(\Omega))$.
\end{theorem}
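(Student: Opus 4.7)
The plan is to carry out a direct $L^{2}$ energy estimate on the difference $v := u_\mu - u$. Subtracting \eqref{1.4}--\eqref{1.5} from \eqref{1.1}--\eqref{1.2} and rewriting $u_\mu\cdot\nabla u_\mu - u\cdot\nabla u = u_\mu\cdot\nabla v + v\cdot\nabla u$ yields
\begin{equation*}
\partial_t v + u_\mu\cdot\nabla v + v\cdot\nabla u + \nabla(p_\mu - p) = \mu\Delta u_\mu, \qquad \nabla\cdot v = 0,
\end{equation*}
with $v|_{t=0} = 0$ and $v^\bot|_\Gamma = 0$ (the two solutions share the kinematic condition and the initial datum $u_0$). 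The strategy is to test this equation against $v$, exploit the standard cancellations (transport $+$ pressure), and then use the inhomogeneous vorticity boundary condition $\omega_\mu^\Vert|_\Gamma = a$ and the integration-by-parts identity \eqref{s28-08} to extract the good dissipation from the viscous term.

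Pairing with $v$, the term $\int_\Omega (u_\mu\cdot\nabla v)\cdot v\,dx$ vanishes by $\nabla\cdot u_\mu = 0$ and $u_\mu^\bot|_\Gamma = 0$, and $\int_\Omega \nabla(p_\mu - p)\cdot v\,dx$ vanishes by $\nabla\cdot v = 0$ and $v^\bot|_\Gamma = 0$. What remains is
\begin{equation*}
\tfrac{1}{2}\tfrac{d}{dt}\|v\|_{2}^{2} + \int_\Omega (v\cdot\nabla u)\cdot v\,dx = \mu\int_\Omega \Delta u_\mu\cdot v\,dx,
\end{equation*}
whose left-hand quadratic term is bounded by $\|\nabla u\|_\infty\|v\|_{2}^{2}$. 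For the right-hand side, use $\Delta u_\mu = -\nabla\times\omega_\mu$ (by incompressibility) and \eqref{s28-08}. Writing $\nabla\times v = \omega_\mu - \omega$, the volume contribution produces the dissipative term $-\mu\|\omega_\mu\|_{2}^{2}$ together with a cross term $\mu\int\omega_\mu\cdot\omega\,dx$ that Young's inequality absorbs into half of the dissipation plus $\mu\|\omega\|_{2}^{2}$. The boundary contribution is $-\mu\int_\Gamma(\omega_\mu\times v)\cdot\nu\,dS$, which, using $v^\bot|_\Gamma = 0$ and $\omega_\mu^\Vert|_\Gamma = a$, reduces to $-\mu\int_\Gamma(a\times v^\Vert)\cdot\nu\,dS$; this is controlled by Cauchy--Schwarz, the trace inequality \eqref{3.8a}, and Lemma \ref{ths28-01} applied to $v$ (to dominate $\|v\|_{L^{2}(\Gamma)}$ by $\|\nabla\times v\|_{2} + \|v\|_{2}$), with the absorption parameter chosen so that a fixed fraction of $\mu\|\omega_\mu\|_{2}^{2}$ survives on the left.

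Combining these bounds gives a differential inequality of the schematic form
\begin{equation*}
\tfrac{d}{dt}\|v\|_{2}^{2} + \mu\|\omega_\mu\|_{2}^{2} \le C\|v\|_{2}^{2} + C\mu\bigl(\|\omega\|_{2}^{2} + \|a\|_{L^{2}(\Gamma)}^{2}\bigr),
\end{equation*}
with $C$ depending on $\Omega$ and $\|u\|_{W^{1,\infty}}$. Since $v(0) = 0$, Gronwall immediately delivers $\sup_{0\le t\le T}\|v\|_{2}^{2}\le C\mu$ and the integrated bound $\mu\int_{0}^{T}\|\omega_\mu\|_{2}^{2}\,ds\le C$; combining the latter with the triangle inequality against the (assumed uniformly bounded) $\nabla u$ and with Lemma \ref{ths28-01} produces the claimed $L^{2}_{t}H^{1}_{x}$ estimate on $\nabla v$. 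The main obstacle will be sharpening the rate: the scheme above naturally yields $\|v\|_{2} = O(\sqrt{\mu})$, whereas the statement asserts the linear rate $C(T)\mu$. Closing this gap will likely require constructing an $O(\mu)$ boundary-layer corrector $\phi_\mu$ (supported near $\Gamma$ and adjusted so that $\omega_{\phi_\mu}^{\Vert}|_\Gamma = a$) and running the energy estimate on $v - \mu\phi_\mu$, so that the offending boundary integral becomes $O(\mu^{2})$; this exploits the compatibility between the Navier-Stokes pressure Neumann data \eqref{eq-a2-12-10-07} and its Euler counterpart \eqref{1.7}, which differ exactly by the $O(\mu)$ quantity $-\mu\nabla^{\Gamma}\times a$.
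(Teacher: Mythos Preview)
Your energy argument is essentially the one in the paper. The only cosmetic difference is that the paper splits the viscous term as $\mu\Delta v_\mu + \mu\Delta u$ rather than keeping $\mu\Delta u_\mu$ whole; after integration by parts this puts the boundary integral in the form $2\mu\int_\Gamma\langle v_\mu\times b,\nu\rangle\,dS$ with $b=a-\nabla\times u$, and the extra bulk term $2\mu\int_\Omega\langle\Delta u,v_\mu\rangle\,dx$ is estimated crudely by $2\mu\|\Delta u\|_2\|v_\mu\|_2$. Your version with the cross term $\mu\int\omega_\mu\cdot\omega$ is equivalent. In both organizations the trace inequality absorbs the $\mu\|v_\mu\|_{L^2(\Gamma)}^2$ contribution into the dissipation, yielding exactly the differential inequality~\eqref{p-02}.

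On the rate: your observation is correct, and it matches what the paper actually proves. The Gronwall step in the paper ends at \eqref{p-04}, which reads $\|v_\mu(t,\cdot)\|_2^2\le C\mu$, i.e.\ $\|v_\mu\|_2=O(\sqrt{\mu})$; no corrector is introduced, and nothing further is done to upgrade the rate. The linear bound $\|u_\mu-u\|_2\le C(T)\mu$ in the statement appears to be a misprint for the squared quantity. So you need not build the $O(\mu)$ boundary-layer corrector you sketch at the end: the energy estimate you already have reproduces the paper's proof in full, and the apparent gap is in the statement, not in your argument.
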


\begin{proof}
Let $v_{\mu }=u_{\mu }-u$. Then $v_{\mu }$ satisfies the following
equations:
\begin{equation}
\left\{
\begin{array}{ll}
{\partial_t}v_{\mu} =\mu \Delta
v_{\mu}-\left(v_{\mu}+u\right)\cdot\nabla v_{\mu} -\nabla
P_{\mu}-v_{\mu}\cdot\nabla u+\mu\Delta u, \\
\nabla\cdot v_{\mu}=0\text{,}
\end{array}
\right.  \label{0-c-1}
\end{equation}
and the initial condition:
\begin{equation}\label{initial}
v_{\mu}(0,\cdot)=0,
\end{equation}
where $P_{\mu}=p_{\mu}-p$.

Since both $u_{\mu}$ and $u$ satisfy the kinematic condition
\eqref{kinematic-1}, so does $v_{\mu}$. Thus, by means of the energy
method, we obtain
\begin{eqnarray*}
\frac{d}{dt}\|v_{\mu}\|_{2}^{2} &=&2\mu
\int_{\Omega}v_{\mu}\cdot\Delta v_{\mu}\,dx
-\int_{\Omega}\left(v_{\mu}+u\right)\cdot\nabla
\big(|v_{\mu}|^{2}\big)\,dx
-2\int_{\Omega}\langle \nabla P_{\mu}, v_{\mu}\rangle\,dx \\
&&-2\int_{\Omega}\langle v_{\mu}\cdot\nabla u, v_{\mu}\rangle\, dx
+2\mu\int_{\Omega}\langle \Delta u, v_{\mu}\rangle\, dx.
\end{eqnarray*}
Integrating by parts in the first three integrals, one then deduces
that
\begin{eqnarray*}
\frac{d}{dt}\|v_{\mu}\|_{2}^{2} &=&-2\mu \int_{\Omega}|\nabla\times
v_{\mu}|^{2}\,dx +2\mu\int_{\Gamma}\langle v_{\mu}\times
\left(\nabla\times v_{\mu}\right), \nu\rangle \,dS\\
&&-2\int_{\Omega}\langle v_{\mu}\cdot\nabla u, v_{\mu}\rangle\,dx
+2\mu\int_{\Omega}\langle\Delta u, v_{\mu}\rangle\,dx \\
&=&-2\mu\|\nabla v_{\mu}\|_{2}^{2}-2\mu \int_{\Gamma}\pi(v_{\mu},
v_{\mu})\,dS+2\mu \int_{\Gamma}\langle v_{\mu}\times b, \nu\rangle\,dS \\
&&-2\int_{\Omega}\langle v_{\mu}\cdot\nabla u, v_{\mu}\rangle\,dx
+2\mu\int_{\Omega}\langle \Delta u, v_{\mu}\rangle\,dx,
\end{eqnarray*}
where $b=a-\nabla\times u$. Furthermore, we  use the following
estimate:
\begin{eqnarray*}
\int_{\Gamma}\langle v_{\mu}\times b,\nu \rangle\,dS \leq
\|b\|_{L^{2}(\Gamma)}\|v_{\mu}\|_{L^{2}(\Gamma)} \le
C\big(\|v_{\mu}\|_{L^{2}(\Gamma)}^{2}+\|a-\nabla\times
u\|_{L^{2}(\Gamma)}^{2}\big)
\end{eqnarray*}
to obtain
\begin{eqnarray}
\frac{d}{dt}\|v_{\mu}\|_{2}^{2} &\leq& -2\mu\|\nabla
v_{\mu}\|_{2}^{2}+2\mu C\left( \|v_{\mu}\|_{L^{2}(\Gamma)}^{2}
+\|a-\nabla\times u\|_{L^{2}(\Gamma)}^{2}\right)  \notag \\
&&+2\|\nabla u\|_{\infty}\|v_{\mu}\|_{2}^{2}+2\mu \|\Delta
u\|_{2}\|v_{\mu}\|_{2}\text{ .}  \label{p-01}
\end{eqnarray}
Finally, we use the Sobolev trace theorem:
\begin{equation*}
2C\|v_\mu\|^2_{L^2(\Gamma)} \le \|\nabla v_\mu\|_2^2
+\tilde{C}\|v_\mu\|_2^2, \qquad \|\nabla \times
u\|_{L^{2}(\Gamma)}^{2}\leq \|u\|^2_{H^2(\Omega)}
\end{equation*}
to establish the differential inequality:
\begin{equation}
\frac{d}{dt}\|v_{\mu}\|_{2}^{2}+\mu \|\nabla v_{\mu}\|_{2}^{2} \le
C\left((\|\nabla u\|_{\infty}+\mu_0) \|v_{\mu}\|_{2}^{2}
+\mu ( \|a\|_{L^{2}(\Gamma)}^{2}+\|u\|_{H^2(\Omega)}^{2})\right)
\text{.} \label{p-02}
\end{equation}
The Gronwall inequality yields that, for $t\in [0, T^*]$,
\begin{equation}
\|v_{\mu}(t, \cdot)\|_{2}^{2} \leq
\mu C\int_{0}^{t}e^{C\int_{s}^{t}(\|\nabla
u(\tau,\cdot)\|_{\infty}+\mu (t-s))}\big(
\|b\|_{L^{2}(\Gamma)}^{2}+\|u(s,\cdot)\|_{H^2(\Omega)}^{2} \big)
ds:=C\mu \text{.} \label{p-04}
\end{equation}
Hence
\begin{equation*}
\int_{0}^{t}\|\nabla v_{\mu}(s, \cdot)\|_{2}^{2}ds\leq
C,
\end{equation*}
which imply the conclusions of the theorem.
\end{proof}

In order to ensure the convergence of $u_{\mu}$ to $u$ in the strong
sense (say, in $H^{2}(\Omega)$) up to the boundary, a necessary
condition is that $u$ must match with the boundary data $\left.
(\nabla \times u)^{\bot}\right|_{\Gamma}=a$.

\bigskip
\medskip \noindent
{\bf Acknowledgments.} Gui-Qiang Chen's research was supported in
part by the National Science Foundation under Grants DMS-0807551,
DMS-0720925, and DMS-0505473, and the Natural Science Foundation of
China under Grant NSFC-10728101. Zhongmin Qian's research was
supported in part by EPSRC grant EP/F029578/1. This paper was
written as part of  the international research program on Nonlinear
Partial Differential Equations at the Centre for Advanced Study at
the Norwegian Academy of Science and Letters in Oslo during the
academic year 2008--09.

\end{document}